\newcommand{\leqdr}{\mathbin{\rotatebox[origin=c]{-45}{$\leq$}}}
\newcommand{\lequr}{\mathbin{\rotatebox[origin=c]{45}{$\leq$}}}
\theoremstyle{plain}
\theoremstyle
{plain}
\newtheorem{theorem}{Theorem}[section]
\newtheorem{proposition}[theorem]{Proposition}
\newtheorem{lemma}[theorem]{Lemma}
\newtheorem{corollary}[theorem]{Corollary}
\newtheorem{question}[theorem]{Question}
\newtheorem{claim}[theorem]{Claim}
\theoremstyle{definition}
\newtheorem{definition}[theorem]{Definition}
\newtheorem{example}[theorem]{Example}
\newtheorem{remark}[theorem]{Remark}
\newtheorem{theoremintro}{Theorem}
\newcommand{\N}{\mathbb{N}}
\newcommand{\Q}{\mathbb{Q}}
\newcommand{\R}{\mathbb{R}}
\newcommand{\Addresses}{{
		\bigskip
		\footnotesize
		
		\noindent N.~Zava,  \textsc{Institute of Science and Technology Austria (ISTA), 3400 Klosterneuburg, Austria}\\
		\textit{E-mail address}: \texttt{nicolo.zava@gmail.com}
}}
\newcommand{\EH}{d_{EH}}
\newcommand{\GH}{d_{GH}}
\newcommand{\GHs}{\mathcal{GH}}
\newcommand{\EHs}{\mathcal{EH}}
\DeclareMathOperator{\diam}{diam}
\DeclareMathOperator{\asdim}{asdim}
\DeclareMathOperator{\Iso}{Isom}
\DeclareMathOperator{\dist}{dist}
\DeclareMathOperator{\dis}{dis}
\author{ 
Nicol\`o Zava
}
\title{Coarse and bi-Lipschitz embeddability of subspaces of the Gromov-Hausdorff space into Hilbert spaces}
\date{}
\begin{document}
	\maketitle

\begin{abstract}
In this paper, we discuss the embeddability of subspaces of the Gromov-Hausdorff space, which consists of isometry classes of compact metric spaces endowed with the Gromov-Hausdorff distance, into Hilbert spaces. These embeddings are particularly valuable for applications to topological data analysis.
We prove that its subspace consisting of metric spaces with at most n points has asymptotic dimension $n(n-1)/2$. Thus, there exists a coarse embedding of that space into a Hilbert space. 
On the contrary, if the number of points is not bounded, then the subspace cannot be coarsely embedded into any uniformly convex Banach space and so, in particular, into any Hilbert space.
Furthermore, we prove that, even if we restrict to finite metric spaces whose diameter is bounded by some constant, the subspace still cannot be bi-Lipschitz embedded into any finite-dimensional Hilbert space. 
We obtain both non-embeddability results by finding 
obstructions to coarse and bi-Lipschitz embeddings in families of isometry classes of finite subsets of the real line endowed with the Euclidean-Hausdorff distance.
\end{abstract}
	

{\footnotesize
\noindent MSC2020: 51F30, 
46B85, 
54B20. 
}

 \footnotesize
{
\noindent Keywords: {Gromov-Hausdorff distance, Euclidean-Hausdorff distance, stable invariants, asymptotic dimension, coarse embeddings, Assouad dimension, bi-Lipschitz embeddings.}

\normalsize 
	
\section{Introduction}

The  Gromov-Hausdorff distance $\GH$ measures how two metric spaces resemble each other. It was introduced by Edwards in  \cite{Edw}, and then rediscovered and generalised by Gromov (\cite{Gro_GH_fr
	})
	. Until around 2000
	, the Gromov-Hausdorff distance had been mainly used by pure mathematicians who were interested in the induced topology. That direction is still of great interest, and, as an example, we mention the two recent papers \cite{Ant1,Ant2}.
	
	In addition to the intrinsic interest in it, a 
 great impulse to study the quantitative aspects of the Gromov-Hausdorff distance came from its applications in topological data analysis, which is a fast-growing subject aiming to use topological techniques to analyse a wide range of real-world data 
 (see, for example, \cite{Car,Hes}, and to \cite{GiuLaz} for a growing dataset of real-world applications).   The Gromov-Hausdorff distance provides a theoretical framework to directly compare point clouds by considering them as metric spaces. This approach  
 proved to be 
		useful 
		in shape recognition and comparison (\cite{MemSap1,MemSap2,Mem07}), which arises
		, for example, in 
		molecular biology, databases of objects, face recognition and matching of articulated objects. 

Comparing two metric spaces using the Gromov-Hausdorff distance directly is computationally expensive. 
Even approximating it within a factor of $3$ for trees with unit edge length is NP-hard (\cite{AgaFoxNatSidWan,Sch}, see also \cite{Mem07}, where the author discussed the connection between computing the Gromov-Hausdorff distance and a class of NP-hard problems). Therefore, creating efficiently computable invariants to approximate the Gromov-Hausdorff distance is of particular interest. Following \cite{Mem12}, an {\em invariant} $\psi$ associates to a metric space $X$ an element $\psi(X)$ of another metric space $(\mathfrak Y,d_{\mathfrak Y})$ in such a way that, if $X$ and $Y$ are two isometric metric spaces, then $\psi(X)=\psi(Y)$. Furthermore, an invariant $\psi$ is {\em stable} if  there exists a function $\rho_+\colon\R_{\geq 0}\to\R_{\geq 0}$  such that 
\begin{equation}\label{eq:borno}d_{\mathfrak Y}(\psi(X),\psi(Y))\leq\rho_+(\GH(X,Y))\end{equation}
for every pair of metric spaces $X$ and $Y$. Stability implies that small perturbations of the metric spaces have a limited effect on the associated invariants. Therefore, considering similarity recognition, we avoid false negatives
, which are situations where two metric spaces are very similar in the Gromov-Hausdorff distance, but their invariants are far apart. Furthermore, stable invariants can be used to provide lower bounds to the Gromov-Hausdorff distance as shown in \cite{Mem12}. 
We refer to the latter paper for a wide range of stable invariants. Additional examples are 
hierarchical clustering (\cite{CarMem1,CarMem2}) and persistence diagrams induced by the Vietoris-Rips, the Dowker, and the \v{C}ech filtrations (\cite{Cha_al,Cha_al2}, see also \cite{EdeHar} for details and applications of persistent homology).

In contrast to false negatives, even though still undesirable, it is often acceptable when an invariant produces false positives, where two dissimilar spaces are mapped to close values
. In this paper, we study when stable invariants are actually bound to lose information because of the unavoidable creation of false positives. We focus our study on those invariants taking values in a Hilbert space. Those are particularly relevant for the applications in machine learning pipelines since many algorithms expect either data in the form of Euclidean vectors or at least access to a so-called feature map into a Hilbert space. 
As formally stated in the sequel, we prove that the existence of a stable invariant avoiding false positives strongly depends on a bound on the cardinality of the metric spaces.

Our approach to the problem requires notions and techniques from coarse geometry. Intuitively, this field, also known as large-scale geometry, focuses on large-scale, global properties of spaces ignoring local features. We refer to \cite{Roe,NowYu} for a wide introduction. 
A map $\psi\colon (X,d_X)\to (Y,d_Y)$ between two metric spaces is said to be 
a {\em coarse embedding} if there exist two maps $\rho_-,\rho_+\colon\R_{\geq 0}\to\R_{\geq 0}$ such that $\rho_-\to\infty$ and, for every $x,y\in X$,
	\begin{equation}\label{eq:rho-_rho+}\rho_-(d_X(x,y))\leq d_Y(\psi(x),\psi(y))\leq\rho_+(d_X(x,y)).\end{equation} 
In the case of stable invariants, i.e., satisfying \eqref{eq:borno}, a lower bound as in \eqref{eq:rho-_rho+} prevents false positives since the larger the Gromov-Hausdorff distance, the larger the distance between the two associated invariants.

Coarse embeddings have been introduced by Gromov and extensively studied in coarse geometry. 
A crucial application of this theory is due to Yu, who proved in \cite{Yu_CE} that those metric spaces that can be coarsely embedded into a Hilbert space satisfy the Novikov and the coarse Baum-Connes conjectures generalising results contained in \cite{Yu_asdim}. This result motivated two research directions. On one hand, since an explicit coarse embedding can be hard to construct, a plethora of conditions ensuring its existence have been defined and investigated. We refer the interested reader to \cite{NowYu} for a discussion on the topic and to \cite{WeiYamZav} for more examples. Among these properties, if a space has finite asymptotic dimension, then it can be coarsely embedded into a Hilbert space (\cite{Yu_CE,HigRoe}). Asymptotic dimension is a large-scale counterpart of Lebesgue's covering dimension introduced in \cite{Gro_asdim} (see also \cite{BelDra}). On the other hand, examples of metric spaces that cannot be coarsely embedded were constructed 
for example in \cite{DraGonLafYu,Laf}
. 
Showing that one of those pathological examples can be coarsely embedded into a metric space $X$ is a technique to prove that $X$ itself cannot be coarsely embedded into any Hilbert space.

Those two strategies have been adopted to prove if metric spaces emerging in different fields can be coarsely embedded into Hilbert spaces. In topological data analysis, collections of persistence diagrams endowed with various metrics represent a prominent example. 
It was proved in \cite{MitVir} that the space of persistence diagrams of at most $n$ points endowed with the Hausdorff distance has finite asymptotic dimension, and so it can be coarsely embedded into a Hilbert space. This result, despite being non-constructive, motivated further research in that direction that eventually led to explicit bi-Lipschitz and coarse embeddings in \cite{BatGar} and \cite{MitVir_ce}, respectively. In the opposite direction, it was proved in \cite{BubWag,Wag,MitVir} that spaces of persistence diagrams with various metrics cannot be coarsely embedded into any Hilbert space. We also refer to \cite{PriWei}, where the authors showed the equivalence of this problem with the embeddability of Wasserstein spaces. 

Another example can be found in \cite{GarVirZav}. Motivated by the interest in invariants in crystallography and pharmaceutics (see \cite{EdeHeiKurSmiWin}), 
the authors used the previously described strategy to prove that 
spaces of periodic point sets equipped with the Euclidean bottleneck distance cannot be coarsely embedded into any uniformly convex Banach space. The constructions used in that paper are adapted from those developed in \cite{WeiYamZav} to prove the analogous non-coarse embeddability results for families of finite subsets of metric spaces endowed with the Hausdorff distance.

In this paper, we prove the following results.
\begin{theoremintro}\label{theo:theoA}
The space $\mathcal{GH}^{\leq n}$ of isometry classes of metric spaces with at most $n$ points endowed with the Gromov-Hausdorff distance has asymptotic dimension $n(n-1)/2$, and so it can be coarsely embedded into a Hilbert space.
\end{theoremintro}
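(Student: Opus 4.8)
The plan is to transfer the whole computation to the space of distance matrices. A metric space with exactly $m$ points, once its points are labelled, is the same as its distance matrix, i.e.\ a point of the open convex cone $\mathcal{M}_m^{\circ}\subseteq\R^{\binom{m}{2}}$ of symmetric arrays $(d_{ij})_{i<j}$ with $d_{ij}>0$ satisfying all triangle inequalities; forgetting the labelling amounts to passing to the quotient $\mathcal{M}_m^{\circ}/S_m$ under the coordinate-permutation action of the symmetric group. The heart of the matter is a comparison lemma: on the stratum $\GHs^{=m}$ of isometry classes of exactly-$m$-point spaces, the Gromov--Hausdorff distance $\GH$ is bi-Lipschitz equivalent, with constants depending only on $m$, to the $S_m$-invariant $\ell^\infty$-distance $d^{S_m}_\infty([D_X],[D_Y])=\min_{\sigma\in S_m}\max_{i<j}|d_X(i,j)-d_Y(\sigma(i),\sigma(j))|$. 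One half is immediate, since a bijection realising that minimum is in particular a correspondence, whence $\GH\le\tfrac12\, d^{S_m}_\infty$; the reverse inequality is the technical core, discussed below.

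Granting the lemma, I would obtain the upper bound as follows. For each $m\le n$, the identification above together with the comparison shows that sending a class to the permutation-class of its distance matrix is a coarse embedding of $\GHs^{=m}$ into $\mathcal{M}_m^{\circ}/S_m$. Since $\mathcal{M}_m^{\circ}$ is a full-dimensional convex subset of $\R^{\binom{m}{2}}$ it has asymptotic dimension $\binom{m}{2}$, and quotienting by the finite group $S_m$ leaves this unchanged, so $\asdim\GHs^{=m}\le\binom{m}{2}$. Writing $\GHs^{\le n}=\bigcup_{m=1}^{n}\GHs^{=m}$ as a finite union of subspaces and applying the finite union theorem for asymptotic dimension then gives $\asdim\GHs^{\le n}\le\max_{m\le n}\binom{m}{2}=\binom{n}{2}$.

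For the lower bound I would exhibit a coarsely embedded copy of a space of asymptotic dimension $\binom{n}{2}$. Fix $R>0$ and let $\mathcal F_R$ be the family of $n$-point spaces all of whose distances lie in $[R,\tfrac32 R]$; every such array automatically satisfies the triangle inequality, so $\mathcal F_R$ is a genuine cube $[R,\tfrac32 R]^{\binom{n}{2}}$ of metrics. On $\mathcal F_R$ the comparison is exact and elementary: if a correspondence between two members had distortion $<R$, its fibres would have diameter $<R\le$ the minimal distance, hence be singletons, forcing the correspondence to be a bijection; consequently $\GH=\tfrac12\, d^{S_n}_\infty$ there. Choosing radii $R_k$ with $R_{k+1}\ge 2R_k$ makes the cubes $\mathcal F_{R_k}$ pairwise coarsely disjoint (their diameters, and hence their Gromov--Hausdorff distances, separate them), so $\GHs^{\le n}$ coarsely contains the disjoint union of the quotient cubes $[R_k,\tfrac32 R_k]^{\binom{n}{2}}/S_n$ of side tending to infinity, a space of asymptotic dimension $\binom{n}{2}$. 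Monotonicity of $\asdim$ yields $\asdim\GHs^{\le n}\ge\binom{n}{2}$, and together with the upper bound this gives the claimed equality.

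The main obstacle I expect is the non-trivial half of the comparison lemma, the inequality $d^{S_m}_\infty\le C_m\,\GH$, because an optimal Gromov--Hausdorff correspondence need neither be nor contain a bijection. I would address it by observing that a correspondence of distortion $2\delta$ has all fibres of diameter at most $2\delta$: clustering both point sets at scale $\delta$ then produces a bijection between clusters, and resolving this into a bijection of the full point sets costs only a factor controlled by $m$. Finally, the ``and so it can be coarsely embedded into a Hilbert space'' clause is immediate from the results recalled in the introduction: a space of finite asymptotic dimension admits a coarse embedding into a Hilbert space, and $\GHs^{\le n}$ has asymptotic dimension $\binom{n}{2}<\infty$.
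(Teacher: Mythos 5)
Your upper bound rests on a comparison lemma that is false, even coarsely, once $m\geq 4$. Take a fixed $s>0$ and, for $t\to\infty$, the four-point spaces $X_t=\{0,s,t,3t\}$ and $Y_t=\{0,t,t+s,3t\}\subseteq\R$. The correspondence pairing both $0$ and $s$ with $0$, pairing $t$ with both $t$ and $t+s$, and $3t$ with $3t$ has distortion $2s$, so $\GH(X_t,Y_t)\leq s$ for all $t$. On the other hand, the distances of $Y_t$ are $\{s,t,t+s,2t,2t-s,3t\}$, so any bijection with distortion $<t-s$ must send the unique pair at distance $s$ in $X_t$, namely $\{0,s\}$, onto the unique pair at distance $s$ in $Y_t$, namely $\{t,t+s\}$, and hence $\{t,3t\}$ onto $\{0,3t\}$; but then the pair $\{0,3t\}$ of $X_t$, at distance $3t$, is sent to a pair at distance in $\{t,t+s,2t,2t-s\}$, giving distortion $\geq t$. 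Thus $d^{S_4}_\infty(X_t,Y_t)\geq t-s\to\infty$ while $\GH(X_t,Y_t)$ stays bounded: the map from $\GHs^{=m}$ to distance matrices modulo $S_m$ is not even bornologous, let alone a coarse embedding, and no constant $C_m$ (nor any control function $\rho_+$) can exist. Your proposed repair fails on exactly this example: at scale $\sim s$ the clusters of $X_t$ have cardinalities $(2,1,1)$ at positions $0,t,3t$ while those of $Y_t$ have cardinalities $(1,2,1)$, so the bijection between clusters matches clusters of different sizes, and any resolution into a bijection of points must move some point a distance $\sim t$, not $C_m\delta$. The obstruction is structural: Gromov--Hausdorff correspondences may merge and split clusters with different multiplicities, and multiplicity is precisely what cannot be recovered at large scale. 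This is why the paper does not attempt a coarse embedding into a finite-dimensional model at all; it instead uses the distance-\emph{set} map $\mathcal D(X)=d(X\times X)$ into $(\mathcal X_{n(n-1)/2},d_H)$, which deliberately forgets multiplicities, is merely $2$-Lipschitz and non-injective (Proposition \ref{prop:distance_set_lipschitz}), and gets the dimension bound from $\mathcal D$ being coarsely $k$-to-$1$ (Theorem \ref{theo:coarsely_k_to_1_appl}) combined with the dimension inequality for such maps (Theorem \ref{theo:coarsely_k_to_1}) and the inductive computation $\asdim\mathcal X_N\leq N$ (Lemma \ref{lemma:asdimX_n}). Your stratification $\GHs^{\leq n}=\bigcup_{m\leq n}\GHs^{=m}$ with the finite union theorem, and the quotient-by-finite-group step, are fine in themselves, but they are moot once the comparison lemma collapses.

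Your lower bound, by contrast, is essentially sound and is in the same spirit as the paper's Lemma \ref{lemma:asdim_geq}: your cubes $\mathcal F_R$ with all distances in $[R,\tfrac32 R]$, on which every correspondence of distortion $<R$ is forced to be a bijection, reprove by hand the fact the paper imports from Iliadis--Ivanov--Tuzhilin, namely that $\GHs^{\leq n}$ contains isometric copies of arbitrarily large $\ell^\infty$-balls of $\R^{n(n-1)/2}$ (to get genuinely isometric cubes you should pass to a sub-cube on which coordinates are pairwise well separated, so that the $S_n$-identifications in the quotient are inactive; its side still grows like $R/n^2$). Two small points need saying explicitly: each cube, and even their coarse disjoint union, consists of bounded pieces of asymptotic dimension $0$, so plain monotonicity does not conclude; one needs the lemma that a space containing isometric copies of arbitrarily large balls of $\R^N$ has asymptotic dimension at least $N$, which is exactly \cite[Lemma 2.10]{MitVir} as cited in the paper.
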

\begin{theoremintro}\label{theo:theoB}
The space $\mathcal{GH}^{<\omega}$ of isometry classes of finite metric spaces endowed with the Gromov-Hausdorff distance cannot be coarsely embedded into any uniformly convex Banach space, and so, in particular, into any Hilbert space.
\end{theoremintro}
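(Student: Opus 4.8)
The plan is to transfer the problem from the Gromov--Hausdorff space to the more concrete Euclidean--Hausdorff space $\EHs$ of isometry classes of finite subsets of $\R$, equipped with the distance $\EH$, and then to import a known coarse non-embeddability obstruction living there. The starting observation is that the Gromov--Hausdorff distance of two subsets of a common metric space never exceeds their Hausdorff distance, even after minimising over the isometries of $\R$; hence the canonical map $\iota\colon(\EHs,\EH)\to(\GHs^{<\omega},\GH)$ sending a finite subset to its isometry class is $1$-Lipschitz, i.e.\ $\GH(A,B)\le\EH(A,B)$ for all $A,B$. Consequently, if $\mathcal F\subseteq\EHs$ is a subfamily on which $\GH$ and $\EH$ are \emph{coarsely equivalent} (the inequality $\GH\le\EH$ being automatic, what must be added is a lower bound $\GH\ge g\circ\EH$ with $g\to\infty$), then $\iota|_{\mathcal F}$ is a coarse embedding; any coarse embedding of $(\GHs^{<\omega},\GH)$ into a Banach space $Y$ would then restrict, through $\iota|_{\mathcal F}$, to a coarse embedding of $(\mathcal F,\EH)$ into $Y$, the two-sided control \eqref{eq:rho-_rho+} being preserved since $\GH\le\EH$ supplies the upper estimate and the coarse equivalence the lower one. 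It therefore suffices to exhibit a single family $\mathcal F\subseteq\EHs$ that is coarsely non-embeddable into every uniformly convex Banach space and on which $\GH$ and $\EH$ are coarsely equivalent.

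For the obstruction I would realise inside $\EHs$ uniformly bi-Lipschitz copies of the sequence of $\ell_\infty$-cubes, equivalently the discrete tori $(\Z_m^n,\|\cdot\|_\infty)$. The geometric feature I would exploit is that the Hausdorff distance is a supremum and hence behaves like an $\ell_\infty$-norm across widely separated ``blocks'': placing, for each coordinate $i$, a bounded-diameter gadget $T^i_{x_i}$ encoding the $i$-th coordinate inside the $i$-th of $n$ blocks whose mutual separations are a fixed huge scale $M\gg m$, one obtains sets $S_{\vec x}=\bigcup_i T^i_{x_i}$ with $\EH(S_{\vec x},S_{\vec y})\asymp\max_i|x_i-y_i|$, the block separations being coordinate-independent and thus invisible to $\EH$. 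Assembling these as a coarse disjoint union yields a single subspace $\mathcal F\subseteq\EHs$ into which the $\ell_\infty$-grid sequence equi-coarsely embeds. By the metric cotype theorem of Mendel and Naor, a Banach space of finite cotype does not equi-coarsely contain this sequence; since every uniformly convex Banach space has finite cotype, $\mathcal F$ is coarsely non-embeddable into any uniformly convex Banach space. This is the $\R$-valued instance of the hyperspace obstruction of \cite{WeiYamZav}, which I would either cite or reprove in this concrete setting.

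The heart of the argument, and the step I expect to be the main obstacle, is the comparison lemma: that on $\mathcal F$ one also has $\GH(S_{\vec x},S_{\vec y})\ge c\,\EH(S_{\vec x},S_{\vec y})$ for a universal $c>0$. The difficulty is that $\GH$ is an infimum over \emph{all} correspondences, a vastly larger class than the rigid motions of $\R$ over which $\EH$ is computed, so a priori an abstract correspondence could collapse the $\ell_\infty$-distance that $\EH$ detects. I would control this by a rigidity argument built on the scale separation: each $S_{\vec x}$ is a disjoint union of $n$ clusters of diameter $O(m)$ whose inter-cluster distances are multiples of $M$, so any correspondence of distortion below $M/2$ must preserve the clustering and, the inter-cluster pattern being an almost-arithmetic progression, match the $i$-th cluster of $S_{\vec x}$ to the $i$-th cluster of $S_{\vec y}$ (up to a global reversal, harmless as it is a reflection of $\R$). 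The distortion of such a correspondence is then bounded below by the worst intra-cluster discrepancy, i.e.\ by $\max_i\GH(T^i_{x_i},T^i_{y_i})\asymp\max_i|x_i-y_i|\asymp\EH(S_{\vec x},S_{\vec y})$, where the innermost comparison is the bounded-complexity base case for the individual gadgets.

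Assembling the pieces, a coarse embedding of $\GHs^{<\omega}$ into a uniformly convex $Y$ would yield, via $\iota|_{\mathcal F}$ and the comparison lemma, an equi-coarse embedding of the $\ell_\infty$-grid sequence into $Y$, contradicting finite cotype; hence no such embedding exists, and in particular none into a Hilbert space. The two places demanding genuine care are the rigidity/comparison lemma above and the verification that the gadgets $T^i_{\bullet}$ simultaneously reproduce the target $\ell_\infty$-metric under $\EH$ and remain rigid under arbitrary correspondences; the remainder is bookkeeping with the moduli $\rho_-,\rho_+$.
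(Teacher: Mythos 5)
Your overall architecture coincides with the paper's: transfer the problem to the space $\EHs_1^{<\omega}$ of finite subsets of the line via $\GH\leq\EH$, build rigid ``block'' configurations in $\R$ realising sup-metric cubes, and conclude with a known obstruction for uniformly convex targets. You diverge, however, in the two external inputs. For the transfer, the paper does not need any family-specific comparison lemma: it invokes the Majhi--Vitter--Wenk theorem (Theorem \ref{thm:biLipschitz}, \cite{MajVitWen}), which gives $\tfrac45\EH(X,Y)\leq\GH(X,Y)\leq\EH(X,Y)$ for \emph{all} compact subsets of $\R$, so the inclusion $\EHs_1^{<\omega}\hookrightarrow\GHs^{<\omega}$ is globally bi-Lipschitz; the step you correctly identify as the main obstacle (controlling arbitrary correspondences, not just isometries of $\R$) is thereby outsourced to the literature, and Theorem \ref{theo:theoB} reduces to Theorem \ref{thm:EH_not_emb}. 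For the obstruction, the paper uses Lafforgue's expanders (Theorem \ref{thm:Lafforgue}) together with the Kuratowski embedding of an arbitrary finite metric space into $([0,m]^n,d_m^n)$ (Lemma \ref{lemma:Kuratowski}), and then shows via the maps $\varphi_m^n$ --- essentially your block construction --- that $\EHs_1^{<\omega}$ contains a coarse disjoint union of all such cubes; you instead equi-coarsely embed the $\ell_\infty$-grids and invoke Mendel--Naor metric cotype. Both are legitimate: uniform convexity implies finite cotype, so your route even yields the formally stronger conclusion of non-embeddability into any Banach space of finite cotype (e.g.\ $L_1$), whereas the paper's route yields the stronger structural fact that $\EHs_1^{<\omega}$ equi-coarsely contains \emph{every} family of finite metric spaces, so that any future family-type obstruction applies automatically.

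One concrete misstep: your claim that the global reversal is ``harmless as it is a reflection of $\R$'' is false as stated, and more generally your assertion that $\EH(S_{\vec x},S_{\vec y})\asymp\max_i\lvert x_i-y_i\rvert$ fails without an anchoring device. Since $\EH$ minimises over all isometries of $\R$, a translation by $t$ alone reduces the distance to roughly $\min_t\max_i\lvert x_i-y_i-t\rvert$ (for $n=1$ one even gets $\EH=0$ for single-point gadgets), and with evenly spaced blocks at positions $iM$ a reflection aligns block $i$ with block $n+1-i$, so the distance can drop to the order of $\min_t\max_i\lvert x_i-y_{n+1-i}-t\rvert$; neither quantity dominates $\max_i\lvert x_i-y_i\rvert$, and the same defects reappear for low-distortion correspondences in your comparison lemma, where an order-reversing cluster matching is perfectly admissible when the inter-cluster gaps are equal. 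This is precisely why the paper adjoins the distant point $D(m,n)$ to $\varphi_m^n((x_i)_i)$: in Lemma \ref{lemma:varphi} it penalises translations (yielding the lower control $\rho_-\colon x\mapsto x/2$) and, via Claim \ref{claim:no_rotation}, rules out reflections outright. Your construction is repairable by the same devices --- e.g.\ per-block anchors $T^i_{x_i}=\{iM,\,iM+x_i\}$ to pin translations, plus an asymmetric anchor or strictly increasing gap pattern to forbid reversal --- after which your outline, including the correspondence-rigidity argument, goes through; but as written this is a gap, not mere bookkeeping.
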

As an immediate consequence of Theorem \ref{theo:theoB}, the same result holds for the {\em Gromov-Hausdorff space} $\mathcal{GH}$, which is the metric space of isometry classes of compact metric spaces equipped with the Gromov-Hausdorff distance. In the paper, we prove a stronger version of Theorem \ref{theo:theoB} stating that already the much smaller subspace of $\mathcal{GH}^{<\omega}$ consisting of all isometry classes of finite subsets of the real line cannot be coarsely embedded into any uniformly convex Banach space (Theorem \ref{thm:EH_not_emb}). Thanks to a recent result due to Majhi, Vitter and Wenk (\cite{MajVitWen}), Theorem \ref{thm:EH_not_emb}, and therefore also Theorem \ref{theo:theoB}, will follow from the fact that an obstruction to coarse embeddability is found in 
the space of isometry classes of finite subsets of the real line endowed with the Euclidean-Hausdorff distance (a modification of the Gromov-Hausdorff distance for subsets of $\R^d$).

We conclude the paper focusing on the subspace $\mathcal{GH}_{\leq R}^{<\omega}$ of $\mathcal{GH}^{<\omega}$ whose elements have diameter bounded by a constant $R>0$. Since the diameter of this subspace is bounded, the map collapsing the space into a point is trivially a coarse embedding. An immediate follow-up question is whether it can be bi-Lipschitz embedded, as in the case of persistence diagrams with at most $n$ points. We provide a partial negative answer.

\begin{theoremintro}\label{theo:theoC}
$\mathcal{GH}_{\leq R}^{<\omega}$ cannot be bi-Lipschitz embedded into any finite-dimensional Hilbert space.
\end{theoremintro}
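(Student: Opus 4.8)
The plan is to obstruct the bi-Lipschitz embedding by a scale-invariant invariant, the Assouad dimension $\dim_A$. Recall that $\dim_A$ cannot increase under a bi-Lipschitz embedding and that $\dim_A(\R^N)=N$; since every finite-dimensional Hilbert space is bi-Lipschitz equivalent to some $\R^N$, it suffices to prove that $\mathcal{GH}_{\leq R}^{<\omega}$ has \emph{infinite} Assouad dimension. To work with a concrete model I would pass, exactly as in the proof of Theorem~\ref{thm:EH_not_emb}, to the space of isometry classes of finite subsets of a bounded interval endowed with the Euclidean-Hausdorff distance $\EH$. The comparison result of Majhi, Vitter and Wenk \cite{MajVitWen} provides a two-sided bound $c\,\EH(A,B)\le\GH(A,B)\le\EH(A,B)$ for finite subsets of the line, hence identifies this space, up to a \emph{bi-Lipschitz} homeomorphism onto its image, with a subspace of $\mathcal{GH}_{\leq R}^{<\omega}$. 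Since $\dim_A$ is a bi-Lipschitz invariant and is monotone under passing to subspaces, it is enough to exhibit configurations in the Euclidean-Hausdorff space of finite subsets of $[0,R]$ witnessing infinite Assouad dimension.

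For the construction, given $m\ge 1$ and a small scale $r>0$, I would fix a cluster spacing $D$ with $2r<D$ and $mD\le R$, placing $m$ clusters along $[0,R]$. To each bit string $b=(b_1,\dots,b_m)\in\{0,1\}^m$ I associate the finite set
\[
A_b=\{\,jD+b_j r : 1\le j\le m\,\}\subseteq[0,R].
\]
A direct computation shows that for $b\ne b'$ every point of $A_b$ realises its distance to $A_{b'}$ inside its own cluster, the adjacent clusters lying at distance at least $D-r>r$; the two directed Hausdorff distances therefore coincide and $\EH(A_b,A_{b'})=r$, while $\EH(A_b,A_b)=0$. Thus $\{A_b : b\in\{0,1\}^m\}$ is an \emph{equilateral} family of $2^m$ points, all at mutual distance exactly $r$, contained in a single ball of radius $r$; note that the diameter of each $A_b$ is at most $mD\le R$, so these are legitimate points of the diameter-bounded space.

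Finally I would read off infinite Assouad dimension. Fixing any base point $A_{b^0}$, the ball $B(A_{b^0},2r)$ contains the $r$-separated set $\{A_b : b\in\{0,1\}^m\}$ of cardinality $2^m$, while the scale ratio is $2r/r=2$. Hence no bound of the form (number of points of an $r$-separated subset of a ball of radius $R'$) $\le C\,(R'/r)^s$ can hold with fixed $C,s$: letting $m\to\infty$ defeats every such inequality, giving $\dim_A=\infty$, and the theorem follows through the reduction above. I expect the transfer step to be the main obstacle. Unlike the coarse non-embeddability of Theorem~\ref{theo:theoB} and Theorem~\ref{thm:EH_not_emb}, this obstruction lives at arbitrarily \emph{small} scales (the equilateral families require $r\to 0$ under the constraint $mD\le R$), so the passage from $\EH$ to $\GH$ must be controlled bi-Lipschitzly and uniformly across the shrinking scales $r$; a merely coarse comparison would not preserve $\dim_A$. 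This is precisely the quantitative lower bound $\GH(A,B)\ge c\,\EH(A,B)$ on finite subsets of the real line supplied by \cite{MajVitWen}, and verifying that it applies with a scale-independent constant to the families $\{A_b\}$ is the crux of the argument.
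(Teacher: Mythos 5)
Your overall route is exactly the paper's: reduce to the Euclidean--Hausdorff space of finite subsets of an interval via the bi-Lipschitz comparison of Theorem \ref{thm:biLipschitz}, then show infinite Assouad dimension and invoke monotonicity of $\dim_A$ under bi-Lipschitz embeddings together with $\dim_A\R^N=N$ (this is the content of Theorem \ref{thm:EH_leq_R_non_embed} and Proposition \ref{prop:dimA}). The step you flag as the crux, however, is not where the difficulty lies: the Majhi--Vitter--Wenk bound $\frac{4}{5}\EH\leq\GH\leq\EH$ is linear with absolute constants, so it transfers $\dim_A$ across all scales automatically. The genuine gap is in your ``direct computation'' that the family $\{A_b : b\in\{0,1\}^m\}$ is equilateral at distance $r$ in $\EH$.

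That claim is false, because $\EH$ is an infimum over \emph{all} isometries of $\R$, and your computation only evaluates the identity, which yields merely the upper bound $\EH(A_b,A_{b'})\leq d_H(A_b,A_{b'})=r$. For $b$ the all-ones string and $b'$ the all-zeros string one has $A_b=A_{b'}+r$, so $\EH(A_b,A_{b'})=0$: these are the \emph{same} point of the space of isometry classes. If $b$ and $b'$ differ in a single bit, translating by $r/2$ gives $\EH(A_b,A_{b'})\leq r/2$; and reflections of $\R$ identify each $A_b$ with the set of a reversed bit string, collapsing further pairs. So your family is neither of cardinality $2^m$ in the quotient nor uniformly $r$-separated, and the counting at the end does not go through. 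This is precisely the obstacle that the paper's Proposition \ref{prop:dimA} engineers around: the sets $A_i$ there all contain the common anchor points $0$, $R/2$, $R$, and differ by flipping one distinguished point from $il+s$ to $il-s$ rather than by a free perturbation; the anchors first rule out reflections (adapting Claim \ref{claim:no_rotation}) and then a triangle-inequality argument shows no translation can bring $A_i$ within $2s$ of $A_j$. Your construction can plausibly be repaired in the same spirit --- add shared anchor points rigidifying the configurations against $\Iso(\R)$, or fix a few bits and discard reflection-equivalent strings to retain a subfamily of size $2^{m-O(1)}$ that is $r/2$-separated --- but as written the key separation claim fails, and with it the proof.
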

Inspired by the paper \cite{CarBau}, where the authors proved that certain spaces of persistence diagrams cannot be bi-Lipschitz embedded into any finite-dimensional Hilbert space, we compute the Assouad dimension (\cite{Ass}, see also \cite{Bou} for an earlier definition) of $\GHs_{\leq R}^{<\omega}$ and show that it is infinite. This dimension notion was in fact introduced to provide such embeddability obstructions. More precisely, we show that already the subset consisting of all isometry classes of finite subsets of an interval has infinite Assouad dimension and cannot be bi-Lipschitz embedded into any finite-dimensional Hilbert space. Again, using the aforementioned Majhi, Vitter and Wenk's theorem, we deduce our claims from the analogous results for the space of isometry classes of finite subsets of an interval endowed with the Euclidean-Hausdorff distance (Theorem \ref{thm:EH_leq_R_non_embed} and Proposition \ref{prop:dimA}).

\smallskip

The paper is organised as follows. In Section \ref{sec:GH_EH} we provide the needed background regarding the Gromov- and the Euclidean-Hausdorff distances. In Section \ref{sec:asdim}, the asymptotic dimension is introduced and Theorem \ref{theo:theoA} is proved. Theorem \ref{theo:theoB} is shown in Section \ref{sec:non_coarse_emb}, and, finally, we define the Assouad dimension and provide Theorem \ref{theo:theoC} in Section \ref{sec:non_bi_lip_emb}. We conclude the paper discussing a list of questions in \S\ref{sub:q}.


\smallskip

{\bf Acknowledgements.} The author was supported by the FWF Grant, Project number I4245-N35. 
The author would like to thank Thomas Weighill for the helpful discussions around Theorem \ref{theo:coarsely_k_to_1_appl}, and Takamitsu Yamauchi for bringing to my attention the fundamental reference \cite{IliIvaTuz}. Furthermore, the author is thankful for the detailed and helpful comments of the reviewer of this manuscript. 

\smallskip

{\bf Notation.} 
We denote by $\N$, $\Q$ and $\R$ the set of natural numbers including $0$, the set of rational numbers, and the set of real numbers, respectively. For $c\in\R$, we also write 
$$\R_{\geq c}=\{x\in\R\mid x\geq c\},\quad\text{and}\quad\R_{>c}=\{x\in\R\mid x>c\}.$$

For a set $X$, we denote by $\lvert X\rvert$ its cardinality. Moreover, for $n\in\N$, we define the following subsets of the power set of $X$: 
$$[X]^{=n}=\{A\subseteq X\mid\lvert A\rvert= n\},\quad[X]^{\leq n}=\bigcup_{k\leq n}[X]^{=n},\quad\text{and}\quad[X]^{<\omega}=\bigcup_{k\in\N}[X]^{=n}.$$


\section{The Gromov-Hausdorff distance and the Euclidean-Hausdorff distance}\label{sec:GH_EH}

We recall some basic notions, the definitions of the Gromov-Hausdorff and the Euclidean-Hausdorff distances and their relationships. We refer to \cite{BurBurIva,Pet,Tuz,Mem,Mem12} for comprehensive discussions on the Gromov-Hausdorff distance.

\begin{definition}\label{def:distance}
A pair $(X,d)$ consisting of a set $X$ and a map $d\colon X\times X\to\R$ is called a {\em network} (\cite{ChoMem3}). A network $(X,d)$ is a {\em metric space} (and $d$ is a {\em metric}) if it satisfies the following properties:
\begin{compactenum}
\item[(M1)] for every $x,y\in X$, $d(x,y)\geq 0$ and $d(x,x)=0$;
\item[(M2)] for every $x,y\in X$, $d(x,y)=0$ if and only if $x=y$;
\item[(M3)] for every $x,y\in X$, $d(x,y)=d(y,x)$;
\item[(M4)] for every $x,y,z\in X$, $d(x,y)\leq d(x,z)+d(z,y)$.
\end{compactenum}
\end{definition}
Let us recall that an {\em isometry} between two networks $(X,d_X)$ and $(Y,d_Y)$ is a bijective map $\psi\colon X\to Y$ such that, for every $x,x^\prime\in X$, $d_Y(\psi(x),\psi(x^\prime))=d_X(x,x^\prime)$. In that case, $X$ and $Y$ are said to be {\em isometric}.

For a subset $A$ of a metric space $(X,d)$, its {\em diameter} is the value
$$\diam A=\sup_{x,y\in A}d(x,y).$$

A {\em correspondence $\mathcal R$} between two sets $X$ and $Y$ is a relation $\mathcal R\subseteq X\times Y$ such that every $x\in X$ is in relation with at least one element $y\in Y$ and vice versa. Then, for every metric space $(Z,d)$, the {\em Hausdorff distance} is defined as follows: for every $X,Y\subseteq Z$,
$$d_H(X,Y)=\inf_{\mathcal R\subseteq X\times Y\text{ correspondence}}\sup_{(x,y)\in\mathcal R}d(x,y).$$

\begin{definition}\label{def:GH}
Given two metric spaces $X$ and $Y$, their {\em Gromov-Hausdorff distance} $\GH$ is the value
$$\GH(X,Y)=\inf_{\text{$Z$ metric space}}\inf\{d_H(i_X(X),i_Y(Y))\mid i_X\colon X\to Z\text{ and }i_Y\colon Y\to Z\text{ isometric embeddings}\}.$$
\end{definition}
The reader may notice an abuse of notation in the previous definition since all possible metric spaces form a proper class. However, the infimum value can be achieved by investigating just a set of spaces. Indeed, it is enough to consider the disjoint union $X\sqcup Y$ endowed with pseudo-metrics (where the distance between distinct points may be zero) whose restrictions to the subsets $X$ and $Y$ coincide with the original metrics. We refer to \cite{BurBurIva} for the details. 

If two metric spaces are isometric, their Gromov-Hausdorff distance is $0$. The converse implication does not hold in general. However, if $X$ and $Y$ are compact and $\GH(X,Y)=0$, then $X$ and $Y$ are isometric.


Denote by $\GHs$ the set of all isometry classes of compact metric spaces endowed with $\GH$, where the Gromov-Hausdorff distance between two isometry classes is the Gromov-Hausdorff distance between any pair of representatives. Since two compact metric spaces are isometric if and only if their Gromov-Hausdorff distance is $0$ (see, for example, \cite{BurBurIva}), $\GHs$ is a metric space, also called the {\em Gromov-Hausdorff space}. Furthermore, we consider the subspace $\GHs^{<\omega}$ of $\GHs$ consisting of isometry classes of finite metric space, which is dense in $\GHs$. Actually, the subspace $\GHs^{<\omega}_\Q$ of isometry classes of finite spaces endowed with metrics taking values in $\Q$ is dense in $\GHs$ (\cite{Pet}). Therefore, $\GHs$ is separable.

The Gromov-Hausdorff distance can be alternatively characterised using correspondences. 
If $(X,d_X)$ and $(Y,d_Y)$ are two networks, 
and $\mathcal R\subseteq X\times Y$ is a correspondence between them, the {\em distortion of $\mathcal R$} is the value
$$\dis\mathcal R=\sup_{(x_1,y_1),(x_2,y_2)\in\mathcal R}\lvert d_X(x_1,x_2)-d_Y(y_1,y_2)\rvert.$$
\begin{definition}[\cite{ChoMem3}]
Let $X$ and $Y$ be two networks, then their {\em network distance} is the value
$$d_{\mathcal N}(X,Y)=\frac{1}{2}\inf_{\text{$\mathcal R\subseteq X\times Y$ correspondence}}\dis\mathcal R.$$
\end{definition}
It is known that, if $X$ and $Y$ are metric spaces, then $\GH(X,Y)=d_{\mathcal N}(X,Y)$ (see, for example, \cite{BurBurIva}). 
A further characterisation of the Gromov-Hausdorff distance can be found in \cite{KalOst}.


%
In \cite{Zav}, a characterisation of the network distance for {\em quasi-metric spaces} (i.e., networks satisfying (M1), (M2) and (M4)) in the spirit of Definition \ref{def:GH} is provided.

The Gromov-Hausdorff distance is difficult to compute even in simple cases. For example, the distance between spheres of different dimensions endowed with their geodesic distance is not known in general (\cite{LimMemSmi,AdaBusCla+}). To approximate it, it is convenient to consider another related distance.
\begin{definition}
Let $X$ and $Y$ be two subsets of $\R^d$. Consider them as metric spaces. Then, their {\em Euclidean-Hausdorff distance} $\EH$ is defined as follows:
$$\EH(X,Y)=\inf\{d_H(i_X(X),i_Y(Y))\mid i_X\colon X\to\R^d\text{ and }i_Y\colon Y\to\R^d\text{ isometric embeddings}\}.$$
\end{definition}
Using the following folklore result (see, for example, \cite[Ch. IV, \S38]{Blu}), $\EH$ can be conveniently characterised. 
\begin{theorem}
If $f\colon X\to Y$ is an isometry between two subsets of $\R^d$, then there exists an isometry $\widetilde f\colon\R^d\to\R^d$ such that  $\widetilde f|_X=f$.  
\end{theorem}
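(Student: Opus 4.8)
The plan is to produce $\widetilde f$ as an affine isometry of $\R^d$, that is, a map of the form $x\mapsto Ax+b$ with $A\in O(d)$ and $b\in\R^d$, pinned down by its action on a well-chosen \emph{finite} affinely independent subset of $X$. First I would pick a maximal affinely independent subset $\{x_0,x_1,\dots,x_k\}$ of $X$; since $\R^d$ carries at most $d+1$ affinely independent points we have $k\leq d$, and its affine hull $\operatorname{aff}\{x_0,\dots,x_k\}$ contains all of $X$. This is the step that reduces the possibly infinite configuration $X$ to finitely many points. Writing $y_i=f(x_i)$, the points $y_0,\dots,y_k$ are again affinely independent, because affine independence of a finite point set is detected by its Cayley--Menger determinant, which is a function of the pairwise distances alone, and $f$ preserves those distances.

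Next I would build the candidate extension. After translating $x_0$ and $y_0$ to the origin, the vectors $u_i=x_i-x_0$ and $v_i=y_i-y_0$ (for $1\le i\le k$) have the same Gram matrix: by polarisation $\langle u_i,u_j\rangle$ is a fixed combination of the squared distances $\lVert x_i-x_0\rVert^2$, $\lVert x_j-x_0\rVert^2$, $\lVert x_i-x_j\rVert^2$, all preserved by $f$, and likewise for the $v_i$. Equality of Gram matrices makes $u_i\mapsto v_i$ extend to a linear isometry between the two $k$-dimensional spans, which in turn extends to some $A\in O(d)$ by matching orthonormal bases of the spans and of their orthogonal complements. Composing $A$ with the two translations yields an affine isometry $\widetilde f$ of $\R^d$ with $\widetilde f(x_i)=y_i$ for every $i$.

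The remaining and, I expect, decisive step is to check that $\widetilde f|_X=f$. Fix $x\in X$. On one hand $\widetilde f(x)$ and $f(x)$ have the same distance to each $y_i$: for $\widetilde f(x)$ because $\widetilde f$ is a global isometry sending $x_i$ to $y_i$, and for $f(x)$ because $f$ is an isometry. On the other hand both points lie in $\operatorname{aff}\{y_0,\dots,y_k\}$: for $\widetilde f(x)$ this is because $x\in\operatorname{aff}\{x_0,\dots,x_k\}$ and $\widetilde f$ is affine, while for $f(x)$ it follows from the vanishing of the Cayley--Menger determinant of $\{x_0,\dots,x_k,x\}$, a consequence of maximality, which is again preserved under $f$. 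I would then invoke the elementary fact that in a $k$-dimensional Euclidean affine subspace a point is uniquely determined by its distances to $k+1$ affinely independent points spanning it: if $p,q$ in the affine hull satisfy $\lVert p-y_i\rVert=\lVert q-y_i\rVert$ for all $i$, then $\langle p-q,\,y_i-y_0\rangle=0$ for every $i$, so $p-q$ is orthogonal to its own direction space and hence $p=q$. Applying this to $p=\widetilde f(x)$ and $q=f(x)$ gives $\widetilde f(x)=f(x)$, completing the argument.
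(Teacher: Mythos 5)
Your proof is correct and complete: the reduction to a maximal affinely independent subset, the Gram-matrix (polarisation) argument producing $A\in O(d)$, and the final uniqueness step (a point of a $k$-dimensional affine subspace is determined by its distances to $k+1$ affinely independent points of it) together constitute the standard argument for this extension theorem. Note that the paper offers no proof of its own --- it records the statement as folklore with a citation to Blumenthal --- so there is nothing to diverge from; your argument is essentially the classical one found in such references, and the only details left implicit (the trivial cases $X=\emptyset$ and $k=0$, where a translation or any isometry suffices) are harmless.
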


\begin{corollary}{\rm (see, for example, \cite[Corollary  4.3]{Ant1})} 
If $X$ and $Y$ are two subsets of $\R^d$, then
$$\EH(X,Y)=\inf_{f\in\Iso(\R^d)}d_H(X,f(Y)),$$
where $\Iso(\R^d)$ denotes the group of isometries of $\R^d$.
\end{corollary}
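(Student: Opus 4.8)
The plan is to prove the corollary by establishing the two inequalities separately, using exactly two ingredients: the extension theorem stated immediately above (every isometry between subsets of $\R^d$ extends to a global isometry of $\R^d$) and the invariance of the Hausdorff distance under ambient isometries. The latter I would record first, since it is used repeatedly: if $\phi\in\Iso(\R^d)$, then the assignment $\mathcal R\mapsto\{(\phi(x),\phi(y))\mid(x,y)\in\mathcal R\}$ is a bijection between correspondences of $X,Y$ and correspondences of $\phi(X),\phi(Y)$ that preserves all the values $d(x,y)$, so the correspondence formula for $d_H$ gives $d_H(X,Y)=d_H(\phi(X),\phi(Y))$.

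For the inequality $\EH(X,Y)\leq\inf_{f\in\Iso(\R^d)}d_H(X,f(Y))$, I would simply exhibit a distinguished subfamily of the embedding pairs over which the defining infimum of $\EH$ is taken. Namely, for an arbitrary $f\in\Iso(\R^d)$, let $i_X$ be the inclusion $X\hookrightarrow\R^d$ and $i_Y=f|_Y$; both are isometric embeddings, and for this pair one has $d_H(i_X(X),i_Y(Y))=d_H(X,f(Y))$. Since the defining infimum ranges over a family containing all such pairs, it can only be smaller than $\inf_f d_H(X,f(Y))$.

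For the reverse inequality, I would start from an arbitrary pair of isometric embeddings $i_X\colon X\to\R^d$ and $i_Y\colon Y\to\R^d$. Viewing $i_X$ as an isometry between the subsets $X$ and $i_X(X)$ of $\R^d$, the extension theorem provides $\widetilde{i_X}\in\Iso(\R^d)$ with $\widetilde{i_X}|_X=i_X$, and likewise an extension $\widetilde{i_Y}\in\Iso(\R^d)$ of $i_Y$. Setting $f=\widetilde{i_X}^{-1}\circ\widetilde{i_Y}\in\Iso(\R^d)$ and applying the invariance of $d_H$ under $\widetilde{i_X}^{-1}$, and noting that $\widetilde{i_X}^{-1}(i_X(x))=x$ for $x\in X$ while $\widetilde{i_X}^{-1}(i_Y(y))=f(y)$ for $y\in Y$, I obtain
$$d_H(i_X(X),i_Y(Y))=d_H\bigl(\widetilde{i_X}^{-1}(i_X(X)),\widetilde{i_X}^{-1}(i_Y(Y))\bigr)=d_H(X,f(Y))\geq\inf_{g\in\Iso(\R^d)}d_H(X,g(Y)).$$
Taking the infimum over all embedding pairs yields $\EH(X,Y)\geq\inf_g d_H(X,g(Y))$, and combining the two inequalities gives the claimed equality.

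I do not expect a real obstacle here: all the genuine content sits in the extension theorem, which I am assuming. The only points requiring a moment's care are to verify that the Hausdorff distance is invariant under the ambient isometries (immediate from the correspondence formula) and that the inclusion $X\hookrightarrow\R^d$ is itself an admissible isometric embedding, so that the two infima in fact range over comparable families of maps.
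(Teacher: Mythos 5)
Your proof is correct and follows exactly the route the paper intends: the corollary is stated there without proof, presented as an immediate consequence of the quoted extension theorem (with a citation to \cite[Corollary 4.3]{Ant1}), and your two-inequality argument --- restricting the defining infimum to pairs $(\mathrm{incl}, f|_Y)$ for one direction, and extending arbitrary embeddings to global isometries and conjugating by $\widetilde{i_X}^{-1}$ for the other --- is precisely the standard derivation being invoked. No gaps; the only content is indeed the extension theorem, which you correctly assume.
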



Clearly, $\EH(X,Y)\geq\GH(X,Y)$ for every pair $X$ and $Y$ of subsets of an euclidean space $\R^d$. Moreover, the inequality can be strict (for example, see \cite{Mem}). A lower bound on the Gromov-Hausdorff distance depending on the Euclidean-Hausdorff distance was provided in \cite{Mem}.
\begin{theorem}
For every pair of compact subsets $X$ and $Y$ of $\R^d$,
$$\GH(X,Y)\leq\EH(X,Y)\leq c_d\sqrt{M\cdot\GH(X,Y)},$$
where $M=\max\{\diam X,\diam Y\}$ and $c_d$ is a constant depending only on the dimension $d$.
\end{theorem}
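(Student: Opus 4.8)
The plan is to establish the two inequalities separately; the left one is essentially formal, while the right one carries all of the content.

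For $\GH(X,Y)\le\EH(X,Y)$ I would argue directly from the definitions. By the Corollary above, $\EH(X,Y)=\inf_{f\in\Iso(\R^d)}d_H(X,f(Y))$, and every such pair $(i_X,i_Y)=(\mathrm{id},f)$ with $f\in\Iso(\R^d)$ is an admissible pair of isometric embeddings into the single metric space $Z=\R^d$ in Definition \ref{def:GH}. Hence the infimum defining $\GH$ ranges over a strictly larger family and can only be smaller. For the right inequality, write $\epsilon=\GH(X,Y)=d_{\mathcal N}(X,Y)$ and $M=\max\{\diam X,\diam Y\}$. Fixing a small $\delta>0$, I would choose a correspondence $\mathcal R$ with $\dis\mathcal R\le 2\epsilon+\delta=:\eta$ and pick, for each $x\in X$, a partner $\phi(x)\in Y$ with $(x,\phi(x))\in\mathcal R$. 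The resulting map $\phi\colon X\to Y$ is an \emph{$\eta$-approximate isometry}, i.e. $\bigl|\,\lvert\phi(x)-\phi(x')\rvert-\lvert x-x'\rvert\,\bigr|\le\eta$ for all $x,x'$, and its image is $\eta$-dense in $Y$ (if $(x,y)\in\mathcal R$ then $\lvert y-\phi(x)\rvert\le\eta$). It therefore suffices to produce $T\in\Iso(\R^d)$ with $\sup_{x\in X}\lvert T(\phi(x))-x\rvert\le c_d\sqrt{M\eta}$: combined with the $\eta$-density of $\phi(X)$ and the fact that $T$ is an isometry, this yields $d_H(X,T(Y))\le c_d\sqrt{M\eta}+\eta$, and letting $\delta\to 0$ gives the claim.

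The core is thus a quantitative rigidity statement for Euclidean almost-isometries. Translating so that a basepoint $x_0$ and $\phi(x_0)$ sit at the origin, polarization upgrades the approximate preservation of distances to an approximate preservation of inner products, with error $\lesssim M\eta$ since all relevant vectors have length $\lesssim M$. The naive idea of aligning the two full configurations at once through their Gram matrices fails, because the operator norm of the Gram perturbation is not controlled independently of the number of points. Instead I would select greedily a subset $S=\{x_0,x_1,\dots,x_k\}$, with $k\le d$, that $\theta$-spans $X$ for the threshold $\theta:=\sqrt{M\eta}$: at each step one adds the point whose distance to the affine span of the previously chosen ones is largest, stopping once every remaining point lies within $\theta$ of that span (compactness of $X$ guarantees the furthest point is attained, and the affine dimension forces termination after at most $d$ steps). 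By construction each chosen point has "height" exceeding $\theta$ over the previous span, so the bounded-size configuration $S$ is well conditioned at scale $\theta$.

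Aligning $S$ to $\phi(S)$ is now a Procrustes problem on at most $d+1$ vectors whose Gram matrices differ by $\lesssim M\eta$; the conditioning lower bound $\gtrsim\theta$ converts this into a single orthogonal map, hence a $T\in\Iso(\R^d)$, with $\lvert T(\phi(x_i))-x_i\rvert\lesssim M\eta/\theta$ on $S$. Every remaining $x\in X$ is then handled by observing that both $x$ and $\phi(x)$ lie within $\theta$ of their respective spans, and that their coordinates inside the spans are determined, up to $\lesssim M\eta/\theta$, by the approximately preserved distances to $S$, while the components orthogonal to the spans contribute at most $\theta$ each. Summing gives $\lvert T(\phi(x))-x\rvert\lesssim M\eta/\theta+\theta$, which is minimized exactly at $\theta=\sqrt{M\eta}$, producing the bound $c_d\sqrt{M\eta}$. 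This balancing is precisely where the square root originates, and the estimate is tight: a slightly bent arc versus a straight segment of comparable length saturates it. The step I expect to be the main obstacle is the conditioning analysis — ensuring that the greedy selection yields constants depending only on $d$ (and not on the number of points), that the Procrustes error is genuinely controlled by the heights, and that the near-degenerate ("thin") directions, which cannot be aligned, nevertheless contribute no more than $\theta=\sqrt{M\eta}$ to the Hausdorff distance.
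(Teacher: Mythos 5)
Your overall architecture is the right one, and it matches the standard proof: the paper itself does not prove this theorem but quotes it from M\'emoli \cite{Mem}, whose argument follows exactly your skeleton. The left inequality is indeed formal (pairs $(\mathrm{id},f)$ with $f\in\Iso(\R^d)$ and $Z=\R^d$ are a subfamily of the competitors in Definition \ref{def:GH}); the passage from a correspondence of distortion $\eta=2\GH(X,Y)+\delta$ to an $\eta$-nearisometry $\phi\colon X\to Y$ with $\eta$-dense image is correct, as is the final assembly $d_H(X,T(Y))\le c_d\sqrt{M\eta}+\eta$. What remains is precisely the quantitative rigidity statement you isolate: every $\eta$-nearisometry of a subset of $\R^d$ of diameter at most $M$ is uniformly $c_d\sqrt{M\eta}$-close to a genuine isometry of $\R^d$. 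This is the isometric approximation theorem of Alestalo, Trotsenko and V\"ais\"al\"a, which M\'emoli invokes as a black box, and your bent-arc example for sharpness of the square root is the standard one.

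The gap is exactly where you feared, and as written it is fatal for $d\ge 3$. Heights bounded below by $\theta$ do \emph{not} make $S$ well conditioned at scale $\theta$: for $k$ vectors of norm at most $M$ with successive Gram--Schmidt heights $h_i\ge\theta$, the smallest singular value of the frame is only of order $\theta(\theta/M)^{k-1}$. Equivalently, reconstructing the coordinates of a point from its (approximate) inner products with $S$ compounds an error factor $M/h$ per direction: in the orthonormal frame from Gram--Schmidt one has the recursion $\mathrm{err}_k\approx(M\eta+M\cdot\mathrm{err}_{k-1})/h_k$, hence $\mathrm{err}_k\sim M^{k-1}\eta/\theta^{k-1}$ rather than your claimed $M\eta/\theta$; the same loss infects the Procrustes alignment of $S$ itself, whose error is governed by the smallest singular value, not by $\theta$. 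Balancing $M^{d-1}\eta/\theta^{d-1}+\theta$ optimizes at $\theta\sim M^{1-1/d}\eta^{1/d}$ and yields only an exponent $\eta^{1/d}$ — your scheme genuinely proves the theorem in dimensions $d\le 2$, where no compounding occurs, but not the square root in general. The reason the theorem is nevertheless true is a repair phenomenon your construction cannot exploit: a point whose last coordinate is badly determined in the \emph{fixed} frame of $S$ can be realigned by a small rotation about the span of the earlier points at negligible cost to them, so the isometry must be corrected dimension by dimension rather than chosen once from $S$. That is exactly what the Alestalo--Trotsenko--V\"ais\"al\"a induction on dimension does, and it is considerably more delicate than the greedy-plus-Procrustes shortcut; for a complete proof you should either reproduce that induction or, as M\'emoli does, cite their theorem and keep your (correct) reduction around it.
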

However, if $X$ and $Y$ are two finite subsets of $\R$, a linear lower bound to $\GH$ depending on $\EH$ can be proved.	
\begin{theorem}{\rm (\cite[Theorem 3.2]{MajVitWen})}\label{thm:biLipschitz}
 For every pair $X$ and $Y$ of compact subsets of $\R$,
	$$\frac{4}{5}\EH(X,Y)\leq\GH(X,Y)\leq\EH(X,Y).$$
\end{theorem}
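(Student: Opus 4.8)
The inequality $\GH(X,Y)\le\EH(X,Y)$ is immediate and was already observed above: an isometric embedding of a subset of $\R$ into $\R$ is in particular an isometric embedding into some metric space, so the infimum defining $\GH$ ranges over a larger family than the one defining $\EH$. Hence the whole content is the lower bound $\tfrac45\EH(X,Y)\le\GH(X,Y)$, equivalently $\EH(X,Y)\le\tfrac54\GH(X,Y)$. The plan is to work with the correspondence characterisation $\GH=\tfrac12\inf_{\mathcal R}\dis\mathcal R$ and, starting from a correspondence $\mathcal R\subseteq X\times Y$ with $\dis\mathcal R$ arbitrarily close to $2\GH(X,Y)$, to manufacture a genuine isometry $f\in\Iso(\R)$ for which $d_H(X,f(Y))\le\tfrac54\GH(X,Y)$. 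The bridge between the two is the observation that the push-forward $\{(x,f(y))\mid(x,y)\in\mathcal R\}$ is again a correspondence, so that $d_H(X,f(Y))\le\sup_{(x,y)\in\mathcal R}\lvert x-f(y)\rvert$; it therefore suffices to choose $f$ making this supremum small.

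Write $\eta=\dis\mathcal R$, $a=\min X$, $b=\max X$ (these exist by compactness), and fix correspondents $\alpha,\beta\in Y$ with $(a,\alpha),(b,\beta)\in\mathcal R$. Since reflecting $Y$ is an isometry of $\R$ and changes neither $\GH$, nor $\EH$, nor the distortion of the correspondence, I may assume $\alpha\le\beta$; this is how the orientation ambiguity of $\Iso(\R)=\{x\mapsto\pm x+t\}$ gets resolved, the reflection case being absorbed into this normalisation. The extremal pair gives $\bigl\lvert(b-a)-(\beta-\alpha)\bigr\rvert\le\eta$, so the two spans nearly agree, and for a general pair $(x,y)\in\mathcal R$ the inequalities $\bigl\lvert(x-a)-\lvert y-\alpha\rvert\bigr\rvert\le\eta$ and $\bigl\lvert(b-x)-\lvert y-\beta\rvert\bigr\rvert\le\eta$ add up to $\lvert y-\alpha\rvert+\lvert y-\beta\rvert\le(b-a)+2\eta$. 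Comparing this with $\beta-\alpha\ge(b-a)-\eta$ forces every correspondent $y$ to lie within $O(\eta)$ of the interval $[\alpha,\beta]$ and, on that interval, to satisfy $y\approx\alpha+(x-a)$. I would then take $f$ to be the translation carrying the span of $Y$ onto the span of $X$, with its offset chosen to balance the two extreme deviations, and bound $\lvert x-f(y)\rvert$ by combining the two anchored inequalities above.

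The main obstacle is the sharp constant. The monotone part of the correspondence contributes deviations of order $\eta$ that are easy to control; the difficulty lies with the non-monotone correspondents --- points $y$ falling slightly outside $[\alpha,\beta]$ or on the ``wrong side'' of an anchor --- which is precisely the phenomenon that keeps $\EH$ strictly above $\GH$ and prevents the constant from being $1$. Bounding their contribution requires using the two anchor inequalities simultaneously (using only one of them yields a far weaker constant) and then optimising the translation offset against the worst such configuration; the value $\tfrac54$ should emerge from this optimisation. A secondary technical point is that $X$ and $Y$ are merely compact, so one must justify the existence of extremal correspondents and pass from near-optimal correspondences to the stated inequality by a limiting argument, which compactness of $X$ and $Y$ makes routine.
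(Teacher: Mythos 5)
The first thing to note is that the paper contains no proof of this statement: it is imported wholesale as \cite[Theorem 3.2]{MajVitWen}, so your attempt is being measured against Majhi--Vitter--Wenk's external argument, not anything internal to this paper. Your easy half is fine ($\GH\leq\EH$ by comparing the two infima), your reduction is the correct opening move (push a near-optimal correspondence $\mathcal R$ forward by $f\in\Iso(\R)$, use that $\{(x,f(y))\mid (x,y)\in\mathcal R\}$ is again a correspondence to get $d_H(X,f(Y))\leq\sup_{(x,y)\in\mathcal R}\lvert x-f(y)\rvert$), and the two anchored inequalities you derive are correct. But the proof has a genuine gap exactly where you flag it: ``the value $\tfrac54$ should emerge from this optimisation'' is not a proof, and in fact it would \emph{not} emerge from the optimisation you set up. Run your own estimates: with $\eta=\dis\mathcal R\approx 2\GH(X,Y)$, the target is $d_H(X,f(Y))\leq\tfrac54\GH(X,Y)=\tfrac58\eta$. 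Your two anchor inequalities give, for a wrong-side correspondent $y<\alpha$, only $\alpha-y\leq\tfrac32\eta$ and hence $x-a\leq\tfrac52\eta$, so under the translation $t=a-\alpha$ such a point deviates by up to $(x-a)+(\alpha-y)\approx 4\eta$, while right-side points deviate by $\eta$; balancing the offset then yields roughly $\tfrac52\eta$, i.e.\ the constant $5$ rather than $\tfrac54$. So a direct implementation of your plan proves $\EH\leq c\cdot\GH$ for some modest $c$, which already suffices for every application in this paper (coarse and bi-Lipschitz non-embeddability only need the inclusion $\EHs_1^{<\omega}\hookrightarrow\GHs^{<\omega}$ to be bi-Lipschitz, not the specific constant), but it does not prove the stated theorem.

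Closing the gap from such a constant down to $\tfrac54$ is the actual content of \cite[Theorem 3.2]{MajVitWen}, and it needs ingredients absent from your sketch: one must use the distortion inequality between \emph{arbitrary} pairs of correspondents, not only pairs involving the extremes $a=\min X$ and $b=\max X$; one must decompose the correspondence according to pointwise orientation behaviour; and one must compare several candidate isometries rather than a single translation with an optimised offset. Your normalisation ``assume $\alpha\leq\beta$ by reflecting $Y$'' does not dispose of the reflection issue for the same reason: the orientation ambiguity is pointwise --- individual correspondents can be matched in an orientation-reversing way even when the extremal pair is orientation-preserving --- and that mixed regime is precisely where the sharp constant is decided. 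Your compactness remarks (existence of extremal correspondents, passing from near-optimal correspondences to the infimum) are routine, as you say; they are not where the difficulty lies.
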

If we denote  by $\mathcal{EH}_1$ ($\EHs_1^{<\omega}$) the space of isometry classes of compact (finite, respectively) subsets of the real line endowed with the Euclidean-Hausdorff distance, the canonical inclusion of $\mathcal{EH}_1$ into $\GHs$ and that of $\EHs_1^{<\omega}$ into $\GHs^{<\omega}$ 
are bi-Lipschitz 
according to Theorem \ref{thm:biLipschitz}. Let us recall that a map $\psi\colon X\to Y$ between metric spaces is a {\em bi-Lipschitz embedding} if there are two linear maps $\rho_-\colon x\mapsto a\cdot x$ and $\rho_+\colon x\mapsto b\cdot x$, where $a,b>0$, satisfying the condition \eqref{eq:rho-_rho+}.

\section{The space of metric spaces of at most $n$ points is coarsely embeddable into a Hilbert space}\label{sec:asdim}

Given two subsets $Y,Z$ of a metric space $(X,d)$ and a radius $r\geq 0$ we write
$$\dist_d(Y,Z)=\dist(Y,Z)=\inf\{d(y,z)\mid y\in Y, z\in Z\},\quad\text{and}\quad B_d(Y,r)=\bigcup_{y\in Y}B_d(y,r),$$
where $B_d(y,r)$ denotes the closed ball centred in $y$ with radius $r$.

A family of subsets $\mathcal U$ of a metric space $X$ is said to be 
\begin{compactenum}[$\bullet$]
\item {\em uniformly bounded} if there exists $R\geq 0$ such that $\diam U\leq R$ for every $U\in\mathcal U$ (if we need to specify the value $R$, we say it is {\em $R$-bounded});
\item {\em $r$-disjoint} for some $r>0$ if $\dist(U,V)>r$ for every $U,V\in\mathcal U$ with $U\neq V$.
\end{compactenum}

\begin{definition}[\cite{Gro_asdim}]
Let $X$ be a metric space. The {\em asymptotic dimension} of $X$ is at most $n\in\N$ (and we write $\asdim X\leq n$) if for every $r>0$ there exists a uniformly bounded cover $\mathcal U=\mathcal U_0\cup\cdots\cup\mathcal U_n$ of $X$ such that $\mathcal U_i$ is $r$-disjoint for every $i=0,\dots,n$. Furthermore, $\asdim X=n$ if $\asdim X\leq n$ and $\asdim X\not\leq n$, and $\asdim X=\infty$ if 
$\asdim X\not\leq m$ for every $m\in\N$.
\end{definition}

\begin{example}[see \cite{NowYu}]
For every $n\in\N$, $\asdim\R^n=\asdim(\R_{\geq 0})^n=n$ where $\R^n$ is equipped with any $p$-norm, $p\in[1,\infty]$, and $(\R_{\geq 0})^n$ with any of the inherited metrics.
\end{example}
Let us recall two basic properties of the asymptotic dimension that we are going to use later in this section.
\begin{proposition}[see \cite{BelDra}]\label{prop:asdim_monotone}
Let $(X,d)$ be a metric space and $Y\subseteq X$ be a subspace. Then
\begin{compactenum}[(a)]
\item $\asdim Y\leq\asdim X$, and
\item $\asdim Y=\asdim X$ provided that $Y$ is {\em large} in $X$, i.e., there exists $r\geq 0$ such that $B_d(Y,r)=X$.
\end{compactenum}
\end{proposition}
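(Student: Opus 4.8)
The plan is to prove the two parts separately, with part (a) being a direct restriction argument and part (b) a thickening argument that, combined with (a), yields the equality. For part (a), I would start from the hypothesis $\asdim X \leq n$. Fix $r>0$ and take a uniformly bounded cover $\mathcal U = \mathcal U_0 \cup \cdots \cup \mathcal U_n$ of $X$ with each $\mathcal U_i$ being $r$-disjoint. The idea is simply to intersect everything with $Y$: set $\mathcal V_i = \{U \cap Y \mid U \in \mathcal U_i\}$ and $\mathcal V = \mathcal V_0 \cup \cdots \cup \mathcal V_n$. Since the members of $\mathcal U$ cover $X$, the sets $U \cap Y$ cover $Y$; intersecting with $Y$ does not increase diameters, so $\mathcal V$ stays uniformly bounded; and since $\dist(U \cap Y, U' \cap Y) \geq \dist(U, U')$, each $\mathcal V_i$ remains $r$-disjoint. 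Hence $\asdim Y \leq n$, and taking the infimum over admissible $n$ yields $\asdim Y \leq \asdim X$.

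For part (b), by (a) it suffices to prove the reverse inequality $\asdim X \leq \asdim Y$. Suppose $\asdim Y \leq n$ and fix $r>0$. I would apply the definition of $\asdim Y \leq n$ with the \emph{enlarged} parameter $4r$, obtaining a uniformly bounded (say $R$-bounded) cover $\mathcal V = \mathcal V_0 \cup \cdots \cup \mathcal V_n$ of $Y$ with each $\mathcal V_i$ being $4r$-disjoint. The key move is to thicken each piece inside $X$: for every $i$ set $\mathcal U_i = \{B_d(V, r) \mid V \in \mathcal V_i\}$ and let $\mathcal U = \mathcal U_0 \cup \cdots \cup \mathcal U_n$.

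It then remains to verify the three required conditions. Covering: because $Y$ is large, $B_d(Y,r) = X$, so every $x \in X$ lies within $r$ of some $y \in Y$, which belongs to some $V$; hence $x \in B_d(V,r)$ and $\mathcal U$ covers $X$. Uniform boundedness: $\diam B_d(V,r) \leq \diam V + 2r \leq R + 2r$. The $r$-disjointness: for distinct $V, V' \in \mathcal V_i$, any $a \in B_d(V,r)$ and $b \in B_d(V',r)$ admit witnesses $v \in V$, $v' \in V'$ with $d(a,v), d(b,v') \leq r$, whence by the triangle inequality $d(a,b) \geq d(v,v') - 2r \geq \dist(V,V') - 2r > 4r - 2r = 2r > r$, so $\mathcal U_i$ is $r$-disjoint. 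This gives $\asdim X \leq n$, and therefore $\asdim X \leq \asdim Y$, completing the proof. The only delicate point I anticipate is the bookkeeping in part (b): one must choose the enlarged parameter (here $4r$, though any value exceeding $3r$ works) so that the loss of $2r$ incurred by thickening both families still leaves the enlarged pieces strictly more than $r$ apart; everything else is routine.
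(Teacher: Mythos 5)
Your proposal is correct and is exactly the standard restriction-and-thickening argument: the paper gives no proof of its own, citing Bell--Dranishnikov \cite{BelDra}, where this is precisely how (a) and (b) are established. The only blemish is that in (b) you conflate the fixed largeness radius (call it $r_0$, with $B_d(Y,r_0)=X$) with the arbitrary parameter $r$: your covering step needs $r\geq r_0$, but this is harmless since it suffices to verify the defining condition of $\asdim X\leq n$ for all sufficiently large $r$ (equivalently, thicken by $r_0$ and take the cover of $Y$ to be $(r+2r_0)$-disjoint), and everything else in your write-up goes through verbatim.
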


The goal of this section is to prove Theorem \ref{theo:theoA}, which we obtain as a particular case of the more general Corollary \ref{coro:asdimGHn}.

\begin{lemma}\label{lemma:asdim_geq}
For every $n\in\N$, $\asdim\GHs^{\leq n}\geq n(n-1)/2.$    
\end{lemma}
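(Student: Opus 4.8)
The plan is to exhibit inside $\GHs^{\le n}$ a bi-Lipschitz copy of the orthant $(\R_{\ge 0})^{N}$, with $N=\binom n2=n(n-1)/2$, endowed with the $\ell^\infty$-metric. Since $\asdim$ is a coarse (in particular a bi-Lipschitz) invariant, such a copy has $\asdim=N$ by the computation $\asdim(\R_{\ge 0})^{N}=N$ recalled above, and then Proposition \ref{prop:asdim_monotone}(a) gives $\asdim\GHs^{\le n}\ge N$. I identify an $n$-point metric space (up to relabelling) with its distance vector $d=(d(i,j))_{1\le i<j\le n}\in\R^{N}$, let $\mathcal M_n\subseteq\R^{N}$ be the cone of vectors satisfying all triangle inequalities, and write $X_d$ for the associated space. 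The cone $\mathcal M_n$ is a full-dimensional closed convex cone, since the uniform vector $d(i,j)\equiv 1$ satisfies every triangle inequality strictly and is therefore interior; so $\mathcal M_n$ contains $N$ linearly independent interior metrics, and the whole point is to choose them so that $\GH$ on the resulting simplicial sub-cone reproduces the $\ell^\infty$-geometry of the parameters.

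For the construction I assume $n\ge 3$, the cases $n\le 2$ being immediate (the target is $0$, or is handled directly by $\ell\mapsto\{\text{two points at distance }\ell\}$). First I would fix an interior point $w\in\mathcal M_n$ that is a \emph{rigid} metric, i.e. one whose isometry group is trivial, equivalently $\sigma\cdot w\ne w$ for every nontrivial coordinate permutation $\sigma$ induced by the action of $S_n$ on pairs; a generic strict metric has this property. As $w$ is interior I may choose a basis $h_1,\dots,h_N$ of $\R^{N}$ of small norm with $v_\alpha:=w+h_\alpha\in\mathcal M_n$ and the $v_\alpha$ linearly independent. The simplicial sub-cone $S=\{\sum_\alpha s_\alpha v_\alpha:s\in\R_{\ge 0}^{N}\}\subseteq\mathcal M_n$ is the image of $\R_{\ge 0}^{N}$ under an injective linear map $T$, so $c\,\|s-s'\|_\infty\le\|Ts-Ts'\|_\infty\le C\,\|s-s'\|_\infty$ for suitable $c,C>0$. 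I set $\Phi(s)=X_{Ts}\in\GHs^{\le n}$. The upper bound $\GH(\Phi(s),\Phi(s'))\le\frac12\|Ts-Ts'\|_\infty\le\frac C2\|s-s'\|_\infty$ follows at once from the identity correspondence together with $\GH=d_{\mathcal N}$.

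The hard part will be the matching lower bound $\GH(\Phi(s),\Phi(s'))\ge c'\|s-s'\|_\infty$, since it requires controlling \emph{all} correspondences, not only the bijective ones. Writing $d=Ts=\lambda(w+p)$ and $d'=Ts'=\mu(w+q)$ with $\lambda=\|s\|_1$, $\mu=\|s'\|_1$ and $\|p\|,\|q\|$ small, and recalling $\GH=\frac12\inf_{\mathcal R}\dis\mathcal R$, I split into cases. If $\mathcal R$ is bijective it is the graph of a permutation $\sigma$ and $\dis\mathcal R=\|d-\sigma\cdot d'\|_\infty$: for $\sigma=\mathrm{id}$ this is $\ge c\,\|s-s'\|_\infty$ by injectivity of $T$, while for $\sigma\ne\mathrm{id}$ the identity $\|\sigma\cdot w\|_\infty=\|w\|_\infty$ together with rigidity forces $\sigma\cdot w$ to be non-parallel to $w$, whence $\|\lambda w-\mu\,\sigma\cdot w\|_\infty\ge c_0(\lambda+\mu)$ and so $\dis\mathcal R\ge(c_0-\varepsilon)(\lambda+\mu)\ge c''\|s-s'\|_\infty$, using $\varepsilon$ small and $\|s-s'\|_\infty\le\lambda+\mu$. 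If $\mathcal R$ is not bijective, then some point is matched to two distinct points, giving $\dis\mathcal R\ge\min(m(X_d),m(X_{d'}))$ with $m(\cdot)$ the minimal distance; combined with the bound $\dis\mathcal R\ge|\diam X_d-\diam X_{d'}|$ obtained from a diameter-realising pair, a short analysis according to whether the scales $\lambda,\mu$ are comparable or disparate again yields $\dis\mathcal R\ge c''\|s-s'\|_\infty$ (comparable scales make $m(X_d),m(X_{d'})\gtrsim\lambda\gtrsim\|s-s'\|_\infty$, while disparate scales make the two diameters differ by $\gtrsim\|s-s'\|_\infty$). Collecting the cases gives the uniform lower bound, so $\Phi$ is a bi-Lipschitz embedding and the argument closes as in the first paragraph. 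The genuine obstacle is exactly this uniform estimate over non-bijective correspondences and over all permutations at every scale; the remaining ingredients are the convexity observation on $\mathcal M_n$ and the standard coarse invariance of $\asdim$ (\cite{BelDra}).
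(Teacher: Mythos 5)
Your argument is correct, but it follows a genuinely different route from the paper, which disposes of the lemma in two lines by citation: \cite{IliIvaTuz}, Theorem 4.1, supplies isometric copies of arbitrarily large balls of $(\R^{n(n-1)/2},\ell^\infty)$ inside $\GHs^{\leq n}$ (a consequence of their local result that a small ball around a generic $n$-point space is isometric to a ball in $\R^{n(n-1)/2}$ with the supremum metric, rescaled), and \cite[Lemma 2.10]{MitVir} converts the presence of such balls into the lower bound $\asdim\GHs^{\leq n}\geq n(n-1)/2$. You instead construct by hand a single global bi-Lipschitz copy of the orthant $((\R_{\geq 0})^N,\ell^\infty)$, $N=n(n-1)/2$, as a simplicial subcone of the metric cone around a rigid interior metric $w$, and your case analysis over correspondences does check out: for the identity permutation you use invertibility of $T$; for a nontrivial permutation $\sigma$ the homogeneous estimate $\lVert\lambda w-\mu\,\sigma\cdot w\rVert_\infty\geq c_0(\lambda+\mu)$ is legitimate, since $\sigma\cdot w=cw$ would force $c=1$ (equal $\ell^\infty$-norms) and hence contradict rigidity, and compactness of $\{\lambda+\mu=1\}$ gives $c_0>0$; for non-bijective correspondences the dichotomy between the minimal-distance bound $\dis\mathcal R\geq\min(m(X_d),m(X_{d'}))\gtrsim\min(\lambda,\mu)$ (comparable scales) and the universal bound $\dis\mathcal R\geq\lvert\diam X_d-\diam X_{d'}\rvert$ (disparate scales, including the degenerate point $s=0$) covers all cases, because a correspondence between two $n$-point sets that is not a bijection must match some point to two distinct points. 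Comparing the two: the paper's proof is shorter and yields isometric, not merely bi-Lipschitz, large balls, but it is non-self-contained and intrinsically local; yours is elementary and explicit, proves the slightly stronger statement that a whole bi-Lipschitz orthant sits inside $\GHs^{\leq n}$, and makes visible exactly where rigidity of $w$ and the distance-set scale enter--at the cost of the uniform distortion estimates you rightly identify as the hard part. In a final write-up you should spend one sentence on the linear independence of the $v_\alpha=w+h_\alpha$ (e.g.\ for $h_\alpha=\epsilon e_\alpha$ one computes $\det(\epsilon I+w\mathbf{1}^{T})=\epsilon^{N-1}(\epsilon+\sum_j w_j)\neq 0$) and one on the existence of rigid metrics for $n\geq 3$ (all $\binom{n}{2}$ distances pairwise distinct suffices, since $S_n$ acts faithfully on pairs for $n\geq 3$), both of which you currently assert generically.
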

\begin{proof}
According to \cite[Theorem 4.1]{IliIvaTuz}, $\GHs^{\leq n}$ contains isometric copies of arbitrarily large balls of $\R^{n(n-1)/2}$ endowed with the supremum metric. Then, \cite[Lemma 2.10]{MitVir} implies that $\asdim\GHs^{\leq n}\geq n(n-1)/2$.	
\end{proof}
In order to prove the opposite inequality, we need to show different steps. Let us start with recalling a known result.

\begin{theorem}\label{lemma:Rad_Shu}{\rm (\cite{RadShu}, see also \cite[Theorem 4.6]{KucZar})}
	For every $n\in\N$, $\asdim([X]^{\leq n},d_H)\leq n\asdim X$. In particular, $\asdim[\R]^{\leq n}\leq n$ and $\asdim[\R_{\geq 0}]^{\leq n}\leq n$.
\end{theorem}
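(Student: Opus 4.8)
The two displayed special cases are instances of the general inequality, since $\asdim\R=\asdim\R_{\ge 0}=1$, so the plan is to prove $\asdim([X]^{\le n},d_H)\le n\,\asdim X$, assuming $m:=\asdim X<\infty$ (otherwise there is nothing to prove). The guiding idea is that this is the coarse shadow of the classical hyperspace dimension fact $\dim[K]^{\le n}\le n\dim K$ for a polyhedron $K$, and I would transfer it through the nerve characterisation of asymptotic dimension: $\asdim X\le m$ precisely when, for every $R>0$, $X$ carries a uniformly bounded cover of multiplicity $\le m+1$ and Lebesgue number $\ge R$, equivalently (in the language of the paper) a uniformly bounded cover $\mathcal U=\mathcal U_0\cup\cdots\cup\mathcal U_m$ with each $\mathcal U_j$ being $s$-disjoint for a prescribed large $s$.

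Fix a target scale $r$ and choose $s\gg r$, the gap depending on $n$, and take such a cover $\mathcal U$. To each choice of at most $n$ cells of $\mathcal U$ I associate the family of those finite sets that can be distributed among exactly those cells; this induces a cover of $[X]^{\le n}$. The decisive point is a rigidity lemma: if $d_H(A,B)\le r$ and the cells of a fixed colour are more than $s\gg r$ apart, then each point of $A$ and its $r$-close partner in $B$ lie in the same cell, so $A$ and $B$ meet exactly the same cells and therefore share a type. Bounded cardinality now does two things. First, it makes the induced families uniformly bounded: a member has at most $n$ points, each in a uniformly bounded cell lying in a bounded region, hence bounded $d_H$-diameter. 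Second, the nerve of the induced cover maps into the $n$-fold symmetric power of $N(\mathcal U)$, which has covering dimension at most $n\dim N(\mathcal U)\le nm$; thus the induced cover can be arranged to have multiplicity $\le nm+1$ and large Lebesgue number, giving $\asdim[X]^{\le n}\le nm$.

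The main obstacle is the coarse bookkeeping: one must check that passing to at-most-$n$-element subsets preserves the Lebesgue number and the uniform boundedness with constants that grow with $n$ but remain finite, and that the multiplicity is genuinely $nm+1$ rather than the naive $n(m+1)$ — this sharpening is exactly the symmetric-power dimension count $n\dim$, the coarse analogue of the shaving used to prove $\asdim(X\times Y)\le\asdim X+\asdim Y$. It is also where the tempting shortcut fails: the orbit map $X^n\to[X]^{\le n}$ is $1$-Lipschitz and surjective but is not a coarse equivalence, and there is no global Lipschitz selection of an ordered representative, since $d_H$-close sets may rearrange their points and a cluster of points of $B$ sitting near a low-multiplicity point of $A$ cannot be matched back, so one must track cells rather than a single chosen representative. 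For the ordered model spaces the mechanism is transparent, and I would present it separately by simultaneous induction on $n$ for $\R$ and $\R_{\ge 0}$: the map $A\mapsto\min A$ is $1$-Lipschitz from $([\R]^{\le n},d_H)$ to $\R$, its coarse fibres are coarsely $([\R_{\ge 0}]^{\le n-1},d_H)$, and the asymptotic Hurewicz theorem of Bell and Dranishnikov yields $\asdim[\R]^{\le n}\le\asdim\R+\asdim[\R_{\ge 0}]^{\le n-1}=1+(n-1)$, closing the induction with base case $[\R]^{\le 1}\cong\R$ (the case of $\R_{\ge 0}$ being identical).
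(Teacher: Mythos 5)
First, a point of calibration: the paper does not prove this statement at all --- it is quoted verbatim from \cite{RadShu} (see also \cite[Theorem 4.6]{KucZar}), so there is no in-paper argument to compare against and your proposal must stand on its own. As written it does not, because its two load-bearing steps are gaps rather than details. The ``rigidity lemma'' is false as stated: in a cover $\mathcal U=\mathcal U_0\cup\cdots\cup\mathcal U_m$ the individual colour classes do not themselves cover $X$, so if $a\in A\cap U$ with $U\in\mathcal U_j$ and $b\in B$ is an $r$-close partner of $a$, then $b$ may lie in no cell of colour $j$ whatsoever (it can sit just outside $U$); hence ``$A$ and $B$ meet exactly the same cells and therefore share a type'' is simply wrong. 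The standard repair --- replacing cells by their $r$-enlargements, which remain $(s-2r)$-disjoint within a colour --- yields only one-sided containments between the cell patterns of $A$ and of $B$, not equality of types, and controlling how types drift under $d_H$-perturbation is exactly where the difficulty lives. Likewise, the assertion that the induced cover ``can be arranged to have multiplicity $\le nm+1$'' because the nerve maps into the $n$-fold symmetric power of $N(\mathcal U)$ (whose covering dimension is indeed $\le nm$) is precisely the content of the theorem: pulling a low-multiplicity cover of the symmetric power back to $[X]^{\le n}$ with a uniform Lebesgue number at the prescribed scale is the whole work carried out in \cite{RadShu}, and nothing in your sketch supplies it; naive bookkeeping gives only product-type bounds of the form $(m+1)(k+1)-1$.

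The induction for the model spaces contains a concrete error as well. The coarse fibre of $\min\colon([\R]^{\le n},d_H)\to\R$ over $t$ is not $[\R_{\ge 0}]^{\le n-1}$ but the \emph{pointed} hyperspace of sets of at most $n$ points containing their minimum --- after translating $t$ to $0$, exactly the space $\mathcal X_{n-1}$ of Lemma \ref{lemma:asdimX_n}. These two spaces are not coarsely equivalent: the natural identification $A\mapsto A\cup\{0\}$ collapses $\{M\}$ and $\{0,M\}$, which lie at Hausdorff distance $M$, so it is not a coarse embedding, and the bound $\asdim\mathcal X_{n-1}\le n-1$ requires its own argument --- which is precisely what the paper provides in Lemma \ref{lemma:asdimX_n}, by a decomposition into a far part and a neighbourhood of $\mathcal X_{n-2}$ glued via the union-theorem mechanism of \cite[Proposition 24]{BelDra}, with no Hurewicz input. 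Finally, the Bell--Dranishnikov Hurewicz theorem you invoke is stated for Lipschitz maps with \emph{geodesic} domain; you neither verify that $([\R]^{\le n},d_H)$ is geodesic (the obvious midpoint set built from an optimal matching between $A$ and $B$ can have up to $2n-1$ points and so leaves $[\R]^{\le n}$) nor substitute a version free of that hypothesis, and without the geodesic/shaving mechanism the fibrewise argument again yields only $(\asdim\R+1)\cdot n-1$ rather than $1+(n-1)$. In short: your architecture is reasonable and close in spirit both to \cite{RadShu} and to the paper's own handling of $\mathcal X_n$, but the type-invariance-with-multiplicity-$nm+1$ step and the fibre identification plus Hurewicz hypotheses are genuine gaps.
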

	
For every positive integer $n\in\N$, let us define $\mathcal X_n$ as the metric subspace of $([\R_{\geq 0}]^{\leq n+1},d_H)$ whose elements contain the point $0$. According to Theorem \ref{lemma:Rad_Shu}, $\asdim\mathcal X_n\leq n+1$ since the asymptotic dimension is monotone (Proposition \ref{prop:asdim_monotone}(a)). In the sequel, we improve that bound showing that $\asdim\mathcal X_n\leq n$. The proof outline is similar to and inspired by that of \cite[Theorem 3.2]{MitVir}. 
First, we need a classical preliminary result.

Given two families $\mathcal U$ and $\mathcal V$ of subsets of a metric space $X$ and $r>0$, we define a new family of subsets as follows:
\begin{gather*}\mathcal U\cup_r\mathcal V=\{N_r(U,V)\mid U\in\mathcal U\}\cup\{V\in\mathcal V\mid\forall U\in\mathcal U,\,\dist(V,U)>r\}\\
\text{where}\quad N_r(U,\mathcal V)=U\cup\bigcup\{V\in\mathcal V\mid\dist(U,V)\leq r\}.\end{gather*}
Let us immediately note that $\bigcup(\mathcal V\cup_r\mathcal U)\supseteq\bigcup\mathcal V\cup\bigcup\mathcal U$.

\begin{lemma}\label{lemma:BelDra}{\rm (\cite[Proposition 24]{BelDra})}
Let $\mathcal U$ be an $r$-disjoint, $R$-bounded family of subsets of $X$ with $R\geq r$. Let $\mathcal V$ be a $5R$-disjoint, uniformly bounded family of subsets of $X$. Then $\mathcal V\cup_r\mathcal U$ is $r$-disjoint and uniformly bounded.
\end{lemma}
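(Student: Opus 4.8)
The plan is to verify the two asserted properties of $\mathcal V\cup_r\mathcal U$ separately, handling uniform boundedness first and then $r$-disjointness, which is the substantial part. Throughout I would fix a constant $S\geq 0$ with $\diam V\leq S$ for all $V\in\mathcal V$ (which exists since $\mathcal V$ is uniformly bounded) and record the elementary containment that drives the whole argument: if $U'\in\mathcal U$ satisfies $\dist(V,U')\leq r$, then, since $\diam U'\leq R$, every point of $U'$ lies within distance $R+r$ of $V$, so $U'\subseteq B_d(V,R+r)$. Consequently each thickened set satisfies $N_r(V,\mathcal U)\subseteq B_d(V,R+r)$.

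For uniform boundedness I would observe that the two kinds of members of $\mathcal V\cup_r\mathcal U$ are bounded: a surviving element $U\in\mathcal U$ has $\diam U\leq R$, while a thickened set lies in $B_d(V,R+r)$ and hence, by the triangle inequality, has diameter at most $\diam V+2(R+r)\leq S+2R+2r$. Thus $\mathcal V\cup_r\mathcal U$ is $(S+2R+2r)$-bounded.

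For $r$-disjointness I would split into three cases according to the types of two distinct members $A,B$ of $\mathcal V\cup_r\mathcal U$. If both are surviving elements of $\mathcal U$, then $\dist(A,B)>r$ is immediate from the $r$-disjointness of $\mathcal U$. If $A=N_r(V,\mathcal U)$ is a thickened set and $B=U_0$ is a surviving element of $\mathcal U$, then $U_0$ is not absorbed by $V$ (otherwise it would not survive), so $\dist(V,U_0)>r$; moreover every $U'$ absorbed into $A$ is distinct from $U_0$, whence $\dist(U',U_0)>r$ by $r$-disjointness of $\mathcal U$, and combining these gives $\dist(A,U_0)>r$. The crucial case is $A=N_r(V_1,\mathcal U)$ and $B=N_r(V_2,\mathcal U)$ with $V_1\neq V_2$, where I would invoke the ball containment together with the $5R$-disjointness of $\mathcal V$: from $A\subseteq B_d(V_1,R+r)$, $B\subseteq B_d(V_2,R+r)$ and $\dist(V_1,V_2)>5R$ one gets $\dist(A,B)\geq\dist(V_1,V_2)-2(R+r)>5R-2(R+r)=3R-2r\geq r$, where the last inequality uses the hypothesis $R\geq r$ and the strictness comes from $\dist(V_1,V_2)>5R$.

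The main obstacle is this last case, and it is exactly where the constant $5R$ is calibrated: the thickening enlarges each $V$ by at most $R+r$, so two thickened sets are separated by at least $\dist(V_1,V_2)-2(R+r)$, and $5R$ is the smallest multiple of $R$ keeping this above $r$ under the sole assumption $R\geq r$. A secondary point worth checking, as a sanity check on the same constant, is that no single $U\in\mathcal U$ can be absorbed by two distinct $V_1,V_2\in\mathcal V$: such a $U$ would force $\dist(V_1,V_2)\leq 2r+\diam U\leq 2r+R\leq 3R<5R$, contradicting the $5R$-disjointness of $\mathcal V$.
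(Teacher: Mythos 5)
Your proof is correct, and since the paper states this lemma only as a citation (\cite[Proposition 24]{BelDra}) without reproducing a proof, there is no in-paper argument to diverge from: your route --- the containment $N_r(V,\mathcal U)\subseteq B_d(V,R+r)$, the three-case check of $r$-disjointness, and the calibration $5R-2(R+r)=3R-2r\geq r$ under $R\geq r$ --- is precisely the standard argument underlying the cited result, and your sanity check that no $U\in\mathcal U$ is absorbed by two distinct members of $\mathcal V$ is already subsumed by your main estimate, since $\dist(A,B)>r>0$ forces the thickened sets to be disjoint. The only hair worth splitting is that when $\dist(V,U')$ is not attained, the closed-ball containment should be read as the estimate $\dist(u,V)\leq R+r$ for every $u\in U'$ (argue with $r+\varepsilon$ and let $\varepsilon\to 0$); this is all your diameter and separation bounds actually use, and the strict inequality $\dist(V_1,V_2)>5R$ supplies the fixed positive gap needed to conclude $\dist(A,B)>r$ rather than merely $\geq r$, so nothing breaks.
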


\begin{lemma}\label{lemma:asdimX_n}
	For every $n\in\N$, $\asdim\mathcal X_n\leq n$.
\end{lemma}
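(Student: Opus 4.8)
The plan is to argue by induction on $n$, the base case $n=0$ being trivial since $\mathcal X_0=\{\{0\}\}$ is a single point. For the inductive step I would exploit that every $A\in\mathcal X_n$ is anchored at $0$ and run a Hurewicz-type fibring argument over the largest point. Concretely, consider the map
$$M\colon\mathcal X_n\to\R_{\geq 0},\qquad M(A)=\max A,$$
which is $1$-Lipschitz because $\lvert\max A-\max B\rvert\leq d_H(A,B)$. Since $\asdim\R_{\geq 0}=1$, for any scale I can $2$-colour $\R_{\geq 0}$ by well-separated unions of intervals and pull these back under $M$ to obtain two families of \emph{slabs} $\{A\mid M(A)\in I\}$. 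The goal is then to show that each such slab — where the top point is pinned to a bounded interval — has asymptotic dimension at most $n-1$ uniformly, so that assembling the slab covers with the base $2$-colouring yields $\asdim\mathcal X_n\leq 1+(n-1)=n$. The amalgamation of the fibrewise covers with the colouring of the base is exactly where Lemma \ref{lemma:BelDra} enters: choosing the interval gaps large compared with the diameters of the fibrewise cover elements makes the slab families sufficiently disjoint to be merged without destroying $r$-disjointness, collapsing the naive product colouring down to an $(n+1)$-coloured cover.

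To treat a single slab I would reduce to $\mathcal X_{n-1}$. Inside a slab the maximal point sits within a bounded interval $[c,c+S)$, and I would separate the configurations according to whether the remaining points cluster just below the top or are cut off from it by a large gap. When a gap of size comparable to $S$ separates the top from the rest, the set splits canonically into the isolated top point (contributing only a bounded amount) and a \emph{lower cluster} lying in $[0,c)$; the lower cluster, together with $0$, is an element of $\mathcal X_{n-1}$, so the inductive hypothesis $\asdim\mathcal X_{n-1}\leq n-1$ supplies an $n$-coloured, $r$-disjoint, uniformly bounded cover that I pull back to this part of the slab.

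The essential difficulty — and the step I expect to be the main obstacle — is that none of the natural dimension-reducing maps are coarse: deleting the top point, or thresholding away the points near $0$, is wildly discontinuous, since a single point crossing the threshold (or a second point approaching the maximum) changes the image by an amount comparable to the magnitude of that point rather than to $d_H(A,B)$; for instance $\{0,b\}$ and $\{0,b-\varepsilon,b\}$ are $\varepsilon$-close, yet their ``lower clusters'' $\{0\}$ and $\{0,b-\varepsilon\}$ are $\approx b$ apart. Consequently the clean reduction to $\mathcal X_{n-1}$ only works away from these boundary configurations, and the whole construction must be carried out simultaneously at three separated scales $r\ll R\ll S$: the sets sitting near a gap threshold are precisely the ones absorbed into neighbouring cover elements by Lemma \ref{lemma:BelDra}, at the cost of enlarging (but keeping uniformly bounded) the pieces while preserving $r$-disjointness. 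Verifying that this absorption simultaneously covers every configuration, keeps each colour $r$-disjoint and uniformly bounded, and propagates correctly through the induction on $n$ is the crux of the argument, and this is the part I expect to mirror most closely the bookkeeping in \cite[Theorem 3.2]{MitVir}.
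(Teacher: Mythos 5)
Your architecture --- fibre over $M(A)=\max A$, bound the fibres by $n-1$, and add $1+(n-1)$ --- is genuinely different from the paper's, and it has a real gap at exactly the point you flag as the crux: the uniform bound $\asdim\leq n-1$ for the slabs $M^{-1}(I)$ is never established, and the one mechanism you offer cannot establish it. Your reduction sends $A$ to its lower cluster together with $0$, an element of $\mathcal X_{n-1}$, but this makes sense only in the regime where the top point is separated from everything else by a large gap. In the complementary regime, where some of the remaining points accumulate near the moving maximum, the set does not split as (isolated top)${}+{}$(lower cluster): the upper cluster carries unbounded geometry of its own, and the lower-cluster map simply discards it, so it is not coarse on that part of the slab. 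To close the fibre step you would need a strictly stronger inductive statement --- a doubly anchored analogue of the lemma for sets pinned both at $0$ and at a point near the top (equivalently, a product-plus-finite-union argument over the ways of distributing points between a $0$-anchored and a top-anchored cluster, with uniformity over all slabs and all interface configurations). The single-anchor hypothesis $\asdim\mathcal X_{n-1}\leq n-1$, as you use it, does not feed back into itself; and Lemma \ref{lemma:BelDra} only merges two families already given, so the ``absorption at three scales'' cannot conjure covers for the intermediate-gap configurations that neither regime produces.

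The paper's proof avoids this entirely by splitting at the \emph{fixed} anchor rather than the moving maximum: $\mathcal X_{n+1}=\widetilde{\mathcal X_{n+1}}\cup B_{d_H}(\mathcal X_n,r)$, where $\widetilde{\mathcal X_{n+1}}$ consists of the sets whose only point within $r$ of the origin is $0$ itself. Any other set is $r$-close to an element of $\mathcal X_n$, obtained by collapsing its points within $r$ of $0$ into $0$ --- a cardinality-reducing move that is coarse precisely because the deleted points sit next to the \emph{same} point $0$ for every set, which is the feature your top-point deletion lacks; this piece is then handled by the inductive hypothesis together with largeness (Proposition \ref{prop:asdim_monotone}(b)). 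On $\widetilde{\mathcal X_{n+1}}$ no dimension reduction is attempted at all: deleting the common point $0$ embeds it into $[\R_{\geq 0}]^{\leq n+1}$, Theorem \ref{lemma:Rad_Shu} (Radul--Shukel') supplies an $(n+2)$-coloured cover there, and the $r$-separation from the origin guarantees that re-attaching $0$ preserves $r$-disjointness. A single colourwise application of Lemma \ref{lemma:BelDra} merges the two covers. So the top-dimensional work is outsourced to Theorem \ref{lemma:Rad_Shu} rather than to a uniform fibrewise claim, no Hurewicz-type theorem is invoked, and the discontinuity problem you correctly identified never arises.
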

\begin{proof}
	Let us prove the result by induction.
	
	If $n=1$, then, for every $r>0$, the usual uniformly bounded cover used to prove that the real line has asymptotic dimension $1$ (see \cite{BelDra,NowYu}) can be adapted. More precisely, for every $r>0$, define, for $i=0,1$,
	$$\mathcal U_i=\{\{\{0,x\}\mid x\in V_k^i\}\mid k\in\N\},\quad\text{where}\quad V_k^i=[(4k+2i)r,(4k+2i+2)r]\subseteq\R_{\geq 0}$$
(see Figure \ref{fig:asdim_R_leq_1}). Then, each $\mathcal U_i$ is $r$-disjoint and $\mathcal U_0\cup\mathcal U_1$ forms a uniformly bounded cover of $\mathcal X_1$. Hence, $\asdim\mathcal X_1\leq 1$.

 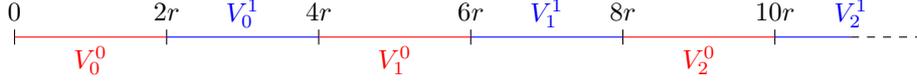
\begin{figure}
     \centering
     \begin{tikzpicture}
     \draw (0,-0.1)--(0,0.1)node[above]{$0$};
    \draw (2,-0.1)--(2,0.1)node[above]{$2r$};
     \draw (4,-0.1)--(4,0.1)node[above]{$4r$};
     \draw (6,-0.1)--(6,0.1)node[above]{$6r$};
     \draw (8,-0.1)--(8,0.1)node[above]{$8r$};
     \draw (10,-0.1)--(10,0.1)node[above]{$10r$};
     \draw[red] (1,0)node[below]{$V_0^0$};
     \draw[blue] (3,0)node[above]{$V_0^1$};
    \draw[red] (5,0)node[below]{$V_1^0$};
     \draw[blue] (7,0)node[above]{$V_1^1$};
    \draw[red] (9,0)node[below]{$V_2^0$};
     \draw[blue] (11,0)node[above]{$V_2^1$};
     \draw[dashed] (11,0)--(12,0);
     \draw[red](0,0)--(2,0) (4,0)--(6,0) (8,0)--(10,0);
     \draw[blue] (2,0)--(4,0) (6,0)--(8,0) (10,0)--(11,0);
     \end{tikzpicture}
     \caption{
     A representation of the uniformly bounded cover $\mathcal V=\mathcal V_0\cup\mathcal V_1$ showing that $\asdim\R_{\geq 0}\leq 1$. In red, the elements of $\mathcal V_0=\{V^0_k\mid k\in\N\}$ and the subsets contained in the family $\mathcal V_1=\{V_k^1\mid k\in\N\}$ are in blue.}
     \label{fig:asdim_R_leq_1}
 \end{figure}
	
	Suppose now that the assertion is true for some $n\in\N$. We want to show that $\asdim\mathcal X_{n+1}\leq n+1$. Fix a positive $r>0$. Then we can decompose
	$$\mathcal X_{n+1}=\widetilde{\mathcal X_{n+1}}\cup B_{d_H}(\mathcal X_n,r),$$
	where $\widetilde{\mathcal X_{n+1}}$ consists of subsets $A$ of $\R_{\geq 0}$ such that, if $x\in A$ has $\lvert x\rvert\leq r$, then $x=0$. It indeed represents a partition. In fact, every element $C\in\mathcal X_{n+1}$ can be decomposed $C=C_0\cup C_1$ where $\max_{x\in C_0}\lvert x\rvert\leq r$ and $\min_{x\in C_1}\lvert x\rvert>r$. Then, either $C_0=\{0\}$ and so $C\in\widetilde{\mathcal X_{n+1}}$, or $d_H(C,C_1\cup\{0\})\leq r$ and $C_1\cup\{0\}\in\mathcal X_n$.
	
	Let us now consider $\widetilde{\mathcal X_{n+1}}$, and define the subspace $\mathcal Y_n=\{X\setminus\{0\}\mid X\in\widetilde{\mathcal X_{n+1}}\}$ of $[\R_{\geq 0}]^{\leq n+1}$. Monotonicity of the asymptotic dimension (Proposition \ref{prop:asdim_monotone}(a)) implies that $\asdim\mathcal Y_n\leq n+1$. Hence, there exists a uniformly bounded cover $\mathcal W=\mathcal W_0\cup\cdots\cup\mathcal W_{n+1}$ of $\mathcal Y_n$ such that $\mathcal W_i$ is $r$-disjoint for every $i=0,\dots,n+1$. Let $R\geq r$ be an upper bound to the diameter of the elements in $\mathcal W$. For every $i=0,\dots,n+1$, construct the family of subsets $\widetilde{\mathcal W_i}=\{W\cup\{0\}\mid W\in\mathcal W_i\}$ of $\widetilde{\mathcal X_{n+1}}$. Note that $\widetilde{\mathcal W}=\widetilde{\mathcal W_0}\cup\cdots\cup \widetilde{\mathcal W_{n+1}}$ is a cover of $\widetilde{\mathcal X_{n+1}}$. Furthermore, each element of $\widetilde{\mathcal W}$ has diameter bounded by $R$ and it is easy to see that each of the families $\widetilde{\mathcal W_i}$, $i=0,\dots,n+1$, is $r$-disjoint.
	
	Since $\mathcal X_n$ is large in $B_{d_H}(\mathcal X_n,r)$, $\asdim B_{d_H}(\mathcal X_n,r)\leq n\leq n+1$ by the inductive hypothesis and Proposition \ref{prop:asdim_monotone}(b). Hence, there exists a uniformly bounded cover $\mathcal V=\mathcal V_0\cup\cdots\cup\mathcal V_{n+1}$ of $B_{d_H}(\mathcal X_n,r)$ such that $\mathcal V_i$ is $5R$-disjoint for every $i=0,\dots,n+1$. 
	
	Define, for every $i=0,\dots,n+1$,
	$$\mathcal U_i=\widetilde{\mathcal W_i}\cup_r\mathcal V_i.$$
	Hence, $\mathcal U=\mathcal U_0\cup\cdots\cup\mathcal U_{n+1}$ is a uniformly bounded cover of $\mathcal X_{n+1}=\bigcup\widetilde{\mathcal W}\cup\bigcup\mathcal V$ and $\mathcal U_i$ is $r$-disjoint for every $i=0,\dots,n+1$ according to Lemma \ref{lemma:BelDra}. Thus, $\asdim\mathcal X_{n+1}\leq n+1$.
\end{proof}

On the family of all isometry classes of finite networks, the network distance $d_{\mathcal N}$ is a {\em pseudo-metric}, i.e., it satisfies the properties (M1), (M3) and (M4) stated in Definition \ref{def:distance} (\cite{ChoMem3}). Consider the equivalence relation between finite networks given by $X\sim Y$ if $d_{\mathcal N}(X,Y)=0$. We refer to \cite{ChoMem23} where this equivalence relation is completely characterised. The space obtained by quotienting the family of all isometry classes of finite networks under this equivalence relation is a metric space (it is a standard way to obtain a metric space out of a pseudo-metric space, see for example \cite[Proposition 1.1.5]{BurBurIva}). We denote it by $\mathcal N^{<\omega}$. Let us recall that the distance between two elements in $\mathcal N^{<\omega}$ is the network distance between any two representatives. For the sake of simplicity, we will be directly working with representatives of objects in $\mathcal N^{<\omega}$ instead of their equivalence classes without explicit mention. 
Let us also consider the following metric subspaces of $\mathcal N^{<\omega}$:
\begin{compactenum}[$\bullet$]
\item $\mathcal S^{<\omega}$ -- the family of equivalence 
classes of finite {\em pseudo-semi-metric spaces}, i.e., networks satisfying (M1) and (M3);
\item $\mathcal N^{\leq n}$ -- the family of equivalence classes of 
networks whose cardinality is at most $n$;
\item $\mathcal S^{\leq n}=\mathcal S^{<\omega}\cap\mathcal N^{\leq n}$.
\end{compactenum}

Define the map $\mathcal D\colon\mathcal N^{<\omega}\to[\R]^{<\omega}$ as follows: for every $(X,d)\in\mathcal N^{<\omega}$,
$$\mathcal D(X)=d(X\times X)\subseteq\R.$$
The subset $\mathcal D(X)$ is called the {\em distance set} of $X$. Note that $\mathcal D|_{\mathcal S^{\leq n}}\colon \mathcal S^{\leq n}\to\mathcal X_{n(n-1)/2}$. For the sake of simplicity, we denote by $\mathcal D$ also the restriction.

\begin{proposition}{\rm(\cite[Proposition 4.3.4]{ChoMem23})}\label{prop:distance_set_lipschitz}
The map $\mathcal D\colon\mathcal N^{<\omega}\to([\R]^{<\omega},d_H)$ is well-defined and {\em $2$-Lipschitz} (i.e., $d_H(\mathcal D(X),\mathcal D(Y))\leq 2d_{\mathcal N}(X,Y)$ for every $X,Y\in\mathcal N^{<\omega}$).    
\end{proposition}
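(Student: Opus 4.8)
The plan is to push a correspondence between the point sets forward to a correspondence between their distance sets, so that the supremum controlling the Hausdorff distance is bounded by the distortion. Concretely, fix finite networks $X$ and $Y$ and an arbitrary correspondence $\mathcal R\subseteq X\times Y$. I would define a relation $\mathcal S\subseteq\mathcal D(X)\times\mathcal D(Y)$ by
$$\mathcal S=\{(d_X(x_1,x_2),\,d_Y(y_1,y_2))\mid (x_1,y_1),(x_2,y_2)\in\mathcal R\}.$$
The idea is that a distance value realised in $X$ by a pair of points should be matched with the value realised in $Y$ by a pair of $\mathcal R$-partners, and the discrepancy between the two values is by construction at most $\dis\mathcal R$.

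The first key step is to check that $\mathcal S$ is genuinely a correspondence. Given $a\in\mathcal D(X)$, write $a=d_X(x_1,x_2)$; since $\mathcal R$ is a correspondence, there are $y_1,y_2\in Y$ with $(x_1,y_1),(x_2,y_2)\in\mathcal R$, and then $(a,d_Y(y_1,y_2))\in\mathcal S$, so every element of $\mathcal D(X)$ is related to at least one element of $\mathcal D(Y)$. The symmetric argument, starting from $b\in\mathcal D(Y)$, handles the other direction. The second key step is the distortion bound: for any $(x_1,y_1),(x_2,y_2)\in\mathcal R$ the definition of distortion gives
$$\lvert d_X(x_1,x_2)-d_Y(y_1,y_2)\rvert\leq\dis\mathcal R,$$
so every pair $(a,b)\in\mathcal S$ satisfies $\lvert a-b\rvert\leq\dis\mathcal R$, whence $\sup_{(a,b)\in\mathcal S}\lvert a-b\rvert\leq\dis\mathcal R$.

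Combining the two steps, $\mathcal S$ is an admissible correspondence for the infimum defining $d_H(\mathcal D(X),\mathcal D(Y))$, so $d_H(\mathcal D(X),\mathcal D(Y))\leq\dis\mathcal R$. As $\mathcal R$ was arbitrary, taking the infimum over all correspondences yields
$$d_H(\mathcal D(X),\mathcal D(Y))\leq\inf_{\mathcal R}\dis\mathcal R=2\,d_{\mathcal N}(X,Y),$$
which is the claim; the factor $2$ is precisely the normalising constant in the definition of the network distance. I do not expect a serious obstacle here: the computation is essentially immediate from the definitions. The only point demanding a little care is confirming that $\mathcal S$ covers \emph{both} $\mathcal D(X)$ and $\mathcal D(Y)$, i.e.\ that it is a two-sided correspondence rather than merely a relation — and this is exactly where the hypothesis that $\mathcal R$ itself is a correspondence gets used.
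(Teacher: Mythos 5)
Your proof is correct and takes essentially the same approach as the paper: both arguments push a correspondence $\mathcal R\subseteq X\times Y$ forward to match each realised distance value in $\mathcal D(X)$ with one in $\mathcal D(Y)$ (and vice versa), bounding the discrepancy by $\dis\mathcal R$. The only cosmetic difference is that the paper fixes a correspondence realising $\dis\mathcal R=2d_{\mathcal N}(X,Y)$ (legitimate since the networks are finite, so the infimum is attained), whereas you take an arbitrary $\mathcal R$ and pass to the infimum at the end --- a marginally more careful variant that would also cover infinite spaces.
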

\begin{proof}
Let $(X,d_X),(Y,d_Y)\in\mathcal N^{<\omega}$ and $\mathcal R$ be a correspondence such that $R=\dis\mathcal R=2d_{\mathcal N}(X,Y)$. For every $x_1,x_2\in X$, pick $y_1,y_2\in Y$ satisfying $(x_i,y_i)\in\mathcal R$, for $i=1,2$. Then, $\lvert d_X(x_1,x_2)-d_Y(y_1,y_2)\rvert\leq R$. Since the same argument can be carried out also for every pair of points $y_1,y_2\in Y$, 
$d_H(\mathcal D(X),\mathcal D(Y))\leq R$. This inequality also implies that the map is well-defined.
\end{proof}
Even if we restrict ourselves to consider only metric spaces that are finite subsets of the real line, the map $\mathcal D$ is not injective (\cite{Blo}). However, it has a further property that allows us to deduce an upper bound to the asymptotic dimension of $\GHs^{\leq n}$.

Let us recall that, for a positive integer $k\in\N$, a map $\varphi\colon(X,d_X)\to(Y,d_Y)$ between metric space is {\em coarsely $k$-to-$1$} (\cite{MiyVir}) if the following two properties hold:
\begin{compactenum}[$\bullet$]
\item there exists a map $\rho_+\colon\R_{\geq 0}\to\R_{\geq 0}$ such that, for every $x,y\in X$,
$$d_Y(\varphi(x),\varphi(y))\leq \rho_+(d_X(x,y))$$
(i.e., $\varphi$ is {\em bornologous});
\item for every $R\geq 0$, there exists $S\geq 0$ such that for every $y\in Y$ there are $x_1,\dots,x_k\in X$ satisfying
$$\varphi^{-1}(B_{d_Y}(y,R))\subseteq \bigcup_{i=1}^kB_{d_X}(x_i,S).$$
\end{compactenum}
Coarsely $k$-to-$1$ maps play a very important role in providing bounds for the asymptotic dimension.
\begin{theorem}[\cite{MiyVir,DydVir}, see also \cite{DydWei}]\label{theo:coarsely_k_to_1}
Let $\varphi\colon X\to Y$ be a coarsely $k$-to-$1$ surjective map between metric spaces. Then,
$$\asdim X\leq\asdim Y\leq k(\asdim X+1)-1.$$
\end{theorem}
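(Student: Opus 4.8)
The plan is to establish the two inequalities separately, pulling covers back through $\varphi$ for the lower bound and pushing them forward for the upper bound; in both cases the coarsely $k$-to-$1$ condition is exactly what repairs the defect caused by $\varphi$ failing to be a coarse equivalence.

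For $\asdim X\le\asdim Y$, set $n=\asdim Y$ and fix $r>0$. Using the bornologous function $\rho_+$, I would choose a uniformly bounded cover $\mathcal V=\mathcal V_0\cup\cdots\cup\mathcal V_n$ of $Y$ with each $\mathcal V_i$ being $L$-disjoint for some $L\ge\rho_+(r)$, and pull it back to $\mathcal U_i=\{\varphi^{-1}(V)\mid V\in\mathcal V_i\}$. Bornologousness forces each $\mathcal U_i$ to be $r$-disjoint: if $x\in\varphi^{-1}(V)$ and $x'\in\varphi^{-1}(V')$ satisfy $d_X(x,x')\le r$ then $d_Y(\varphi(x),\varphi(x'))\le\rho_+(r)\le L$, contradicting $\dist(V,V')>L$. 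The only property that can fail is uniform boundedness, since $\varphi^{-1}(V)$ need not be bounded; here the coarsely $k$-to-$1$ condition gives $\varphi^{-1}(V)\subseteq\bigcup_{l=1}^k B_{d_X}(x_l,S)$ for a uniform $S$. I would then refine $\mathcal U_i$ by replacing each set with its $r$-components: this preserves $r$-disjointness, because distinct $r$-components of a single set are automatically more than $r$ apart, and a short graph-connectivity argument on the at most $k$ covering balls bounds the diameter of every $r$-component by $2S+(k-1)(2S+r)$, uniformly. This yields the required $(n+1)$-colored, uniformly bounded, $r$-disjoint cover of $X$.

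The inequality $\asdim Y\le k(\asdim X+1)-1$ is the heart of the statement, and the key is to choose the right characterization of asymptotic dimension. Pushing forward an $r$-disjoint decomposition does not work (the collapsing of $\varphi$ destroys $r$-disjointness), and neither does controlling a Lebesgue number of the image (the image of a bounded set cannot contain a full $Y$-ball once the fibres spread over several sheets). Instead I would use the standard equivalent formulation (\cite{BelDra}) that $\asdim Y\le d$ iff for every $R>0$ there is a uniformly bounded cover of $Y$ in which every ball of radius $R$ meets at most $d+1$ members. Setting $m=\asdim X$ and fixing $R>0$, let $S=S(R)$ be the constant from the coarsely $k$-to-$1$ condition; choose a uniformly bounded cover $\mathcal U=\mathcal U_0\cup\cdots\cup\mathcal U_m$ of $X$ with each $\mathcal U_i$ being $\rho$-disjoint for some $\rho>2S$, and push it forward to $\mathcal W=\{\varphi(U)\mid U\in\mathcal U\}$, which covers $Y$ by surjectivity and is uniformly bounded since $\varphi$ is bornologous.

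It then remains to count, for a fixed $y\in Y$, the members of $\mathcal W$ meeting $B_{d_Y}(y,R)$. Any such member $\varphi(U)$ forces $U$ to meet $\varphi^{-1}(B_{d_Y}(y,R))\subseteq\bigcup_{l=1}^k B_{d_X}(x_l,S)$. Since each ball $B_{d_X}(x_l,S)$ has diameter at most $2S<\rho$, it can meet at most one member of each $\rho$-disjoint family $\mathcal U_i$; hence at most $k$ members of $\mathcal U_i$, and at most $k(m+1)$ members of $\mathcal U$ altogether, have images meeting $B_{d_Y}(y,R)$. As $R$ was arbitrary, the chosen characterization gives $\asdim Y\le k(m+1)-1$. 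I expect the main obstacle to be precisely this choice of the $R$-multiplicity characterization together with the coupling $\rho>2S$: this is the step that converts the ``$k$ sheets'' of $\varphi$ into the multiplicative factor $k$, and it is where the naive pushforward arguments break down.
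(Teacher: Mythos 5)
Note first that the paper contains no proof of Theorem \ref{theo:coarsely_k_to_1}: it is imported as a black box from \cite{MiyVir,DydVir} (see also \cite{DydWei}), so your attempt can only be compared against the literature. Measured that way, your argument is correct, and it is essentially the standard one. For $\asdim X\leq\asdim Y$, the pullback-plus-$r$-components device works: since each $V\in\mathcal V$ satisfies $V\subseteq B_{d_Y}(y,R_0)$ for any $y\in V$ and a uniform $R_0$, the coarsely $k$-to-$1$ condition applied with $R_0$ places $\varphi^{-1}(V)$ inside a union of $k$ balls of radius $S=S(R_0)$, and your chain-compression estimate $k\cdot 2S+(k-1)r=2S+(k-1)(2S+r)$ for the diameter of an $r$-component is exactly right (compress the $r$-chain so that each of the $\leq k$ balls is visited once). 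For the upper bound, your two key choices --- the characterization that $\asdim Y\leq d$ iff for every $R>0$ there is a uniformly bounded cover each of whose members' $R$-neighbourhoods have multiplicity at most $d+1$ (one of the standard equivalences in \cite{BelDra}), and the coupling $\rho>2S$ so that each of the $k$ balls covering $\varphi^{-1}(B_{d_Y}(y,R))$ meets at most one member of each $\rho$-disjoint family --- are precisely the mechanism behind Miyata and Virk's large-scale dimension-raising theorem, and the count $k(m+1)$ follows as you say, with repeated images $\varphi(U)=\varphi(U')$ only helping. Two minor points you should make explicit: $\rho_+$ must be taken nondecreasing (a harmless normalization of the control function) so that the pushforward cover $\mathcal W$ is uniformly bounded, and surjectivity of $\varphi$ is used only in the second inequality (to ensure $\mathcal W$ covers $Y$); the first inequality needs only the coarsely $k$-to-$1$ condition, which is what rules out counterexamples like coordinate projections or the distance-to-basepoint map on the hyperbolic plane.
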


\begin{theorem}\label{theo:coarsely_k_to_1_appl}
The map $\mathcal D\colon \mathcal S^{\leq n}\to\mathcal D(\mathcal S^{\leq n})=\mathcal X_{n(n-1)/2}
$ is coarsely $k$-to-$1$ for some suitable $k$.
\end{theorem}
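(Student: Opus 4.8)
The plan is to verify the two defining properties of a coarsely $k$-to-$1$ map separately. The bornologous condition is free: Proposition \ref{prop:distance_set_lipschitz} already shows $\mathcal D$ is $2$-Lipschitz, so $\rho_+(t)=2t$ witnesses it. All the work therefore goes into the second condition, namely producing, for each $R\geq 0$, a radius $S$ and a uniform bound $k$ so that for every $D_0\in\mathcal X_{n(n-1)/2}$ the preimage $\mathcal D^{-1}(B_{d_H}(D_0,R))$ is covered by $k$ balls of radius $S$. The idea I would push is that, once the distance \emph{set} of a space is pinned within $R$ of $D_0$, that space is determined up to a network-distance perturbation of size $O(R)$ by finite combinatorial data whose number of possible values depends only on $n$.

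Concretely, I would fix $D_0=\{0=d_0,d_1,\dots,d_m\}$ with $m\leq n(n-1)/2$ and take any $(X,d_X)\in\mathcal D^{-1}(B_{d_H}(D_0,R))$ on points $x_1,\dots,x_p$ with $p\leq n$. Since $d_H(\mathcal D(X),D_0)\leq R$, every value $d_X(x_i,x_j)$ lies within $R$ of $D_0$, so I may choose a label $\ell(i,j)\in\{0,\dots,m\}$ with $\lvert d_X(x_i,x_j)-d_{\ell(i,j)}\rvert\leq R$. The pair $(p,\ell)$, where $\ell$ ranges over symmetric labelings of the $\binom{p}{2}$ pairs by elements of $D_0$, is the combinatorial datum. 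To each such datum I associate the center space $C_{(p,\ell)}\in\mathcal S^{\leq n}$ on points $c_1,\dots,c_p$ with $d_C(c_i,c_j)=d_{\ell(i,j)}$; this is a genuine pseudo-semi-metric space, since only (M1) and (M3) are required. The bijective correspondence $\mathcal R=\{(x_i,c_i)\mid 1\leq i\leq p\}$ then has $\dis\mathcal R=\max_{i,j}\lvert d_X(x_i,x_j)-d_{\ell(i,j)}\rvert\leq R$ (the diagonal pairs contribute $0$, so $\mathcal R$ is a legitimate correspondence), whence $d_{\mathcal N}(X,C_{(p,\ell)})\leq R/2$. Thus $S=R/2$ works, and the number of data is bounded by $\sum_{p=1}^n(m+1)^{\binom{p}{2}}\leq n\,(n(n-1)/2+1)^{n(n-1)/2}=:k$, a quantity depending only on $n$.

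The conceptual heart of the argument — and the one point I would be careful to flag — is that the covering centers are allowed to depend on $D_0$: the definition of coarsely $k$-to-$1$ only requires $k$ and $S$ to be uniform in $y$, while the points $x_1,\dots,x_k$ may vary with $y$. This is exactly what lets the otherwise continuous families of spaces sharing a fixed combinatorial pattern collapse to finitely many representatives $C_{(p,\ell)}$, each tethered to the specific values $d_i$ of the chosen $D_0$. The only routine checks remaining are that passing to isometry classes can only identify centers and so preserves the bound $k$, and that $m\leq n(n-1)/2$ holds because $D_0\in\mathcal X_{n(n-1)/2}$ has at most $n(n-1)/2$ nonzero entries; neither affects the estimates above.
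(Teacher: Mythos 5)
Your proposal is correct and takes essentially the same route as the paper: your centre spaces $C_{(p,\ell)}$ are exactly the paper's rounded spaces $X_Y=(Y,f\circ d_Y)$, obtained by snapping each distance value to a nearby element of the target distance set, with the same estimate $d_{\mathcal N}(X,C)\leq R/2$ via a distortion-$R$ correspondence and essentially the same count $\bigl(n(n-1)/2+1\bigr)^{n(n-1)/2}$. Your extra factor of $n$ from stratifying by cardinality, and your explicit remarks about the diagonal labels and the dependence of the centres on $D_0$, are harmless refinements of the published argument.
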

\begin{proof}
Proposition \ref{prop:distance_set_lipschitz} implies that the map is bornologous.

Let us fix $R\in\R_{\geq 0}$ and $D\in\mathcal D(\mathcal S^{\leq n})=\mathcal X_{n(n-1)/2}$. In particular, $0\in D$. Suppose that $(Y,d_Y)\in\mathcal S^{\leq n}$ satisfies $d_H(D,\mathcal D(Y))\leq R$. Then, there exists a function $f\colon\mathcal D(Y)\to D$ such that $\lvert f(a)-a\rvert\leq R$ for every $a\in\mathcal D(Y)$. Furthermore, without loss of generality, we can require that $f(0)=0$ (note that $0\in\mathcal D(Y)$). 

Define a new object $X_Y\in\mathcal S^{\leq n}$ as follows: $X_Y=(Y,f\circ d_Y)$. The following properties hold:
\begin{compactenum}[(a)]
\item $X_Y\in\mathcal S^{\leq n}$;
\item $\mathcal D(X_Y)\subseteq D$;
\item $d_{\mathcal N}(Y,X_Y)\leq\frac{1}{2}\dis id\leq R/2$.
\end{compactenum}

Items (a) and (b) imply that
$$\lvert\{X_Y\mid Y\in\mathcal S^{\leq n}:d_{H}(\mathcal D(Y),D)\leq R\}\rvert\leq\lvert D\rvert^{\frac{n(n-1)}{2}}\leq \bigg(\frac{n(n-1)}{2}+1\bigg)^{\frac{n(n-1)}{2}}=:k.$$
Hence, $\mathcal D$ is a coarsely $k$-to-$1$ map.
\end{proof}

\begin{corollary}\label{coro:asdimGHn}
For every $n\in\N$, $\asdim\GHs^{\leq n}=\asdim\mathcal S^{\leq n}=n(n-1)/2$.
\end{corollary}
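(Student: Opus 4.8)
The plan is to assemble Corollary \ref{coro:asdimGHn} from the ingredients already established, handling the three claimed equalities by a chain of inequalities. First I would observe that the inclusion $\EHs_1^{<\omega}\hookrightarrow\GHs^{<\omega}$ is bi-Lipschitz (Theorem \ref{thm:biLipschitz}), and more relevantly that the network distance agrees with $\GH$ on metric spaces, so $\GHs^{\leq n}$ sits inside $\mathcal S^{\leq n}$ as a subspace. To compare the two, note that every finite pseudo-semi-metric space has a metric quotient (identifying points at distance $0$ and, if needed, repairing the triangle inequality is not necessary here since we only need a coarse comparison), so $\GHs^{\leq n}$ is large in $\mathcal S^{\leq n}$ up to a bounded distortion; by Proposition \ref{prop:asdim_monotone} this will force $\asdim\GHs^{\leq n}=\asdim\mathcal S^{\leq n}$. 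This identification of the two ambient spaces is the conceptual glue and I expect it to require a short but careful argument about how isometry classes of networks relate to those of genuine metric spaces.

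Next I would pin down the upper bound $\asdim\mathcal S^{\leq n}\leq n(n-1)/2$. This is where Theorem \ref{theo:coarsely_k_to_1_appl} and Theorem \ref{theo:coarsely_k_to_1} combine: the distance-set map $\mathcal D\colon\mathcal S^{\leq n}\to\mathcal X_{n(n-1)/2}$ is coarsely $k$-to-$1$ and surjective, so Theorem \ref{theo:coarsely_k_to_1} gives
\begin{equation*}
\asdim\mathcal S^{\leq n}\leq\asdim\mathcal X_{n(n-1)/2}.
\end{equation*}
Lemma \ref{lemma:asdimX_n} then caps the right-hand side at $n(n-1)/2$. Here I must verify that $\mathcal X_{n(n-1)/2}$ is exactly the image $\mathcal D(\mathcal S^{\leq n})$: a metric (or pseudo-semi-metric) space on at most $n$ points has at most $\binom{n}{2}=n(n-1)/2$ pairwise distances besides the diagonal, all nonnegative, and the distance set always contains $0$, so it lands in $[\R_{\geq 0}]^{\leq n(n-1)/2+1}$ with $0$ as a distinguished element — precisely the defining conditions of $\mathcal X_{n(n-1)/2}$.

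For the matching lower bound I would invoke Lemma \ref{lemma:asdim_geq}, which gives $\asdim\GHs^{\leq n}\geq n(n-1)/2$ directly via the isometric copies of large cubes supplied by \cite{IliIvaTuz}. Combining this with the upper bound and the monotonicity-plus-largeness identification of $\GHs^{\leq n}$ and $\mathcal S^{\leq n}$ closes the loop: every quantity in the chain
\begin{equation*}
\tfrac{n(n-1)}{2}\leq\asdim\GHs^{\leq n}\leq\asdim\mathcal S^{\leq n}\leq\asdim\mathcal X_{n(n-1)/2}\leq\tfrac{n(n-1)}{2}
\end{equation*}
is squeezed to $n(n-1)/2$. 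The main obstacle I anticipate is not any single estimate but rather the bookkeeping in the first step: ensuring that passing between networks, pseudo-semi-metric spaces, and honest metric spaces does not change the asymptotic dimension, and that the coarsely $k$-to-$1$ machinery applies to the $\mathcal S^{\leq n}$ picture rather than only to $\GHs^{\leq n}$. Once that equivalence is secured, the two dimensional estimates are exactly the two lemmas already proved, and Theorem \ref{theo:theoA} follows as the special case where the bound $n(n-1)/2$ is read off for the Gromov-Hausdorff space itself.
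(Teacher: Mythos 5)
Your closing chain of inequalities is exactly the paper's proof: Lemma \ref{lemma:asdim_geq} gives $n(n-1)/2\leq\asdim\GHs^{\leq n}$, Proposition \ref{prop:asdim_monotone}(a) gives $\asdim\GHs^{\leq n}\leq\asdim\mathcal S^{\leq n}$ since $\GHs^{\leq n}$ is a subspace of $\mathcal S^{\leq n}$, Theorems \ref{theo:coarsely_k_to_1_appl} and \ref{theo:coarsely_k_to_1} give $\asdim\mathcal S^{\leq n}\leq\asdim\mathcal X_{n(n-1)/2}$, and Lemma \ref{lemma:asdimX_n} caps the latter at $n(n-1)/2$; the squeeze then delivers both equalities simultaneously. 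So the proof you assemble is correct and coincides with the paper's.

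However, the step you single out as ``the conceptual glue'' --- that $\GHs^{\leq n}$ is large in $\mathcal S^{\leq n}$ because every finite pseudo-semi-metric space has a nearby metric quotient, so that Proposition \ref{prop:asdim_monotone}(b) identifies the two dimensions --- is false, and fortunately your argument never actually uses it. Elements of $\mathcal S^{\leq n}$ satisfy only (M1) and (M3), so the triangle inequality can fail arbitrarily badly, and this is not repaired by identifying points at distance $0$. Concretely, for $n\geq 3$ take $X=\{x,y,z\}$ with $d(x,y)=d(y,z)=1$ and $d(x,z)=N$. If $Y$ is a genuine metric space with $d_{\mathcal N}(X,Y)\leq r$, pick a correspondence $\mathcal R$ with $\dis\mathcal R\leq 2r$ and points $x^\prime,y^\prime,z^\prime\in Y$ corresponding to $x,y,z$; then $d_Y(x^\prime,z^\prime)\geq N-2r$ while $d_Y(x^\prime,y^\prime),d_Y(y^\prime,z^\prime)\leq 1+2r$, and the triangle inequality in $Y$ forces $N\leq 2+6r$. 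Hence points of $\mathcal S^{\leq n}$ with $N$ large lie arbitrarily far from $\GHs^{\leq n}$, which is therefore not large, and part (b) of Proposition \ref{prop:asdim_monotone} is unavailable. The equality $\asdim\GHs^{\leq n}=\asdim\mathcal S^{\leq n}$ is instead a by-product of the squeeze itself, using only subspace monotonicity (a). One further point worth making explicit: Theorem \ref{theo:coarsely_k_to_1} needs $\mathcal D$ to be \emph{surjective} onto $\mathcal X_{n(n-1)/2}$, and your verification only checks the containment $\mathcal D(\mathcal S^{\leq n})\subseteq\mathcal X_{n(n-1)/2}$; surjectivity holds because any $D\subseteq\R_{\geq 0}$ with $0\in D$ and $\lvert D\rvert\leq n(n-1)/2+1$ is realised by assigning the values of $D\setminus\{0\}$ to distinct pairs of an $n$-point set and $0$ to all remaining pairs --- admissible precisely because (M2) and (M4) are not required in $\mathcal S^{\leq n}$.
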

\begin{proof}
The claim is implied by the following chain of inequalities:
$$\frac{n(n-1)}{2}\leq\asdim\GHs^{\leq n}\leq\asdim\mathcal S^{\leq n}\leq\asdim\mathcal D(\mathcal S^{\leq n})=\asdim\mathcal X_{\frac{n(n-1)}{2}}\leq\frac{n(n-1)}{2},$$
where the first inequality is stated in Lemma \ref{lemma:asdim_geq}, the second one follows from Proposition \ref{prop:asdim_monotone}(a), the third one from Theorem \ref{theo:coarsely_k_to_1}, and the last from Lemma \ref{lemma:asdimX_n}.
\end{proof}

Thus, Theorem \ref{theo:theoA} follows since having finite asymptotic dimension implies the existence of a coarse embedding into a Hilbert space (\cite{Yu_CE,HigRoe}). 


\begin{remark}
Similarly, we can show that $\mathcal D\colon\mathcal N^{\leq n}\to[\R]^{n^2}$ is 
coarsely $(n^2)^{n^2}$-to-$1$ surjective map and so $\asdim\mathcal N^{\leq n}\leq\asdim[\R]^{\leq n^2}\leq n^2$ by Theorem \ref{theo:coarsely_k_to_1} and Lemma \ref{lemma:Rad_Shu}. Hence, $n(n-1)/2\leq\asdim\mathcal N^{\leq n}\leq n^2$. 
In particular, $\mathcal N^{\leq n}$ can be coarsely embedded into a Hilbert space.
\end{remark}    


\section{Coarse non-embeddability into Hilbert spaces}\label{sec:non_coarse_emb}

Let us now recall some definitions and results coming from coarse geometry as they will be the crucial stepping stones to prove our main results.

Suppose that the map $\psi\colon(X,d_X)\to(Y,d_Y)$ between metric spaces is a coarse embedding. If $\rho_-\colon\R_{\geq 0}\to\R_{\geq 0}$ and $\rho_+\colon\R_{\geq 0}\to\R_{\geq 0}$ are two maps such that $\rho_-\to\infty$ and \eqref{eq:rho-_rho+} is fulfilled, then we call $\rho_-$ and $\rho_+$ {\em control functions}, and we say that $\psi$ is a {\em $(\rho_-,\rho_+)$-coarse embedding}. 

Let $\{X_k\}_{k\in\N}$ be a sequence of metric spaces and $X$ be another metric space. We say that a family of maps $\{i_k\colon X_k\to X\}_{k\in\N}$ is a {\em coarse embedding} if there exist two maps $\rho_-,\rho_+\colon\R_{\geq 0}\to\R_{\geq 0}$ such that $i_k$ is a $(\rho_-,\rho_+)$-coarse embedding for every $k\in\N$.
We say that $X$ {\em contains a coarse disjoint union of $\{X_k\}_{k\in\N}$} if there exists a coarse embedding $\{i_k\colon X_k\to X\}$ such that
$$\dist(i_n(X_n),i_m(X_m))\xrightarrow{m+n\to\infty}\infty.$$

Let us recall that a Banach space $(A,\lvert\lvert\cdot\rvert\rvert)$ is {\em uniformly convex} if, for every $0<\varepsilon\leq 2$, there exists $\delta>0$ so that, for any two vectors $x,y\in A$ with $\lvert\lvert x\rvert\rvert=\lvert\lvert y\rvert\rvert=1$, the condition $\lvert\lvert x- y\rvert\rvert\geq\varepsilon$ implies that $\lvert\lvert(x+y)/2\rvert\rvert\leq 1-\delta$. Hilbert spaces are, in particular, uniformly convex Banach spaces.
\begin{theorem}[\cite{Laf}]\label{thm:Lafforgue}
	There exists a sequence $\{X_k\}_{k\in\N}$ of finite metric spaces such that, if a metric space $X$ contains a coarse disjoint union of $\{X_k\}_{k\in\N}$, then $X$ cannot be coarsely embedded into any uniformly convex Banach space.
\end{theorem}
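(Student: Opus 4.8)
This is a deep result of Lafforgue, and the honest route is to cite the construction in \cite{Laf}; what follows is how I would organise a proof. The statement naturally splits into a routine transfer argument and a hard construction, and I would treat them separately. The plan is to reduce the full claim to the single assertion that some sequence of finite metric spaces has a coarse disjoint union that is not coarsely embeddable into any uniformly convex Banach space, and then to produce that sequence.

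For the transfer, suppose $X$ contains a coarse disjoint union of $\{X_k\}_{k\in\N}$, witnessed by maps $i_k\colon X_k\to X$ that are simultaneously $(\rho_-,\rho_+)$-coarse embeddings (with one pair of control functions for all $k$) and satisfy $\dist(i_n(X_n),i_m(X_m))\to\infty$ as $n+m\to\infty$. First I would assemble these into a single coarse embedding of the abstract coarse disjoint union $\coprod_{k}X_k$ — the disjoint union carrying the original metric inside each $X_k$ and with inter-component distances tending to infinity — into $X$; the divergence condition is exactly what upgrades the individual $(\rho_-,\rho_+)$-embeddings into one honest coarse embedding of the whole union. Then, using that coarse embeddings are closed under composition (immediate by composing the two sets of control-function inequalities \eqref{eq:rho-_rho+}), any coarse embedding of $X$ into a uniformly convex Banach space $B$ would compose with the previous map to give a coarse embedding $\coprod_k X_k\hookrightarrow B$. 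Hence the theorem reduces to producing a sequence whose coarse disjoint union admits no coarse embedding into any uniformly convex Banach space.

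For the construction, I would exhibit a sequence of finite graphs $X_k$, equipped with their shortest-path metrics, forming a \emph{super-expander}: a family satisfying a nonlinear, $B$-valued Poincar\'e inequality that holds uniformly across the entire class of uniformly convex Banach spaces. Concretely, the target is an inequality of the shape
$$\frac{1}{\lvert X_k\rvert^2}\sum_{x,y\in X_k}\lVert f(x)-f(y)\rVert^2\ \leq\ \frac{C}{\lvert E_k\rvert}\sum_{\{x,y\}\in E_k}\lVert f(x)-f(y)\rVert^2,$$
valid for every $B$-valued function $f$ on $X_k$, with $C$ independent of $k$ and depending on $B$ only through a fixed modulus of convexity. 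Given such a sequence, a standard averaging argument confronts the uniform lower control $\rho_-$ (which forces the left-hand side to grow, since the diameters of the $X_k$ diverge) with the uniform upper control $\rho_+$ (which keeps the right-hand side bounded across edges), and the inequality then contradicts $\rho_-\to\infty$; so no coarse embedding into $B$ can exist.

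The hard part will be the existence of such a sequence. For ordinary expanders, non-embeddability into Hilbert space is immediate from the linear spectral gap of the graph Laplacian, but that argument is intrinsically Hilbertian and does not survive passage to arbitrary uniformly convex targets. What is needed is a \emph{nonlinear} spectral gap that is stable under the relevant operations. Here I would appeal to Lafforgue's input — strong Banach property (T) for suitable higher-rank lattices or algebraic groups over local fields, which yields exactly such uniform Poincar\'e inequalities for the associated Cayley and quotient graphs — or, alternatively, to the Mendel--Naor zigzag construction of super-expanders via metric cotype. Establishing this uniform nonlinear spectral gap is the entire difficulty of the theorem; once it is in hand, the averaging estimate and the composition-of-embeddings reduction above are bookkeeping.
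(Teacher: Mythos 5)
The paper does not prove this statement at all: Theorem \ref{thm:Lafforgue} is imported as a black box from \cite{Laf}, which is precisely the ``honest route'' you identify at the outset. Your outline --- the routine reduction via composing control functions (so that a coarse embedding of $X$ into a uniformly convex $B$ would yield uniform coarse embeddings $X_k\to B$), followed by the genuinely hard input of a super-expander sequence satisfying $B$-valued Poincar\'e inequalities with constants depending only on the modulus of convexity, sourced from Lafforgue's strong Banach property (T) or alternatively the Mendel--Naor zigzag construction --- is an accurate summary of the standard argument, and is therefore consistent with, and strictly more informative than, what the paper itself provides.
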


The goal of this section is to prove the following result.
\begin{theorem}\label{thm:EH_not_emb}
$\mathcal{EH}_1^{<\omega}$ cannot be coarsely embedded into any uniformly convex Banach space.
\end{theorem}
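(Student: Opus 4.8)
The plan is to reduce everything to Lafforgue's theorem (Theorem \ref{thm:Lafforgue}): it suffices to show that $\EHs_1^{<\omega}$ contains a coarse disjoint union of his sequence $\{X_k\}_{k\in\N}$ of finite metric spaces. I may assume each $X_k$ is a finite graph equipped with its integer-valued path metric, with $\lvert X_k\rvert\to\infty$. For each $k$ I will encode the points of $X_k$ as finite subsets of $\R$ in a way that is controlled, \emph{uniformly in $k$}, by the internal metric of $X_k$, and then separate the images of distinct $X_k$ by means of an isometry invariant.

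The encoding is a Kuratowski-type map with scale separation. Fix $k$, enumerate $X_k=\{p_1,\dots,p_{m_k}\}$, and choose a scale $L_k>2\diam X_k$. Let $S\subset\R$ be a fixed finite \emph{anchor} near the origin, the same for every $i$ and every $k$, designed to be rigid: its gaps are asymmetric and incommensurable with the integer scale, so that no translation or reflection carries $S$ close to itself or onto the block region except the identity, up to a controlled error. Define
$$i_k(p_i)=S\cup\{\,jL_k+d(p_i,p_j)\mid j=1,\dots,m_k\,\}\in\EHs_1^{<\omega},$$
placing the blocks far to the right of $S$. Since $L_k>2\diam X_k$ the blocks do not overlap, and a direct computation shows that, \emph{for the plain Hausdorff distance},
$$d_H(i_k(p_i),i_k(p_{i'}))=\max_j\lvert d(p_i,p_j)-d(p_{i'},p_j)\rvert=d(p_i,p_{i'}),$$
the final equality following from the triangle inequality (the maximum is attained at $j=i'$). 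As $\EH\leq d_H$ always holds, this already furnishes the upper control function $\rho_+(t)=t$.

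The heart of the argument, and the step I expect to be the main obstacle, is the lower bound: I must show that no isometry $g\in\Iso(\R)$, that is $x\mapsto\pm x+c$, can substantially reduce the distance, i.e. $\EH(i_k(p_i),i_k(p_{i'}))\geq d(p_i,p_{i'})-C$ for a constant $C$ independent of $k$. If the minimizing $g$ already yields $d_H(i_k(p_i),g\,i_k(p_{i'}))\geq\diam X_k$ we are done, since $d(p_i,p_{i'})\leq\diam X_k$. Otherwise the error $\delta:=d_H(i_k(p_i),g\,i_k(p_{i'}))<\diam X_k<L_k/2$ is small relative to the scale separation; then the fine, asymmetric anchor $S$ inside $i_k(p_i)$ can only be matched by $gS$ and not by the integer-spaced blocks far away, which by the rigidity of $S$ forces $g$ to lie within a fixed distance $C$ of the identity. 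Realigning by this near-identity $g$ perturbs the block-by-block matching by at most $C$, so $\delta\geq d_H(i_k(p_i),i_k(p_{i'}))-C=d(p_i,p_{i'})-C$. Taking $\rho_-(t)=\max\{0,t-C\}$, which is independent of $k$ and tends to infinity, exhibits each $i_k$ as a $(\rho_-,\rho_+)$-coarse embedding. This uniform rigidity estimate, adapted from the constructions of \cite{WeiYamZav}, is the technical crux.

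Finally, to upgrade to a coarse \emph{disjoint} union I separate distinct components using the diameter, which is an isometry invariant obeying $\lvert\diam U-\diam V\rvert\leq 2\,\EH(U,V)$ for all $U,V\in\EHs_1^{<\omega}$. Each set $i_k(p_i)$ has diameter in a narrow band around $m_kL_k$, since the data offsets are bounded by $\diam X_k\ll L_k$ and perturb it only slightly. Choosing the scales $L_k$ so that the centres $m_kL_k$ grow rapidly and the bands for distinct $k$ are far apart, I obtain $\dist(i_k(X_k),i_{k'}(X_{k'}))\geq\tfrac12\lvert m_kL_k-m_{k'}L_{k'}\rvert-O(\diam X_k+\diam X_{k'})\to\infty$ for distinct components as $k+k'\to\infty$. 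Hence $\EHs_1^{<\omega}$ contains a coarse disjoint union of $\{X_k\}_{k\in\N}$, and Theorem \ref{thm:Lafforgue} yields the conclusion.
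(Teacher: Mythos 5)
Your overall architecture coincides with the paper's: reduce to Lafforgue's Theorem \ref{thm:Lafforgue}, encode Kuratowski-type coordinates (Lemma \ref{lemma:Kuratowski}) as widely separated blocks on the real line, and separate components by the isometry-invariant diameter (Lemma \ref{lemma:dist_via_diameter}). But the rigidity step --- which you correctly identify as the crux --- contains two genuine errors. First, the claim that matching the anchor forces the optimal isometry $g$ to lie within a \emph{fixed} distance $C$ of the identity is false for translations: if $g(x)=x+c$, then $d_H(S,gS)=\lvert c\rvert$, so the anchor only yields $\lvert c\rvert\le\delta$, where $\delta$ is the achieved Hausdorff distance --- and in your second branch $\delta$ may be as large as $\diam X_k\to\infty$. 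A translation by $c\approx d(p_i,p_{i'})/2$ matches the anchor within $\delta$ while halving the block mismatch, so your lower bound $\rho_-(t)=\max\{0,t-C\}$ is unattainable; the honest output of the trade-off ($\delta\ge\lvert c\rvert$ and $\delta\ge d(p_i,p_{i'})-\lvert c\rvert$) is $\rho_-(t)=t/2$, which is exactly the control function the paper establishes in Lemma \ref{lemma:varphi}, and which still suffices for a coarse embedding. That part is a one-line repair.

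Second, and not repairable within your setup: a \emph{fixed} finite anchor $S$ cannot exclude reflections. The Hausdorff matching does not pair $S$ with $gS$: each point of $S$ merely has to be $\delta$-close to \emph{some} point of $g\,i_k(p_{i'})$, and points of the reflected anchor may be absorbed by blocks. Concretely, for a reflection $g(x)=-x+c$ with $c\approx m_kL_k+u$ and $\lvert u\rvert$ moderate, the reflected anchor lands near the last block of $i_k(p_i)$ and the reflected last block lands near $S$, so the anchor constraint becomes vacuous once $\delta\ge\diam S$; ruling out such $g$ then requires an estimate on the reversed block offsets, which the asymmetry and incommensurability of $S$ --- structure at the fixed scale $\diam S=O(1)$, invisible at tolerance $\delta$ --- cannot provide. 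This is precisely why the paper's rigidity data scale with the problem: the anchors $a_i(m)=4m(i-1)$ are spaced at the data scale $m$, the extra point satisfies $D(m,n)\ge a_n(m)+12m$, and Claim \ref{claim:no_rotation} exploits these scale-$m$ gaps to derive the contradiction $11m\le 7m$ for a nontrivial reflection. To close the gap you would have to let the anchor's geometry grow with $\diam X_k$ (at which point you essentially reproduce the paper's construction) and, independently, weaken your claimed $\rho_-(t)=t-C$ to $t/2$.
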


Hence, Theorem \ref{theo:theoB} 
immediately follows thanks to Theorem \ref{thm:biLipschitz} since a composite of coarse embeddings is still a coarse embedding.

To prove Theorem \ref{thm:EH_not_emb}, we intend to apply Theorem \ref{thm:Lafforgue}. Following the approach used in \cite{WeiYamZav}, let us first isometrically embed an arbitrary finite metric space into a more manageable space.
	
\begin{lemma}[Kuratowski embedding]\label{lemma:Kuratowski}
For every finite metric space $X$, there are $m,n\in\N\setminus\{0\}$ such that $X$ can be isometrically embedded into $([0,m]^n,d_m^n)$, where $d_m^n((x_i)_i,(y_i)_i)=\max_{i=1,\dots,n}\lvert x_i-y_i\rvert$ for every $(x_i)_i,(y_i)_i\in[0,m]^n$.
\end{lemma}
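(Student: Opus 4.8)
The plan is to use the classical Fr\'echet--Kuratowski embedding, which for a finite space is concrete: embed $X$ into a sup-normed cube using the distance functions to the points of $X$ itself. Enumerate $X=\{x_1,\dots,x_n\}$, where $n=\lvert X\rvert\geq 1$, and define $\varphi\colon X\to\R^n$ by
$$\varphi(x)=\big(d(x,x_1),\,d(x,x_2),\,\dots,\,d(x,x_n)\big).$$
The goal is to show that $\varphi$ is an isometry onto its image when $\R^n$ carries the sup metric, and then to observe that the image lands inside a cube $[0,m]^n$ for a suitable integer $m$.

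The core of the argument is the isometry verification, which splits into the two inequalities. For the upper bound, the triangle inequality gives $\lvert d(x,x_i)-d(y,x_i)\rvert\leq d(x,y)$ for every index $i$, hence $\max_{i}\lvert d(x,x_i)-d(y,x_i)\rvert\leq d(x,y)$, so $d_m^n(\varphi(x),\varphi(y))\leq d(x,y)$. For the matching lower bound, one chooses the coordinate $i$ with $x_i=x$: there $\lvert d(x,x)-d(y,x)\rvert=d(x,y)$, so the maximum over coordinates is \emph{attained} and equals $d(x,y)$. This is the one genuinely essential point, since it is what upgrades $\varphi$ from a $1$-Lipschitz map to an isometric embedding; everything else is bookkeeping.

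It remains to fit the image into the prescribed cube. Since every coordinate $d(x,x_i)$ is nonnegative and bounded above by $\diam X$, the image of $\varphi$ is contained in $[0,\diam X]^n$. Choosing any integer $m\geq\diam X$ with $m\geq 1$ (for instance $m=\max\{1,\lceil\diam X\rceil\}$, which also covers the degenerate single-point case where $\diam X=0$) yields $m,n\in\N\setminus\{0\}$ and $\varphi(X)\subseteq[0,m]^n$. Thus $\varphi\colon X\to([0,m]^n,d_m^n)$ is the required isometric embedding. I do not expect any real obstacle here: finiteness of $X$ makes both the dimension $n$ and the side length $m$ automatically finite, and the only subtlety worth stating explicitly is the attainment of equality in the lower bound together with the trivial adjustment ensuring $m\geq 1$.
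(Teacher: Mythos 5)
Your proof is correct, and it is exactly the classical Fr\'echet--Kuratowski argument that the paper itself relies on: the paper states Lemma \ref{lemma:Kuratowski} without proof, as a known folklore fact, so your write-up (the map $x\mapsto(d(x,x_i))_i$, the triangle-inequality upper bound, attainment of the lower bound at the coordinate $x_i=x$, and the choice $m\geq\max\{1,\lceil\diam X\rceil\}$) supplies precisely the standard proof the paper leaves implicit. There is nothing to correct.
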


Therefore, in order to apply Theorem \ref{thm:Lafforgue}, we show that $\mathcal{EH}_1^{<\omega}$ 
contains a coarse disjoint union of the family $\{[0,m]^n\mid m,n\in\N\setminus\{0\}\}$. We intend to define the coarse embeddings $\varphi_m^n\colon[0,m]^n\to[\R]^{<\omega}$ recursively. To do it, 
let us fix a bijection $T\colon(\N\setminus\{0\})^2\to\N\setminus\{0\}$ defined as follows: for every pair $(m,n)\in\N$,
$$T(m,n)=m+\sum_{i=2}^{m+n-1}(i-1)=m+\frac{1}{2}(m+n-2)(m+n-1).$$
We represent in Figure \ref{fig:T} the sequence of the points $T(m,n)$.
\begin{figure}
	\centering
	\begin{tikzpicture}[>=latex]
		\draw[help lines] (1,1) grid (5,5);
        \draw (1,1) node[below left]{$1$};
        \draw (2,1) node[below]{$2$};
        \draw (3,1) node[below]{$3$};
        \draw (4,1) node[below]{$4$};
        \draw (1,2) node[left]{$2$};
        \draw (1,3) node[left]{$3$};
        \draw (1,4) node[left]{$4$};
		\draw[thick,->] (1,1)--(2,1);
		\draw[thick,->] (2,1)--(1,2);
		\draw[thick,->] (1,2)--(3,1);
		\draw[thick,->] (3,1)--(2,2);
		\draw[thick,->] (2,2)--(1,3);
		\draw[thick,->] (1,3)--(4,1);
		\draw[thick,->] (4,1)--(3,2);
		\draw[thick,->] (3,2)--(2,3);
		\draw[thick,->] (2,3)--(1,4);
		\draw[thick,->] (1,4)--(5,1);
		\draw[thick,->] (5,1)--(4,2);
		\draw[thick,->] (4,2)--(3,3);
		\draw[thick,->] (3,3)--(2,4);
		\draw[thick,->] (2,4)--(1,5);
	\end{tikzpicture}
	\caption{A representation of the map $T^{-1}\colon\N\setminus\{0\}\to(\N\setminus\{0\})^2$.}\label{fig:T}
\end{figure}
By construction, for every two pairs $(m,n),(m^\prime,n^\prime)\in\N\setminus\{0\}$, $m+n\leq m^\prime+n^\prime$ provided that $T(m,n)\leq T(m^\prime,n^\prime)$. For the sake of simplicity, for two pairs $(m,n),(m^\prime,n^\prime)\in(\N\setminus\{0\})^2$, let us denote $(m,n)\preceq(m^\prime,n^\prime)$ if $T(m,n)\leq T(m^\prime,n^\prime)$, and $(m,n)\prec(m^\prime,n^\prime)$ if $(m,n)\preceq(m^\prime,n^\prime)$ and $(m,n)\neq(m^\prime,n^\prime)$.

To construct the maps $\varphi_m^n$, we need various parameters. Let us define a sequence $\{a_i(m)\}_{i\in\N\setminus\{0\}}$ of positive real values depending on $m$ as follows: $$a_i(m)=4m(i-1).$$
Furthermore, for every $m,n\in\N\setminus\{0\}$, we inductively construct
$$D(m,n)=\max\bigg\{4m(n+2),\max_{(m^\prime,n^\prime)\prec (m,n)}D(m^\prime,n^\prime)+m+2^{T(m,n)}\bigg\}.$$
Thus, $D(1,1)=12$ (according to the notation that $\max\emptyset=-\infty$), and $D(m,n)\geq 4m(n+2)=a_n(m)+12m$. 

Let us define, for every $m,n\in\N\setminus\{0\}$, a map $\varphi_m^n\colon[0,m]^n\to [\R]^{=n+1}$ as follows: for every $(x_i)_i\in[0,m]^n$,
$$\varphi_m^n((x_i)_i)=\{a_i(m)+x_i\mid i=1,\dots,n\}\cup\{D(m,n)\}$$
(see Figure \ref{fig:varphi}). This construction should be compared with that provided in \cite{WeiYamZav}. The crucial difference is the last point $D(m,n)$. Its purpose is 
to disincentivise the action of isometries 
and conveniently increase the diameter of the image of $\varphi_m^n$ (see Lemma \ref{lemma:varphi}).
\begin{figure}
	\centering
\begin{tikzpicture}
	\draw[gray] (0,0)--(6,0) (7,0)--(10,0) (11,0)--(12,0);
	\draw[gray,dashed] (10,0)--(11,0) (6,0)--(7,0);
	\draw (0,-0.8)--(0,-1)--(1,-1)node[pos=0.5,below]{$m$} -- (1,-0.8);
    \draw (1,-0.5)--(1,-0.7)--(4,-0.7)node[pos=0.5,below]{$3m$}--(4,-0.5);
      \draw (9,-0.5)--(9,-0.7)--(12,-0.7)node[pos=0.5,below]{$\geq 11m$}--(12,-0.5);
	
	\draw (4,-0.8)--(4,-1)--(5,-1)node[pos=0.5,below]{$m$} -- (5,-0.8);
	\draw (8,-0.8)--(8,-1)--(9,-1)node[pos=0.5,below]{$m$} -- (9,-0.8);
	\draw[red] (0,0.6)--(0,0.8)--(0.6,0.8)node[pos=0.5,above]{$x_1$}--(0.6,0.2);
	\fill[red] (0.6,0) circle (2pt);
	\draw[red] (4,0.6)--(4,0.8)--(4.3,0.8)node[pos=0.5,above]{$x_2$}--(4.3,0.2);
	\fill[red] (4.3,0) circle (2pt);
	\draw[red] (8,0.6)--(8,0.8)--(8.8,0.8)node[pos=0.5,above]{$x_n$}--(8.8,0.2);
	\fill[red] (8.8,0) circle (2pt);
	\fill[blue](0.4,0) circle (2pt) (4.6,0) circle (2pt) (8.3,0) circle (2pt);
	\draw[blue] (0,-0.2)--(0,-0.4)--(0.4,-0.4)node[pos=0.5,below]{$y_1$}--(0.4,-0.2);
	\draw[blue] (4,-0.2)--(4,-0.4)--(4.6,-0.4)node[pos=0.5,below]{$y_2$}--(4.6,-0.2);
	\draw[blue] (8,-0.2)--(8,-0.4)--(8.3,-0.4)node[pos=0.5,below]{$y_n$}--(8.3,-0.2);
		\draw (0,-0.1)--(0,0.1);
	\draw (1,-0.1)--(1,0.1);
	\draw (4,-0.1)--(4,0.1);
	\draw (5,-0.1)--(5,0.1);
	\draw (8,-0.1)--(8,0.1);
	\draw (9,-0.1)--(9,0.1);
	\draw (12,-0.1)--(12,0.1);
 \fill (12,0) circle (2pt);
 \draw (12,0) node[above]{$D(m,n)$};
 \draw (0,0) node[above]{$a_1(m)$};
 	\draw (4,0) node[above]{$a_2(m)$};
	\draw (8,0) node[above]{$a_n(m)$};
\end{tikzpicture}

\caption{A representation of the images of two points $(x_i)_i,(y_i)_i\in[0,m]^n$ along $\varphi_m^n$. The subset $\varphi_m^n((x_i)_i)$ is given by the red dots and the black dot, while $\varphi_m^n((y_i)_i)$ consists of the blue dots and the black dot. In the picture, we can see that 
$d_H(\varphi_m^n((x_i)_i),\varphi_m^n((y_i)_i))=d_m^n((x_i)_i,(y_i)_i)$.}
\label{fig:varphi}
\end{figure}
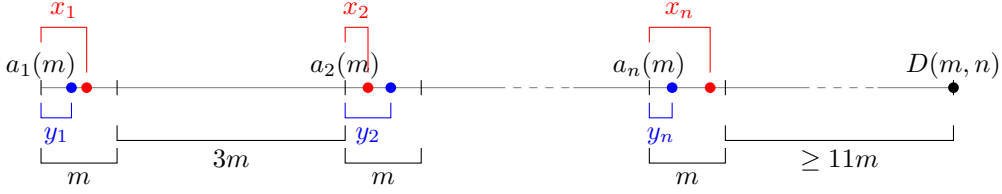

\begin{lemma}\label{lemma:varphi}
The map $\varphi_m^n\colon([0,m]^n,d_m^n)\to\mathcal{EH}_1^{<\omega}
$ is a coarse embedding whose control functions are independent of $m$ and $n$. More precisely, $\varphi_m^n$ is a $(\rho_-,\rho_+)$-coarse embedding, where $\rho_-\colon x\mapsto x/2$ and $\rho_+=id$. Furthermore, for every $(x_i)_i\in[0,m]^n$, $\diam\varphi_m^n((x_i)_i)\in[D(m,n)-m,D(m,n)]$.
\end{lemma}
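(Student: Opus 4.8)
The plan is to use the isometric characterisation $\EH(X,Y)=\inf_{f\in\Iso(\R)}d_H(X,f(Y))$ from the Corollary characterising $\EH$, together with the fact that every $f\in\Iso(\R)$ has the form $f(t)=\epsilon t+c$ with $\epsilon\in\{+1,-1\}$ and $c\in\R$; thus the analysis reduces to controlling how translations ($\epsilon=+1$) and reflections ($\epsilon=-1$) can align the two finite sets $\varphi_m^n((x_i)_i)$ and $\varphi_m^n((y_i)_i)$ on the line. The diameter statement is the easy part: since $a_1(m)=0$, the informative points $a_i(m)+x_i$ all lie in $[0,a_n(m)+m]$ with smallest value $x_1\in[0,m]$, whereas $D(m,n)\ge a_n(m)+12m$ exceeds each of them; hence $\min\varphi_m^n((x_i)_i)=x_1$ and $\max\varphi_m^n((x_i)_i)=D(m,n)$, so $\diam\varphi_m^n((x_i)_i)=D(m,n)-x_1\in[D(m,n)-m,D(m,n)]$.

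For the upper bound I would feed the identity isometry into the infimum: the correspondence pairing $a_i(m)+x_i$ with $a_i(m)+y_i$ and the top point $D(m,n)$ with itself has all paired distances at most $\max_i|x_i-y_i|=d_m^n((x_i)_i,(y_i)_i)$. Hence $d_H(\varphi_m^n((x_i)_i),\varphi_m^n((y_i)_i))\le d_m^n((x_i)_i,(y_i)_i)$ and therefore $\EH(\varphi_m^n((x_i)_i),\varphi_m^n((y_i)_i))\le d_m^n((x_i)_i,(y_i)_i)$, which is exactly $\rho_+=id$.

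The substance is the lower bound $\EH\ge\tfrac12 d_m^n$. Write $\delta=d_m^n((x_i)_i,(y_i)_i)\le m$, fix any $f(t)=\epsilon t+c$, and set $d=d_H(\varphi_m^n((x_i)_i),f(\varphi_m^n((y_i)_i)))$; I may assume $d<m/2$, since otherwise $d\ge m/2\ge\delta/2$ already. The elementary tools are that $d_H(A,B)\le d$ forces $|\max A-\max B|\le d$ and $|\min A-\min B|\le d$, and that then every point of $A$ lies within $d$ of $B$. In the translation case $\epsilon=+1$, comparing the maxima $D(m,n)$ and $D(m,n)+c$ pins $|c|\le d<m/2$; with $c$ this small, the gap of at least $3m$ separating distinct clusters and the gap of at least $11m$ before the top point force each point $a_i(m)+x_i$ to be matched to $a_i(m)+y_i+c$, giving $|x_i-y_i-c|\le d$ for every $i$. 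Combined with $|c|\le d$, this yields $|x_i-y_i|\le 2d$ for all $i$, hence $\delta\le 2d$.

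The reflection case $\epsilon=-1$ is where the asymmetry built into the construction—the isolated far point $D(m,n)$—does its work, and it is the step I expect to be the main obstacle. Now $f$ interchanges the roles of the two extreme points, so comparing extremes gives $|x_1-(c-D(m,n))|\le d$ and $|D(m,n)-(c-y_1)|\le d$, forcing $c$ to be within $d$ of $D(m,n)+x_1$. When $n=1$ this is already decisive: subtracting the two estimates yields $|x_1-y_1|=\delta\le 2d$, so $d\ge\delta/2$; this case is moreover tight, since an optimally centred reflection genuinely achieves $d_H=\delta/2$, which is precisely why the sharp lower control function is $x\mapsto x/2$ rather than the identity. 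When $n\ge 2$, the constraint $c\approx D(m,n)$ places the entire reflected informative block $\{c-a_i(m)-y_i\}$ near the top of the line (every such coordinate being at least $21m/2$) while $c-D(m,n)$ lies near the bottom; consequently the point $a_2(m)+x_2\in[4m,5m]$ of $\varphi_m^n((x_i)_i)$ is at distance greater than $m/2$ from every point of $f(\varphi_m^n((y_i)_i))$, contradicting $d<m/2$. In all cases $d\ge\delta/2$, so taking the infimum over $f$ gives $\EH\ge\tfrac12 d_m^n$; since $\rho_-(x)=x/2\to\infty$ and neither control function depends on $m$ or $n$, the claim follows.
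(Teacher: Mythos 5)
Your proposal is correct and follows essentially the same route as the paper: you reduce via $\EH(X,Y)=\inf_{f\in\Iso(\R)}d_H(X,f(Y))$ to translations and reflections of the line, use the outlying point $D(m,n)$ together with min/max matching to exclude reflections when $n\geq 2$ (your ``stranded'' point $a_2(m)+x_2$ plays exactly the role of the paper's Claim \ref{claim:no_rotation}, which instead reaches the contradiction $11m\leq 7m$ by a chain of triangle inequalities), handle $n=1$ separately, and extract the factor $1/2$ from the trade-off between $\lvert c\rvert\leq d$ and $\lvert x_i-y_i-c\rvert\leq d$. If anything, your explicit derivation of these two estimates (pinning $c$ by comparing maxima, then matching clusters using the $4m$ spacing) makes rigorous the final trade-off step that the paper's proof only describes verbally.
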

\begin{proof}
It is easy to see that $$\EH(\varphi_m^n((x_i)_i),\varphi_m^n((y_i)_i))\leq d_H(\varphi_m^n((x_i)_i),\varphi_m^n((y_i)_i))= d_m^n((x_i)_i,(y_i)_i)$$
for every $(x_i)_i,(y_i)_i\in[0,m]^n$ (see also Figure \ref{fig:varphi}). We want to prove that $\rho_-$ is the other control function. Let $f\in\Iso(\R^d)$ be an isometry such that 
\begin{equation}\label{eq:varphi}
d_H(\varphi_m^n((x_i)_i),f(\varphi_m^n((y_i)_i)))\leq d_m^n((x_i)_i,(y_i)_i)\leq m.\end{equation}
The isometry $f$ is the composite of a translation $g$ and a rotation $h$. If $n=1$, then $\lvert\varphi_m^n((x_i)_i)\rvert=2$, and so, without loss of generality, $h$ can be taken as the identity. 
\begin{claim}\label{claim:no_rotation}
Assume that $n\geq 2$. Then, $h=id$.
\end{claim}
\begin{proof}
Suppose, by contradiction, that $h\neq id$. According to \eqref{eq:varphi}, and because of the definition of $\varphi_m^n$, 
\begin{equation}\label{eq:varphi_1}
\lvert f(D(m,n))-(a_1(m)+x_1)\rvert\leq m.
\end{equation}
Since $\lvert (a_1(m)+x_1)-(a_2(m)+x_1)\rvert\geq\lvert a_1(m)-a_2(m)\rvert-m>2m$, \eqref{eq:varphi_1} implies that $\lvert f(D(m,n))-(a_2(m)+x_2(m))\rvert>m$. Therefore, because of \eqref{eq:varphi}, 
\begin{equation}\label{eq:varphi_2}\lvert (a_2(m)+x_2)-f(a_n(m)+y_n)\rvert\leq m.\end{equation}
Using the fact that $f$ is an isometry, the triangular inequality, \eqref{eq:varphi_1} and \eqref{eq:varphi_2}, the following chain of inequalities descends:
$$\begin{aligned}
11m&\,\leq\lvert D(m,n)-a_n(m)\rvert-m\leq\lvert D(m,n)-(a_n(m)+y_n)\rvert=\lvert f(D(m,n))-f(a_n(m)+y_n)\rvert\leq\\
&\,\leq\lvert f(D(m,n))-(a_1(m)+x_1)\rvert+\lvert (a_1(m)+x_1)-(a_2(m)+x_2)\rvert+\lvert (a_2(m)+x_2)-f(a_n(m)+y_n)\rvert\leq\\
&\,\leq m+5m+m=7m.
\end{aligned}$$
Thus, we obtain a contradiction.
\end{proof}
We can now assume that $f=g$ is a translation. Using the triangular inequality, and since $a_{i+1}(m)-a_i(m)=4m$ for every $i\in\{1,\dots,n\}$, it can be easily check that,
$$\lvert (a_i(m)+x_i)-f(a_i(m)+y_i)\rvert\leq m,\text{ for every $i\in\{1,\dots,n\}$, and }\lvert D(m,n)-f(D(m,n))\rvert\leq m.$$

The point $D(m,n)$, common to both $\varphi_m^n((x_i)_i)$ and $\varphi_m^n((y_i)_i)$, misaligns as soon as $f$ is non-trivial. Therefore, $f$ creates a trade-off between the misalignment just described and a better alignment of the other $n$ pairs of points ($a_i(m)+x_i$ and $a_i(m)+y_i$, for every $i\in\{1,\dots,n\}$). Thus, the best Hausdorff distance that can be achieved is at most $d_H(\varphi_m^n((x_i)_i),\varphi_m^n((y_i)_i))/2=d_m^n((x_i)_i,(y_i)_i)/2$, and so the claim.

The final assertion trivially follows from the definition of $\varphi_m^n$. 
\end{proof}

The following result is immediate, but it is an important step in the proof of Theorem \ref{thm:EH_not_emb}.
\begin{lemma}\label{lemma:dist_via_diameter}
If $\mathcal Y$ and $\mathcal Z$ are two families of finite subsets of a metric space $X$, 
$$\dist_{d_H}(\mathcal Y,\mathcal Z)\geq\frac{\inf_{Y\in\mathcal Y\text{, }Z\in\mathcal Z}\lvert\diam Y-\diam Z\rvert}{2}.$$
Furthermore, if $X=\R^d$, then 
$$\dist_{\EH}(\mathcal Y,\mathcal Z)\geq\frac{\inf_{Y\in\mathcal Y\text{, }Z\in\mathcal Z}\lvert\diam Y-\diam Z\rvert}{2}.$$
\end{lemma}
\begin{proof}
Since, for every pair of subsets $Y$ and $Z$ of $X$, 
$$d_H(Y,Z)\geq\frac{\lvert\diam Y-\diam Z\rvert}{2},$$
the first inequality is immediate. The second one follows from the fact that an isometry's action does not change the diameter of a subset.
\end{proof}
As a consequence of Lemma \ref{lemma:dist_via_diameter}, if $\mathcal Y$ and $\mathcal Z$ are two families of finite subsets of $\R^d$ with the property that, for every $Y\in\mathcal Y$ and $Z\in\mathcal Z$, $\diam Y\geq\diam Z$, 
\begin{equation}\label{eq:dist_via_diameter}
\dist_{\EH}(\mathcal Y,\mathcal Z)\geq\frac{\inf_{Y\in\mathcal Y}\diam Y-\sup_{Z\in\mathcal Z}\diam Z}{2}.
\end{equation}

We now have all the tools to show our main result, Theorem \ref{thm:EH_not_emb}, and its consequence, Theorem \ref{theo:theoB}.

\begin{proof}[Proof of Theorem \ref{thm:EH_not_emb}]
We intend to use Lafforgue's result (Theorem \ref{thm:Lafforgue}), and, thanks to Lemma \ref{lemma:Kuratowski}, we need to show that $\mathcal{EH}_1^{<\omega}$ 
contains a coarse disjoint union of $\{([0,m]^n,d_m^n)\}_{m,n\in\N\setminus\{0\}}$. According to Lemma \ref{lemma:varphi}, $\{\varphi_m^n\colon[0,m]^n\to\mathcal{EH}_1^{<\omega}
\}$ is a coarse embedding. It remains to show that 
$$\dist_{\EH}(\varphi_m^n([0,m]^n),\varphi_{m^\prime}^{n^\prime}([0,m^\prime]^{n^\prime}))\xrightarrow{T(m,n)+T(m^\prime,n^\prime)\to\infty}\infty.$$ Without loss of generality, we assume that  $(m^\prime,n^\prime)\prec(m,n)$. Then, according to Lemma \ref{lemma:varphi}, \eqref{eq:dist_via_diameter}, and the definition of $D(m,n)$,
\begin{equation}\label{eq:coarse_disj_union}\dist_{\EH}(\varphi_m^n([0,m]^n),\varphi_{m^\prime}^{n^\prime}([0,m^\prime]^{n^\prime}))\geq\frac{D(m,n)-D(m^\prime,n^\prime)-m}{2}\geq 2^{T(m,n)-1}.\end{equation}
From \eqref{eq:coarse_disj_union}, the desired result descends.
\end{proof}


\section{Bi-Lipschitz non-embeddability into finite-dimensional spaces}\label{sec:non_bi_lip_emb}

Let us consider the isometry classes of all finite subsets of the real line with diameter at most $R$. This set can be identified with the isometry classes of elements in $[[0,R]]^{<\omega}$. Let us denote by $\EHs_{[0,R]}^{<\omega}$ this space equipped with the Euclidean-Hausdorff distance. 

Since, for every pair of metric spaces $X,Y\in\GHs_{\leq R}^{<\omega}$,
$$\GH(X,Y)\leq \frac{\max\{\diam X,\diam Y\}}{2}\leq\frac{R}{2},$$
the diameter of $\GHs_{\leq R}^{<\omega}$ is finite. Therefore, it can be trivially coarsely embedded into a one-point metric space. However, we can still provide non-embeddability results if we restrict the class of embeddings. 
More precisely, we prove the following.

\begin{theorem}\label{thm:EH_leq_R_non_embed}
$\EHs_{[0,R]}^{<\omega}$ cannot be bi-Lipschitz embedded into a finite-dimensional Hilbert space.
\end{theorem}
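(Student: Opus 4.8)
The plan is to compute the Assouad dimension of $\EHs_{[0,R]}^{<\omega}$ and show it is infinite; by the stated fact that the Assouad dimension is monotone under subspaces and equals $n$ for $\R^n$, an infinite Assouad dimension rules out any bi-Lipschitz embedding into a finite-dimensional Hilbert space. Concretely, I would prove that for every $N$ there is a ball $B(C,r)$ in $\EHs_{[0,R]}^{<\omega}$ whose $(\beta r)$-covering number grows at least like $\beta^{-N}$ as $\beta\to 0$, so that no finite exponent $\alpha$ can work in the definition of $\dim_A$.

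First I would exploit the self-similarity provided by the scaling Lemma (the analogue of \cite[Lemma 4.6]{Ant1}), since $\EH(tX,tY)=t\,\EH(X,Y)$ makes the space look the same at all scales and reduces the problem to producing, at one scale, many finite subsets of an interval that are mutually well-separated yet all contained in a small ball. The construction I have in mind reuses the geometry of the maps $\varphi_m^n$ from Section \ref{sec:non_coarse_emb}: placing $n$ controllable coordinates into well-separated slots of an interval, together with a common anchor point to forbid nontrivial isometries (exactly the role of $D(m,n)$ in Lemma \ref{lemma:varphi}), produces a bi-Lipschitz copy of a cube $[0,m]^n$ inside $\EHs_{[0,R]}^{<\omega}$ after rescaling to fit diameter $R$. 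A bi-Lipschitz copy of an $n$-dimensional cube forces the Assouad dimension to be at least $n$, and since $n$ is arbitrary the dimension is infinite. Equivalently, inside a single ball one can exhibit roughly $(1/\beta)^{n}$ points pairwise $\beta r$-separated by varying the $n$ coordinates on a grid of spacing proportional to $\beta$, while the anchor keeps all images in the same small ball; this directly lower-bounds $N_{\beta r}(B(C,r))$ by a constant times $\beta^{-n}$.

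The key steps, in order, would be: (i) record the scaling invariance so that it suffices to work at diameter $R$ and pass between scales freely; (ii) for each $n$, embed a Euclidean cube $[0,m]^n$ into $\EHs_{[0,R]}^{<\omega}$ bi-Lipschitzly, using the anchor-point trick to guarantee that the Hausdorff/Euclidean-Hausdorff distance equals the sup-metric distance up to a fixed factor (this is where Lemma \ref{lemma:varphi}'s argument, that no nontrivial isometry helps, gets reused); (iii) conclude that $\dim_A \EHs_{[0,R]}^{<\omega}\geq n$ for all $n$, hence $\dim_A \EHs_{[0,R]}^{<\omega}=\infty$; and (iv) invoke the bi-Lipschitz invariance and monotonicity of Assouad dimension to rule out embeddings into any finite-dimensional Hilbert space. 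This is essentially Proposition \ref{prop:dimA} feeding into Theorem \ref{thm:EH_leq_R_non_embed}.

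The main obstacle I expect is step (ii): verifying that the lower Lipschitz control survives after capping the diameter at $R$ and after rescaling, i.e., that the separation between grid images measured in the $\EH$ metric is genuinely comparable to their separation in the sup-metric and does not collapse under some clever isometry of $\R$ that realigns several coordinate slots at once. Controlling this requires the spacing parameters $a_i$ to be spread far enough apart relative to the coordinate range (so distinct coordinates cannot be matched to each other) and requires the anchor point to be far enough from the informative block to penalize any translation or reflection — precisely the trade-off analyzed in Claim \ref{claim:no_rotation} and the paragraph following it. Once that separation estimate is in hand, the counting of $\beta$-separated grid points and the covering-number lower bound are routine, and the infinitude of the Assouad dimension follows.
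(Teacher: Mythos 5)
Your proposal is correct, and while it shares the paper's overall frame---Theorem \ref{thm:EH_leq_R_non_embed} is deduced from $\dim_A\EHs_{[0,R]}^{<\omega}=\infty$ via bi-Lipschitz invariance and monotonicity of the Assouad dimension---you establish the key dimension statement by a genuinely different route. The paper's Proposition \ref{prop:dimA} is a self-contained, single-scale construction: it fixes $\beta=1/2$ and builds, inside one ball of radius $r$, arbitrarily many configurations $A_1,\dots,A_M$ (slots at spacing $l$, perturbations by $s$, anchor points $0$, $R/2$, $R$) that are pairwise $2\beta r$-separated, so that the covering number exceeds any prescribed $C\beta^{-\alpha}$; the definition of $\dim_A$ is thus violated by letting the \emph{count} $M$ grow at a fixed scale ratio. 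You instead recycle Section \ref{sec:non_coarse_emb}: Lemma \ref{lemma:varphi} already asserts that $\varphi_m^n$ is a coarse embedding with \emph{linear} control functions $x\mapsto x/2$ and $x\mapsto x$, i.e., a bi-Lipschitz embedding of the sup-metric cube $[0,m]^n$, and since $\EH(tX,tY)=t\,\EH(X,Y)$ (a one-line fact you should record explicitly, as the compiled paper never states it), rescaling by $t=R/D(m,n)$ plants a bi-Lipschitz copy of an $n$-dimensional cube inside $\EHs_{[0,R]}^{<\omega}$ for every $n$; then $\dim_A\geq n$ follows either from $\dim_A[0,m]^n=n$ or from your grid count, which violates the definition by letting $\beta\to 0$ at fixed $n$. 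Both arguments ultimately rest on the same anchor mechanism of Claim \ref{claim:no_rotation} that rules out isometries realigning slots. Your route is more modular---it reuses proved lemmas rather than redoing the rigidity analysis, and the fact that your embedding constants degenerate as $n$ grows is harmless because Assouad dimension is invariant under bi-Lipschitz maps with arbitrary constants---whereas the paper's bespoke construction keeps Section \ref{sec:non_bi_lip_emb} independent of the quantitative details of Lemma \ref{lemma:varphi} and yields as a byproduct the remark that the hyperspace of finite subsets of an interval with the Hausdorff distance also has infinite Assouad dimension.
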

Immediately, we can deduce Theorem \ref{theo:theoC} 
since Theorem \ref{thm:biLipschitz} provides a bi-Lipschitz embedding.

To prove Theorem \ref{thm:EH_leq_R_non_embed}, we use the Assouad dimension. 
\begin{definition}
Given a metric space $(X,d)$, a subset $E\subseteq X$ and $r>0$, we denote by $N_r(E)$ the least number of open balls of radius less or equal to $r$ that cover $E$. Then, the {\em Assouad dimension of $X$} (\cite{Ass,Bou}) is the value
$$\dim_AX=\inf\{\alpha>0\mid\exists C>0:\forall r>0,\forall\beta\in(0,1],\sup_{x\in X}N_{\beta r}(B_d^o(x,r))<C\beta^{-\alpha}\},$$
where $B_d^o(x,r)$ denote the open ball centred in $x$ with radius $r$.
\end{definition}
This dimension notion was introduced precisely to prove obstructions to bi-Lipschitz embed metric spaces into an Euclidean space. In particular, the following properties lead to the desired conclusion (see, for example, \cite{Rob}):
\begin{compactenum}[$\bullet$]
\item if $\varphi\colon X\to Y$ is a bi-Lipschitz embedding between metric spaces, then $\dim_AX=\dim_Aim(\varphi)\leq\dim_AY$;
\item for every $n\in\N$, $\dim_A\R^n=n$.
\end{compactenum}
Therefore, once we prove that $\dim_A\EHs_{[0,R]}^{<\omega}=\infty$ (Proposition \ref{prop:dimA}), Theorem \ref{thm:EH_leq_R_non_embed} immediately follows.

Let us mention that the same strategy was used in \cite{CarBau} to prove that spaces of persistence diagrams cannot be bi-Lipschitz embedded into a finite-dimensional Hilbert space.

\begin{proposition}\label{prop:dimA}
$\dim_A\EHs_{[0,R]}^{<\omega}=\infty$.
\end{proposition}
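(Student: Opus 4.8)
The plan is to show that $\EHs_{[0,R]}^{<\omega}$ contains, at arbitrarily small scales, configurations that force the covering number $N_{\beta r}(B^o(x,r))$ to grow faster than any fixed power $\beta^{-\alpha}$, which by definition forces $\dim_A$ to be infinite. By the scaling invariance noted in the excerpt (the Euclidean-Hausdorff distance is homogeneous: $\EH(tX,tY)=t\,\EH(X,Y)$), all the spaces $\EHs_{[0,R]}^{<\omega}$ are bi-Lipschitz equivalent, so it suffices to work with a single convenient $R$, and moreover scaling lets us transport any obstruction found at a fixed scale down to arbitrarily small scales. The strategy is therefore to exhibit, for each $\alpha$, a single ball $B^o(x,r)$ inside which one must place more than $C\beta^{-\alpha}$ balls of radius $\beta r$ to cover a suitable finite set of configurations, with $\beta$ chosen as small as needed.

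The key construction I would use is a family of finite subsets of $[0,R]$ obtained by placing points on a fine grid and encoding a large packing. Concretely, fix a scale and consider subsets of the form $\{0\}\cup\{R\}\cup S$, where $S$ ranges over subsets of a grid of $N$ equally spaced points in $[\,\varepsilon, R-\varepsilon\,]$; pinning the endpoints $0$ and $R$ stabilizes the diameter and, crucially, kills the isometric freedom of $\R$ (a reflection that fixes the pair $\{0,R\}$ is the only nontrivial isometry, and one can either arrange the configurations to be reflection-asymmetric or absorb the reflection into a factor of $2$). With the isometries neutralized, the Euclidean-Hausdorff distance between two such configurations is comparable to the Hausdorff distance between the corresponding grid-subsets. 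I would then select a sub-family of these grid-subsets that is simultaneously contained in a small Hausdorff-ball (so all configurations lie in one ball $B^o(x,r)$) yet pairwise $\beta r$-separated (so each needs its own covering ball), and whose cardinality grows super-polynomially in $1/\beta$. A natural source of such a family is: take all subsets of the grid that contain a fixed ``core'' and differ only by toggling points within a window of controlled width, so that every such set is within $r$ of the core in Hausdorff distance while any two distinct ones differ by at least one grid-spacing $\approx \beta r$; the number of such sets is roughly $2^{(\text{window size}/\text{spacing})}$, which is exponential in $1/\beta$ and hence exceeds $C\beta^{-\alpha}$ for every $\alpha$ once $\beta$ is small.

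The main obstacle, and the step requiring the most care, is controlling the Euclidean-Hausdorff distance rather than the plain Hausdorff distance: I must verify that no isometry of $\R$ can bring two distinct configurations closer than the Hausdorff separation suggests. This is exactly the role played by the pinned endpoints $0$ and $R$, and the argument mirrors Claim~\ref{claim:no_rotation} and Lemma~\ref{lemma:varphi} in spirit — the far-apart anchor points penalize any nontrivial isometry (here, the only candidate is the reflection $t\mapsto R-t$) strongly enough that it cannot improve the match. I would prove a clean separation lemma: for the chosen family, $\EH(A,A')\geq \tfrac{1}{2}d_H(A,A')$ (or some fixed constant multiple), so that the packing and covering estimates done in the Hausdorff metric transfer to $\EH$ up to a harmless constant absorbed into $C$ and the comparison of $r$.

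Once the separation lemma is in place, the counting is routine: I exhibit a center configuration $x$, a radius $r$, and for each small $\beta$ a family of at least $2^{c/\beta}$ configurations inside $B^o(x,r)$ that are pairwise more than $2\beta r$ apart in $\EH$, whence $N_{\beta r}(B^o(x,r))\geq 2^{c/\beta}$. Since $2^{c/\beta}$ is not bounded by any $C\beta^{-\alpha}$ as $\beta\to 0$, no finite $\alpha$ can satisfy the defining inequality of the Assouad dimension, giving $\dim_A\EHs_{[0,R]}^{<\omega}=\infty$ as required.
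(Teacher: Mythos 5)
Your proposal is correct, and its skeleton coincides with the paper's: both arguments pack many finite grid-like subsets of $[0,R]$ into a single small $\EH$-ball, pin anchor points to neutralise the isometries of $\R$, and transfer a Hausdorff-distance packing bound to $\EH$ via a separation lemma (the paper rules out reflections by adapting Claim~\ref{claim:no_rotation} and handles translations by a triangle-inequality chain through the anchors $\{0,R/2,R\}$, exactly the mechanism you describe). Where you genuinely diverge is in how the Assouad quantifiers are attacked. The paper fixes $\beta=1/2$ once and for all and lets the scale shrink: it needs only $M=\lceil C2^{\alpha}+1\rceil$ configurations $A_1,\dots,A_M$, each a one-point perturbation of a uniform grid $A$ with spacing $l=R/(2(M+1))$ (all interior points shifted by $+s$ except the $i$-th, shifted by $-s$), so the counting is linear in the grid size and the pairwise-separation check reduces to comparing the two distinguished points $a_i^i,a_j^i$ versus $a_i^j,a_j^j$. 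You instead fix the scale and drive $\beta\to 0$, using the exponential family of toggle-patterns of a window grid; this buys a quantitatively stronger conclusion, $N_{\beta r}(B^o(x,r))\geq 2^{c/\beta}$, at the cost of a slightly more delicate separation lemma (a symmetric-difference point costs $\geq\delta-\lvert t\rvert$ against a translation by $t$ while an isolated anchor costs $\lvert t\rvert$, so the worst case loses a factor $2$; taking spacing $\delta\geq 5\beta r$ absorbs this, as you anticipate). One point to tighten: your parenthetical treatment of reflections is the only fragile spot, since if the window sits symmetrically about $R/2$ and $S'$ is the mirror pattern of $S$, then $\EH(A_S,A_{S'})=0$ even though $d_H(A_S,A_{S'})>0$; so a blanket inequality $\EH\geq\frac{1}{2}d_H$ is false for a symmetric family, and you must either place the window asymmetrically in $[0,R]$ (the paper's configurations are automatically asymmetric, occupying $[0,R/2]$ plus the anchor $R$) or keep one representative per mirror-orbit, which halves the count but leaves it exponential. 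With that fixed, your argument goes through and proves the proposition.
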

\begin{proof}
We want to provide, for every $\alpha>0$ and every $C>0$, a radius $r>0$, a constant $\beta\in(0,1]$, a finite subset $A$ of $[0,R]$ and $M=\lceil C\beta^{-\alpha}+1\rceil$-many finite subsets $A_1,\dots,A_M$ of $[0,R]$ with the following properties: $\EH(A,A_i)<r$ and $\EH(A_i,A_j)>2\beta r$ for every $i,j\in\{1,\dots,M\}$.  Therefore, the open ball centred in $A$ with radius $r$ cannot be covered by fewer than $M$-many open balls with radius $\beta r$ ($A_i$ and $A_j$ are contained in the same ball only if $i=j$), which will conclude the proof since $M>C\beta^{-\alpha}$. 

Define $l=\frac{R}{2(M+1)}$, $r=l/6$, $s=2r/3$ and $\beta=1/2$. We construct the subsets $A$ and $A_i$ as follows:
\begin{gather*}A=\{j\cdot l\mid j\in\{0,\dots,M+1\}\}\cup\{R\},\quad\text{and}\\ 
A_i=\{j\cdot l+s\mid j\in\{1,\dots,M\}\setminus\{i\}\}\cup\{i\cdot l-s\}\cup\{0,R/2,R\}.\end{gather*}
The subset $A$ and $A_i$ are represented in Figure \ref{fig:dim_A}. It is clear that $\EH(A,A_i)<r$ since $s<r$ (actually, $d_H(A,A_i)=s<r$). It remains to show that, for every pair of distinct indices $i,j\in\{1,\dots,M\}$, $\EH(A_i,A_j)\geq 2s>r$. 

Let $i,j\in\{1,\dots,M\}$ be two distinct indices. Assume, by contradiction, that $\EH(A_i,A_j)<2s$, and let $f$ be an isometry of $\R$ such that $d_H(A_i,f(A_j))< 2s$. Adapting the argument of Claim \ref{claim:no_rotation}, we can assume that $f$ is a translation. 

For the sake of simplicity, for every $k\in\{1,\dots,n\}$, we name the points of $A_k$ as 
\begin{gather*}a_0^k=0, \quad a_1^k=l+s,\quad\dots\quad, a_{k-1}^k=(k-1)l+s,\quad a_{k}^k=kl-s,\\ a_{k+1}^k=(k+1)l+s,\quad\dots\quad, a_M^k=Ml+s,\quad a_{M+1}^k=R/2,\quad a_{M+2}^k=R. \end{gather*}
Again, following the strategy used in the proof of Lemma \ref{lemma:varphi}, we can show that, for every $k\in\{0,\dots,M+2\}$, 
\begin{equation}\label{eq:a_k_with_a_k}\lvert a_k^i-f(a_k^j)\rvert< 2s.\end{equation}
Assume, without loss of generality, that $i<j$. Then, in particular, 
\begin{equation}\label{eq:a_i_and_j}\lvert a_i^j-a_j^j\rvert=(j-i)-2s\quad\text{and}\quad\lvert a_i^i-a_j^i\rvert=(j-i)+2s.\end{equation} Using the triangular inequality, \eqref{eq:a_k_with_a_k}, \eqref{eq:a_i_and_j} and the fact that $f$ is an isometry we obtain the following chain of inequalities:
\begin{align*}
l(j-i)+2s=&\,\lvert a_i^i-a_j^i\rvert\leq \lvert a_i^i-f(a_i^j)\rvert+\lvert f(a_i^j)-f(a_j^j)\rvert+\lvert f(a_j^j)-a_j^i\rvert<\\
<&\,\lvert a_i^j-a_j^j\rvert+4s=(j-i)l+2s.
\end{align*}
Hence, a contradiction.
\end{proof}

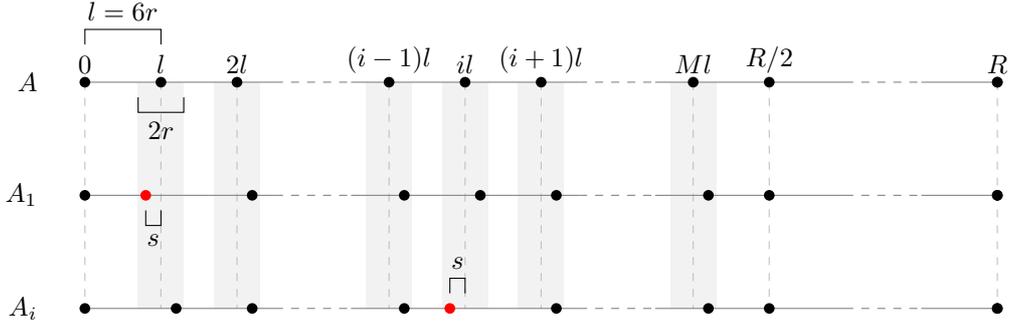
\begin{figure}
\centering
\begin{tikzpicture}
\fill[black!5!white] (0.7,0)--(1.3,0)--(1.3,-3)--(0.7,-3)--(0.7,0);
 \begin{scope}[shift={({1},{0})}]
 \fill[black!5!white] (0.7,0)--(1.3,0)--(1.3,-3)--(0.7,-3)--(0.7,0);
 \end{scope}
 \begin{scope}[shift={({3},{0})}]
 \fill[black!5!white] (0.7,0)--(1.3,0)--(1.3,-3)--(0.7,-3)--(0.7,0);
 \end{scope}
 \begin{scope}[shift={({4},{0})}]
 \fill[black!5!white] (0.7,0)--(1.3,0)--(1.3,-3)--(0.7,-3)--(0.7,0);
 \end{scope}
 \begin{scope}[shift={({5},{0})}]
 \fill[black!5!white] (0.7,0)--(1.3,0)--(1.3,-3)--(0.7,-3)--(0.7,0);
 \end{scope}
 \begin{scope}[shift={({7},{0})}]
 \fill[black!5!white] (0.7,0)--(1.3,0)--(1.3,-3)--(0.7,-3)--(0.7,0);
 \end{scope}
\draw[lightgray,dashed] (0,0)--(0,-3) (1,0)--(1,-3) (2,0)--(2,-3) (4,0)--(4,-3) (5,0)--(5,-3) (6,0)--(6,-3) (8,0)--(8,-3) (9,0)--(9,-3) (12,0)--(12,-3);
\draw (-0.5,0)node[left]{$A$};
\draw[gray] (0,0)--(2.5,0) (3.5,0)--(6.5,0) (7.5,0)--(10,0) (11,0)--(12,0);
\draw[gray,dashed] (2.5,0)--(3.5,0) (6.5,0)--(7.5,0) (10,0)--(11,0);
\fill (1,0) circle (2pt) (2,0)  circle (2pt) (4,0) circle (2pt) (5,0) circle (2pt) (6,0) circle (2pt) (8,0) circle (2pt) (12,0) circle (2pt);
\fill
(0,0) circle (2pt) (9,0) circle (2pt) (12,0) circle (2pt);
\draw (0.7,-0.2)--(0.7,-0.4)--(1.3,-0.4)node[pos=0.5,below]{$2r$}--(1.3,-0.2);
\draw (0,0.5)--(0,0.7)--(1,0.7)node[pos=0.5,above]{$l=6r$}--(1,0.5);

 \draw (12,0) node[above]{$R$};
 \draw (0,0) node[above]{$0$};
  \draw (1,0) node[above]{$l$};
 \draw (2,0) node[above]{$2l$};
 \draw (4,0) node[above]{$(i-1)l$};
 \draw (5,0) node[above]{$il$};
 \draw (6,0) node[above]{$(i+1)l$};
 \draw (8,0) node[above]{$Ml$};
 \draw (9,0) node[above]{$R/2$};

 \begin{scope}[shift={({0},{-1.5})}]
\draw (-0.5,0)node[left]{$A_1$};
\draw[gray] (0,0)--(2.5,0) (3.5,0)--(6.5,0) (7.5,0)--(10,0) (11,0)--(12,0);
\draw[gray,dashed] (2.5,0)--(3.5,0) (6.5,0)--(7.5,0) (10,0)--(11,0);
\fill
(2.2,0)  circle (2pt) (4.2,0) circle (2pt) (5.2,0) circle (2pt) (6.2,0) circle (2pt) (8.2,0) circle (2pt) (12,0) circle (2pt);
\fill[red] 
(0.8,0) circle (2pt);
\fill
(0,0) circle (2pt) (9,0) circle (2pt) (12,0) circle (2pt);
\draw (1,-0.2)--(1,-0.4)--(0.8,-0.4)node[pos=0.5,below]{$s$}--(0.8,-0.2);


\end{scope}

 \begin{scope}[shift={({0},{-3})}]
\draw (-0.5,0)node[left]{$A_i$};
\draw[gray] (0,0)--(2.5,0) (3.5,0)--(6.5,0) (7.5,0)--(10,0) (11,0)--(12,0);
\draw[gray,dashed] (2.5,0)--(3.5,0) (6.5,0)--(7.5,0) (10,0)--(11,0);
\fill
(1.2,0) circle (2pt) (2.2,0)  circle (2pt) (4.2,0) circle (2pt) 
(6.2,0) circle (2pt) (8.2,0) circle (2pt) (12,0) circle (2pt);
\fill[red] (4.8,0) circle (2pt);
\fill
(0,0) circle (2pt) (9,0) circle (2pt) (12,0) circle (2pt);
\draw (4.8,0.2)--(4.8,0.4)--(5,0.4)node[pos=0.5,above]{$s$}--(5,0.2);


\end{scope}
\end{tikzpicture}
\caption{A representation of the subsets $A$, $A_1$ and $A_i$ defined in the proof of Proposition \ref{prop:dimA}. The distinctive points $l-s\in A_1$ and $il-s\in A_i$ are emphasised in red. The light grey strips are meant to visualise the fact that $d_H(A,A_1)<r$ and $d_H(A,A_i)<r$.
}\label{fig:dim_A}
\end{figure}

\begin{remark}
   As a byproduct of the proof of Proposition \ref{prop:dimA}, we obtain that the Assouad dimension of the family of all finite subsets of an interval equipped with the Hausdorff distance is infinite. Indeed, in the notation of the mentioned proof, $d_H(A,A_i)=s<r$, but $d_H(A_i,A_j)=2s>r$ for every pair of distinct indices $i,j\in\{1,\dots,M\}$.
\end{remark}

\subsection{Questions about bi-Lipschitz embeddability and Assouad dimension}\label{sub:q}

Let us conclude the presentation with a discussion about potential future research directions concerning the bi-Lipschitz embeddability of the Gromov-Hausdorff space. 
First of all, Theorem \ref{theo:theoC} leaves the following question open.
\begin{question}\label{q:GH_finite_biL}
Can $\mathcal{GH}_{\leq R}^{<\omega}$ be bi-Lipschitz embedded into an infinite-dimensional Hilbert space?
\end{question}

Furthermore, it is natural to ask what the embeddability properties are if we bound the cardinality of the metric spaces as in Section \ref{sec:asdim}. Let us denote $\mathcal{GH}_{\leq R}^{\leq n}=\mathcal{GH}^{\leq n}\cap\mathcal{GH}_{\leq R}^{<\omega}$ for $n\in\N$ and $R>0$. 

\begin{question}\label{q:GH_bi_L}
Can $\mathcal{GH}^{\leq n}$ and $\mathcal{GH}^{\leq n}_{\leq R}$ be bi-Lipschitz embedded into a (finite-dimensional) Hilbert space?
\end{question}

To approach Question \ref{q:GH_bi_L}, we may investigate the Assouad dimension of those spaces. 
The inequalities $$\dim_A\mathcal{GH}^{\leq n}\geq \dim_A\mathcal{GH}^{\leq n}_{\leq R}\geq\frac{n(n-1)}{2}$$ can be derived similarly to the proof of Lemma \ref{lemma:asdim_geq}. Indeed, according to \cite[Theorem 4.1]{IliIvaTuz}, $\mathcal{GH}^{\leq n}_{\leq R}$ contains a subspace isometric to an open ball in $\R^{n(n-1)/2}$ of sufficiently small radius, which has Assouad dimension $n(n-1)/2$ (\cite[Lemma 9.6(iii)]{Rob}). Hence, monotonicity and bi-Lipschitz invariance of the dimension imply the desired estimate. However, to the best of the author's knowledge, no upper bounds are known for the dimension of those spaces, and the following questions remain open.
\begin{question}\label{q:dim_A_GH}
What are $\dim_A\mathcal{GH}^{\leq n}$ and $\dim_A\mathcal{GH}^{\leq n}_{\leq R}$?
\end{question}
\begin{question}\label{q:dim_A_GH_finite}
Are $\dim_A\mathcal{GH}^{\leq n}$ and $\dim_A\mathcal{GH}^{\leq n}_{\leq R}$ finite?
\end{question}


A connection between Questions \ref{q:dim_A_GH_finite} and \ref{q:GH_bi_L} has already been exploited to deduce Theorem \ref{theo:theoC}, namely, infinite Assouad dimension prevents the existence of bi-Lipschitz embeddings into finite-dimensional Hilbert spaces. 
However, unlike the situation described for asymptotic dimension and coarse embeddings, a positive answer to Question \ref{q:dim_A_GH_finite} does not imply the existence of a bi-Lipschitz embedding into some $\R^n$. Indeed, having finite Assouad dimension is not a sufficient condition for the existence of a bi-Lipschitz embedding even into some infinite-dimensional Hilbert space (\cite{Pan,Sem,LanPla,Laa}). However, some positive results can be proved at the cost of modifying the original metric space. 
For a metric space $(X,d)$ and $0<\varepsilon<1$, the {\em $\varepsilon$-snowflaking of $X$} is the metric space $(X,d^\varepsilon)$, where $d^\varepsilon(x,y)=(d(x,y))^\varepsilon$. 
\begin{theorem}[Assouad embedding theorem, \cite{Ass}]\label{thm:Ass_emb}
If $(X,d)$ is a metric space with finite Assouad dimension, then, for every $0<\varepsilon<1$, there exists a bi-Lipschitz embedding of $(X,d^\varepsilon)$ into $\R^n$ for some $n$ depending only on $\dim_AX$ and $\varepsilon$.
\end{theorem}
Let us also mention that, if $1/2<\varepsilon<1$, the parameter $n$ in Assouad embedding theorem can be chosen independently from $\varepsilon$ (\cite{NaoNei}, see also \cite{DavSni} for an explicit map construction). We address the interested reader to the monographs \cite{Rob,Fra} for more details and references. A positive answer to Question \ref{q:dim_A_GH_finite} could then motivate the search for computable bi-Lipschitz embeddings of the $\varepsilon$-snowflaking of $\mathcal{GH}^{\leq n}$ or $\mathcal{GH}^{\leq n}_{\leq R}$ into some finite-dimensional Hilbert space, further tightening the connection between computational topology and dimension theory.

\Addresses
 
\end{document}